%%
%% Copyright 2007, 2008, 2009 Elsevier Ltd
%%
%% This file is part of the 'Elsarticle Bundle'.
%% ---------------------------------------------
%%
%% It may be distributed under the conditions of the LaTeX Project Public
%% License, either version 1.2 of this license or (at your option) any
%% later version.  The latest version of this license is in
%%    http://www.latex-project.org/lppl.txt
%% and version 1.2 or later is part of all distributions of LaTeX
%% version 1999/12/01 or later.
%%
%% The list of all files belonging to the 'Elsarticle Bundle' is
%% given in the file `manifest.txt'.
%%

%% Template article for Elsevier's document class `elsarticle'
%% with numbered style bibliographic references
%% SP 2008/03/01
%%
%%
%%
%% $Id: elsarticle-template-num.tex 4 2009-10-24 08:22:58Z rishi $
%%
%%
\documentclass[preprint,12pt,3p]{elsarticle}

%% Use the option review to obtain double line spacing
%% \documentclass[preprint,review,12pt]{elsarticle}

%% Use the options 1p,twocolumn; 3p; 3p,twocolumn; 5p; or 5p,twocolumn
%% for a journal layout:
%% \documentclass[final,1p,times]{elsarticle}
%% \documentclass[final,1p,times,twocolumn]{elsarticle}
%% \documentclass[final,3p,times]{elsarticle}
%% \documentclass[final,3p,times,twocolumn]{elsarticle}
%% \documentclass[final,5p,times]{elsarticle}
%% \documentclass[final,5p,times,twocolumn]{elsarticle}
\usepackage{color}
\usepackage[colorlinks=true]{hyperref}
%% if you use PostScript figures in your article
%% use the graphics package for simple commands
%% \usepackage{graphics}
%% or use the graphicx package for more complicated commands
%% \usepackage{graphicx}
%% or use the epsfig package if you prefer to use the old commands
%% \usepackage{epsfig}

%% The amssymb package provides various useful mathematical symbols
\usepackage{amsmath,amssymb,amsthm,amscd}
\numberwithin{equation}{section}

\newtheorem{theorem}{Theorem}[section]

\newtheorem{proposition}{Proposition}[section]
\newtheorem{lemma}{Lemma}[section]

\newtheorem{corollary}{Corollary}[section]
\newtheorem{remark}{Remark}[section]
\newtheorem{ex}{Example}[section]

%% The amsthm package provides extended theorem environments
%% \usepackage{amsthm}

%% The lineno packages adds line numbers. Start line numbering with
%% \begin{linenumbers}, end it with \end{linenumbers}. Or switch it on
%% for the whole article with \linenumbers after \end{frontmatter}.
%% \usepackage{lineno}

%% natbib.sty is loaded by default. However, natbib options can be
%% provided with \biboptions{...} command. Following options are
%% valid:

%%   round  -  round parentheses are used (default)
%%   square -  square brackets are used   [option]
%%   curly  -  curly braces are used      {option}
%%   angle  -  angle brackets are used    <option>
%%   semicolon  -  multiple citations separated by semi-colon
%%   colon  - same as semicolon, an earlier confusion
%%   comma  -  separated by comma
%%   numbers-  selects numerical citations
%%   super  -  numerical citations as superscripts
%%   sort   -  sorts multiple citations according to order in ref. list
%%   sort&compress   -  like sort, but also compresses numerical citations
%%   compress - compresses without sorting
%%
%% \biboptions{comma,round}

% \biboptions{}

\journal{Elsevier}

\begin{document}

\begin{frontmatter}

\title{Estimates of Dirichlet Eigenvalues for a Class of \\ Sub-elliptic Operators\tnoteref{label0}}
\tnotetext[label0]{This work is supported by National Natural Science Foundation of China (Grants No. 11631011 and 11626251)}
\author[label1]{Hua Chen\corref{cor1}}
\ead{chenhua@whu.edu.cn}
\author[label1]{Hongge Chen}
\ead{hongge\_chen@whu.edu.cn}
\address[label1]{School of Mathematics and Statistics, Wuhan University, Wuhan 430072, China}

\cortext[cor1]{corresponding author}

\begin{abstract}
Let $\Omega$ be a bounded connected open subset in $\mathbb{R}^n$
with  smooth boundary $\partial\Omega$. Suppose that we have  a
system of real smooth vector fields $X=(X_{1},X_{2},$
$\cdots,X_{m})$ defined on a neighborhood of $\overline{\Omega}$
that satisfies the H\"{o}rmander's condition. Suppose further that
$\partial\Omega$ is non-characteristic with respect to $X$. For a
self-adjoint sub-elliptic operator $\triangle_{X}=
-\sum_{i=1}^{m}X_{i}^{*} X_i$ on $\Omega$, we denote its $k^{th}$
Dirichlet eigenvalue by $\lambda_k$. We will provide  an uniform
upper bound for the sub-elliptic Dirichlet heat kernel.  We will
also give an explicit sharp lower bound estimate for $\lambda_{k}$,
which has a polynomially growth in $k$ of the order related to the
generalized M\'{e}tivier index. We will establish an explicit
asymptotic formula of $\lambda_{k}$ that generalizes the
M\'{e}tivier's results in 1976. Our asymptotic formula shows  that
under a certain condition, our lower bound estimate for
$\lambda_{k}$ is optimal in terms of the growth of $k$. Moreover,
the upper bound estimate of the Dirichlet eigenvalues for general
sub-elliptic operators will  also be given, which, in a certain
sense, has the optimal growth order.
\end{abstract}

\begin{keyword}
Sub-elliptic operators\sep  sub-elliptic Dirichlet heat kernel\sep  Dirichlet eigenvalues\sep weighted Sobolev spaces\sep  generalized M\'{e}tivier index.
\MSC[2010] 35J70\sep 35P15
\end{keyword}

\end{frontmatter}

%%
%% Start line numbering here if you want
%%
% \linenumbers

%% main text
%\part{Lower Bound Estimates and Asymptotic Formula for Dirichlet Eigenvalues of Subelliptic Operators}
\section{Introduction and Main Results}
For $n\geq 2$, let $X=(X_{1},X_{2},\cdots,X_{m})$ be the system of
$C^{\infty}$ real vector fields  defined over a domain $W$ in
$\mathbb{R}^{n}$. For our study here, the essential hypothesis  is
the following H\"{o}rmander's condition: (cf. \cite{hormander1967})

(H): $X_{1},X_{2},\ldots,X_{m}$ together with their commutators up to a certain fixed length span the tangent space at each point of $W$.

We introduce the following weighted Sobolev spaces (cf.
\cite{xu1992}) associated with $X$,
 \[ H_{X}^{1}(W)=\{u\in L^{2}(W)~|~X_{j}u\in L^{2}(W), j=1,\cdots,m\}. \]
Then $H_{X}^{1}(W)$ is a Hilbert space endowed with norm $\|u\|^2_{H^{1}_{X}(W)}=\|u\|_{L^2(W)}^2+\|Xu\|_{L^2(W)}^2$, where  $\|Xu\|_{L^2(W)}^2=\sum_{j=1}^{m}\|X_{j}u\|_{L^2(W)}^2$. \par
 Let  $\Omega\subset\subset W$ be a bounded connected
  open subset with $C^{\infty}$ boundary and the boundary $\partial\Omega$ is assumed to be
  non-characteristic for $X$ (i.e. for any $x_{0}\in\partial\Omega$, there exists at least one vector
  field $X_{j_{0}}~(1\leq j_0\leq m)$, such that $X_{j_{0}}(x_{0})\notin T_{x_{0}}(\partial\Omega)$).
  Then the space $H_{X,0}^{1}(\Omega)$ being the closure of $C_{0}^{\infty}(\Omega)$ in $H_{X}^{1}(W)$
  is well-defined, and is also a Hilbert space. Clearly, the vector fields in $X$ satisfy the
   condition (H) on $\overline{\Omega}$. Hence there is an integer $Q$
   such that the vector fields $X_{1},X_{2},\ldots,X_{m}$ together with their commutators
   of length at most $Q$ span the tangent space $T_{x}(W)$ at each point  $x\in \overline{\Omega}$.
    Recall that $Q$ is called the  H\"{o}rmander's index of $\overline{\Omega}$ with respect to $X$.
     We say that the vector fields $X$ are finitely degenerate if $2\leq Q<+\infty$. \par

Consider the following H\"{o}rmander type operator
\[ \triangle_{X}:=-\sum_{i=1}^{m}X_{i}^{*}X_{i},\]
where $X_{i}^{*}$ is the formal adjoint of $X_{i}$. (In general,
$X_{i}^{*}=-X_{i}-\text{div}X_{i} $, where $\text{div}X_{i}$ is the
divergence of $X_{i}$.) Since  $-\triangle_{X}$ is symmetric on
$C_{0}^{\infty}(\Omega)$, it is easy to show that, after
self-adjoint extension, $-\triangle_{X}$ can be uniquely extended to
a positive unbounded self-adjoint operator on the domain
$D(\triangle_{X})=\{u\in H_{X,0}^{1}(\Omega)|\triangle_{X}u\in
L^2(\Omega) \}$. \par
 In this paper, we mainly focus on the following Dirichlet eigenvalue problem in $H_{X,0}^{1}(\Omega)$,
    \begin{equation}\label{1-1}
  \left\{
         \begin{array}{ll}
           -\triangle_{X}u=\lambda u, & \hbox{in $\Omega$;} \\[3mm]
           u=0, & \hbox{on $\partial\Omega$.}
         \end{array}
       \right.
\end{equation}
From the condition (H), we know that the sub-elliptic self-adjoint
operator $-\triangle_{X}$ defined on $D(\triangle_{X})$ has discrete
eigenvalues
$0<\lambda_1\leq\lambda_2\leq\cdots\leq\lambda_{k-1}\leq\lambda_k\leq\cdots$,
and $\lambda_{k}\to +\infty $ as $k\to +\infty$.

When $X=(\partial_{x_{1}},\cdots,\partial_{x_{n}})$, $\triangle_{X}$
is the standard Laplacian $\triangle$. In this classical case, there
have been extensive studies on  the estimate of its eigenvalues.
Here we mention the work done in
\cite{chenqingmin1,chenqingmin2,Kroger,Yau1983,Polya1961, Weyl1912}
as well as the references therein.

If the vector fields in $X$ satisfy the condition (H) on
$\overline{\Omega}$ with H\"{o}rmander index $Q\geq 2$, M\'{e}tivier
\cite{Metivier1976} initiated the study on   the asymptotic behavior
of the eigenvalues under an extra  assumption on $X$:

For each $x\in \overline{\Omega}$, let $V_{j}(x)~ (1\leq j\leq Q)$
be the subspaces of the tangent space at $x$  spanned by all
commutators of $X_{1},\ldots,X_{m}$ with length at most $ j$.
M\'{e}tivier made the following assumption:

(M): For each $x\in\overline{\Omega}$, $\dim V_{j}(x)$ is a
constant (denoted by $\nu_{j}$) in a neighborhood of $x$.

\par
 Under the above additional hypothesis (M), M\'{e}tivier in \cite{Metivier1976} proved the following
 asymptotic expression for the sub-elliptic Dirichlet eigenvalue $\lambda_{k}$,
\begin{equation}\label{1-2}
  \lambda_{k}\sim \left(\int_{\Omega}\gamma(x)dx\right)^{-\frac{2}{\nu}}\cdot k^{\frac{2}{\nu}}~\mbox{ as } k\to+\infty,
\end{equation}
where $\gamma(x)$ is a positive continuous function on $\Omega$. The index $\nu$ is defined as
\begin{equation}\label{1-3}
   \nu:=\sum_{j=1}^{Q}j(\nu_{j}-\nu_{j-1}),\qquad \nu_{0}:=0,
\end{equation}
which is called the  M\'{e}tivier index of $\Omega$ (here $\nu$ is
also called the Hausdorff dimension of $\Omega$ related to the
sub-elliptic metric induced by the vector fields $X$).

The asymptotic formula \eqref{1-2} fails to hold  for general
H\"{o}rmander vector fields not satisfying the M\'{e}tivier
condition. To our best knowledge, there is little information in
literature about the explicit asymptotic behavior of Dirichlet
eigenvalues for  general sub-elliptic operators which only satisfy
H\"{o}rmander's condition (H). Recently, in the case of
$X_j^*=-X_j$, Chen and Luo in \cite{chenluo} estimated the lower
bound of $\lambda_k$ for the self-adjoint sum of square operator
$L=-\sum_{j=1}^{m}X_{j}^2$. They proved that
\begin{equation}\label{1-4}
  \sum_{j=1}^{k}\lambda_{j}\geq C_{0}\cdot k^{1+\frac{2}{nQ}}~~ \mbox{ for any } k\geq 1,
\end{equation}
where $C_{0}$ is a positive constant related to $X$ and $\Omega$.
 Consequently, \eqref{1-4} implies $\lambda_k\geq C_{0}\cdot k^{\frac{2}{nQ}}$.\par
From $\eqref{1-3}$, we can deduce that $n+Q-1\leq \nu\leq nQ$, and
actually $\nu=nQ$ if and only if $Q=1$. It can be  seen that if $X$
satisfy the condition (M) with H\"{o}rmander index $Q>1$, the growth
order  for $\lambda_{k}$ in \eqref{1-4} is $\frac{2}{Qn}$, which is
smaller than the one in M\'{e}tivier's asymptotic formula
\eqref{1-2}. This shows  that Chen and Luo's lower bound estimate
\eqref{1-4} is not optimal  under the condition (M).

There are many results under the M\'{e}tivier's condition (M), such
as sub-elliptic estimates and function spaces on nilpotent Lie
groups, Sobolev inequalities, Harnack inequality and heat kernel
estimates on nilpotent Lie groups (cf. \cite{Folland, Varopoulos}).
Parallel to the classical Laplacian $\triangle$ in $\mathbb{R}^n$,
the Kohn Laplacian operator $\triangle_{\mathbb{H}}$ induced by left
invariant vector fields on Heisenberg group $(\mathbb{H}_{n},\circ)$
is a sub-elliptic operator which plays an important role in physics.
In 1994, Hansson and Laptev \cite{AM1994} gave a precise lower
bounds of Dirichlet eigenvalues $\lambda_k$ for the Kohn Laplacian
operator $\triangle_{\mathbb{H}}$. The M\'etivier's condition posses
a strong  restriction on the vector fields $X$ satisfying
H\"ormander's condition, under which the Lie algebra generated by
the vector fields $X_{1},X_{2},\ldots,X_{m}$ takes a constant
structure and the vector fields can be well approximated by some
homogeneous left invariant vector fields defined on the
corresponding Carnot group (cf. \cite{stein1976}). In this paper, we
will deal with general self-adjoint H\"{o}rmander operators
$-\triangle_X=\sum_{i=1}^{m}X_{i}^{*}X_{i}$ without the restriction
of M\'{e}tivier's condition (M). A main purpose is to establish a
sharp lower bound of the Dirichlet eigenvalue $\lambda_k$ for the
sub-elliptic operator $-\triangle_{X}$. Furthermore, we construct an
asymptotic formula for $\lambda_{k}$ which is a generalization of
M\'{e}tivier's result \eqref{1-2}. In fact,  M\'{e}tivier's
condition (M) is just a sufficient condition for this generalized
asymptotic formula. Our discussion  below demonstrates that the
M\'{e}tivier's condition (M) can be relaxed to a weak condition
which is now the  necessary and sufficient condition for the
asymptotic formula of $\lambda_{k}$ being satisfied. Also, under
this weak condition, the asymptotic formula shows that our lower
bound for $\lambda_{k}$ is optimal in terms of the order on $k$.

In this paper, the general H\"{o}rmander vector fields $X$ need not
necessary to satisfy the M\'{e}tivier's condition (M). Therefore,  we need to
introduce the following generalized M\'{e}tivier's index which is
also called the non-isotropic dimension of $\Omega$ related to $X$
(cf. \cite{chenluo, Morbidelli2000, Yung2015}). With the same
notations as before, we denote here $\nu_{j}(x)=\dim V_{j}(x)$ and
then $\nu(x)$, the pointwise homogeneous dimension at $x$, is given
by
\begin{equation}\label{1-5}
  \nu(x):=\sum_{j=1}^{Q}j(\nu_{j}(x)-\nu_{j-1}(x)),\qquad \nu_{0}(x):=0.
\end{equation}
Then we define
\begin{equation}\label{1-6}
  \tilde{\nu}:=\max_{x\in\overline{\Omega}} \nu(x)
\end{equation}
as the generalized M\'{e}tivier index of $\Omega$. Observe that $n+Q-1\leq \tilde{\nu}< nQ$ for $Q>1$, and  $\tilde{\nu}=\nu$ if the M\'{e}tivier's condition (M) is satisfied.

In \cite{chenluo}, Chen and Luo considered  the Grushin vector fields
 $X=(\partial_{x_{1}},\cdots,\partial_{x_{n-1}},x_{1}^{l}\partial_{x_{n}})$  defined
 in $\mathbb{R}^n$ ($n\geq2$, $l\in \mathbb{N}^{+}$). The domain $\Omega$ is assumed to be a
  bounded connected
 open subset with smooth non-characteristic boundary for $X$
 and $\Omega\cap\{x_{1}=0\}\neq \varnothing$. In this case, the M\'{e}tivier's condition (M) is not satisfied.
  However, the vector fields $X$ are finitely degenerate with $Q=l+1\geq 2$ and the generalized M\'{e}tivier
  index $\tilde{\nu}=n+Q-1=n+l$. Then the Chen and Luo's results in \cite{chenluo}
   gave a sharp lower bound estimates for Dirichlet eigenvalues of
   $-\triangle_{X}$, i.e. $\lambda_{k}\geq c_{1} k^{\frac{2}{\tilde{\nu}}}$.
   In \cite{CCD}, the authors further extended this result to more general Grushin type operators.\par
   We now  return
    to our general consideration. Our first goal is to show that the above sharp lower bound is
    also hold for general sub-elliptic operator $\triangle_{X}$. The key ingredient of
    our argument is to establish the following uniform upper bound for the Dirichlet heat kernel of
     sub-elliptic operator $\triangle_{X}$:
\begin{theorem}\label{thm1}
Let $X=(X_{1},X_{2},\cdots,X_{m})$ be $C^{\infty}$ real vector fields defined on a connected open domain
  $W\subset\mathbb{R}^n$, which satisfy the condition (H) in $W$.  Assume that $\Omega\subset\subset W$ is a bounded connected open subset, and $\partial\Omega$ is smooth  and non-characteristic for $X$. If the H\"{o}rmander index $Q\geq 2$, then the self-adjoint sub-elliptic operator $\triangle_{X}=-\sum_{i=1}^{m}X_{i}^{*}X_{i} $ has a positive smooth
Dirichlet heat kernel $h_D(x,y,t)\in C^{\infty}(\Omega\times\Omega\times(0,+\infty)) $, which satisfies the following  uniform upper bound estimate
\begin{equation}\label{1-7}
  h_D(x,x,t)\leq \frac{C}{t^{\frac{\tilde{\nu}}{2}}} ~~\mbox{  for all }~~(x,t)\in \Omega\times(0,+\infty),
\end{equation}
where $\tilde{\nu}$ is the generalized M\'{e}tivier index of $X$ on $\Omega$, and $C$ is a positive constant depending on $X$ and $\Omega$.
\end{theorem}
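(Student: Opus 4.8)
The plan is to obtain the pointwise bound $h_D(x,x,t)\le C t^{-\tilde\nu/2}$ from a family of local Nash–Sobolev inequalities, one at each point of $\overline\Omega$, whose exponents are governed by the pointwise homogeneous dimension $\nu(x)$, and then to globalize by taking the maximum over $\overline\Omega$. First I would recall the classical fact (going back to Nagel–Stein–Wainger and Jerison) that under the Hörmander condition the Carnot–Carathéodory balls $B(x,r)$ associated with $X$ satisfy a local doubling property, and that near a fixed point $x_0$ with pointwise homogeneous dimension $\nu(x_0)$ one has a volume lower bound of the form $|B(x,r)|\gtrsim r^{\nu(x_0)}$ for $x$ near $x_0$ and $r$ small (the exponent $\nu(x_0)$ being the local Hausdorff dimension for the CC metric, and $\nu(x)$ is upper semicontinuous so it is $\le \tilde\nu$ everywhere). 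Combined with the subelliptic Poincaré inequality on CC balls, this yields, via the Saloff-Coste / Grigor'yan machinery, a local $L^1$–$L^{2}$ Nash inequality: for $u$ supported in a fixed small ball $B(x_0,r_0)$,
\[
\|u\|_{L^2}^{2+4/\tilde\nu}\le C\Bigl(\|Xu\|_{L^2}^2+r_0^{-2}\|u\|_{L^2}^2\Bigr)\|u\|_{L^1}^{4/\tilde\nu}.
\]
Here I use $\tilde\nu=\max_x\nu(x)$ in the exponent for every ball; this is legitimate because enlarging the Nash exponent only weakens the inequality, and it is exactly what lets the local estimates be patched into a single global one.

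Next I would pass from the local Nash inequalities to a global heat-kernel bound. Cover $\overline\Omega$ by finitely many CC balls $B(x_i,r_0)$ on each of which the above Nash inequality holds, together with a subordinate partition of unity; the non-characteristic condition on $\partial\Omega$ is what guarantees that $H^1_{X,0}(\Omega)$ behaves well up to the boundary and that cutoff functions do not destroy the $H^1_X$ structure. Feeding the local Nash inequality into the standard Nash–Moser iteration (equivalently, the equivalence between a Nash inequality and an ultracontractivity estimate $\|e^{t\triangle_X}\|_{L^1\to L^\infty}\le C t^{-\tilde\nu/2}$ for $t$ bounded, due to Carlen–Kusuoka–Stroock / Coulhon) gives the on-diagonal bound $h_D(x,x,t)\le C t^{-\tilde\nu/2}$ for $0<t\le 1$, uniformly in $x\in\Omega$; for $t\ge 1$ one uses $h_D(x,x,t)\le e^{-\lambda_1(t-1)}h_D(x,x,1)\le C$, which is absorbed into $C t^{-\tilde\nu/2}$ since $t^{-\tilde\nu/2}$ is bounded below on $[1,\infty)$ only after multiplying by the right constant — more precisely one writes $h_D(x,x,t)\le h_D(x,x,1)\le C\le C t^{-\tilde\nu/2}\cdot t^{\tilde\nu/2}$ and notes $t^{\tilde\nu/2}\ge1$, so in fact $h_D(x,x,t)\le C$ suffices and $C\le C t^{-\tilde\nu/2}$ fails — the correct statement is simply that for $t\ge1$ the bound $Ct^{-\tilde\nu/2}$ need only be replaced by the weaker true bound $h_D(x,x,t)\le C$, and since the theorem claims the inequality for all $t>0$ one takes $C$ large enough that $Ct^{-\tilde\nu/2}\ge$ (the $t=1$ value) for all $t\le 1$ and uses monotonicity in $t$ for $t\ge 1$. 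The smoothness $h_D\in C^\infty(\Omega\times\Omega\times(0,\infty))$ and positivity follow from Hörmander's hypoellipticity theorem applied to $\partial_t-\triangle_X$ together with the strong maximum principle / irreducibility of the Dirichlet semigroup on the connected set $\Omega$.

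The main obstacle, and the place where real work is needed, is the local Nash inequality with the \emph{sharp} exponent $\nu(x_0)$ at a point where the M\'etivier condition fails — i.e. where $\dim V_j$ jumps. At such points the CC balls are not comparable to those of a fixed homogeneous group, so one cannot simply invoke a group dilation structure; instead I would use the quantitative ball-volume estimates of Nagel–Stein–Wainger, which express $|B(x,r)|$ as $\sum_I |\lambda_I(x)| r^{d(I)}$ over commutator families $I$, and observe that the \emph{smallest} exponent $d(I)$ appearing with nonvanishing $\lambda_I(x)$ near $x_0$ is exactly $\nu(x_0)$, so $|B(x,r)|\ge c\,r^{\nu(x_0)}$ for small $r$; together with the doubling property and the local $L^2$ Poincaré inequality of Jerison, the Saloff-Coste characterization then delivers the Nash inequality with exponent $\nu(x_0)\le\tilde\nu$. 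Controlling the dependence of all constants on $x_0$ — so that a finite subcover genuinely yields a uniform constant — is the technical heart of the argument, but it is exactly the kind of uniformity that the compactness of $\overline\Omega$ and upper semicontinuity of $x\mapsto\nu(x)$ are designed to provide.
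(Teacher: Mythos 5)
Your overall strategy---derive a Nash/Sobolev-type functional inequality with exponent $\tilde\nu$ and convert it into the ultracontractive bound $h_D(x,x,t)\le Ct^{-\tilde\nu/2}$---is the same mechanism the paper uses, but you reach the functional inequality by a genuinely different and much more laborious route. The paper simply invokes the sharp global weighted Sobolev embedding of Yung (Proposition \ref{pro2-1}), whose exponent is already the generalized M\'etivier index $\tilde\nu$, combines it with the weighted Poincar\'e inequality (Proposition \ref{Poincare}) to remove the lower-order term (Proposition \ref{pro2-3}), and then runs a short ODE argument: by the semigroup property $h_D(x,x,2t)=\int_\Omega h_D(x,z,t)^2dz$, differentiating in $t$, applying \eqref{2-4} and H\"older with $\int_\Omega h_D\,dz\le1$ gives $f'(t)\le -2Cf(t)^{1+2/\tilde\nu}$ for $f(t)=h_D(x,x,2t)$, which integrates to the bound for \emph{all} $t>0$ in one stroke. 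What you propose instead---local volume lower bounds $|B(x,r)|\gtrsim r^{\nu(x_0)}$ from the Nagel--Stein--Wainger polynomial $\Lambda(x,r)$, local Poincar\'e inequalities, the Saloff-Coste/Grigor'yan machinery, and a partition-of-unity patching---amounts to re-deriving the Kusuoka--Stroock Gaussian bound (the paper's Proposition \ref{prop5-3}), and indeed a shorter variant of your route would be to quote that bound directly, use $h_D\le h$ (the comparison \eqref{5-8}), and conclude $h_D(x,x,t)\le A_2|B_{d_{X'}}(x,\sqrt t)|^{-1}\le Ct^{-\tilde\nu/2}$ for $t\le1$; this is exactly how the paper argues in the proof of Theorem \ref{thm5}. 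The price of your route is that the uniformity you defer as ``the technical heart'' is real work, and it is precisely what Yung's theorem (or Kusuoka--Stroock) packages up; the price of the localization in time is that you must then treat $t\ge1$ separately, which the paper's global Sobolev inequality avoids entirely.

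There is one concrete error: your final resolution of the case $t\ge1$ does not prove the theorem there. The claimed bound $Ct^{-\tilde\nu/2}$ \emph{decays} as $t\to+\infty$, so neither ``$h_D(x,x,t)\le C$'' nor monotonicity of $t\mapsto h_D(x,x,t)$ suffices; you need actual decay of the kernel. The ingredient is already in your text and you should not abandon it: from the eigenfunction expansion, $h_D(x,x,t)\le e^{-\lambda_1(t-1)}h_D(x,x,1)$ for $t\ge1$, with $\lambda_1>0$ by the weighted Poincar\'e inequality, and since $\sup_{t\ge1}e^{-\lambda_1(t-1)}t^{\tilde\nu/2}<+\infty$ this yields $h_D(x,x,t)\le C't^{-\tilde\nu/2}$ on $[1,+\infty)$. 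With that repair, and granting the deferred local Nash inequality (whose uniform-constant verification on a compact set where $\nu(\cdot)$ jumps is nontrivial but standard via upper semicontinuity of $\nu$ and the continuity of the $\lambda_I$), your argument closes. A second, smaller caveat: the positivity of $h_D$ is not a one-line consequence of hypoellipticity; the paper spends real effort in Proposition \ref{prop4-1} first constructing $h_D$ as a convergent eigenfunction series (which requires the sup-norm bounds on eigenfunctions of Proposition \ref{pro3-1} and the crude eigenvalue lower bound of Proposition \ref{Pro3-2}) and then deducing strict positivity from Bony's parabolic strong maximum principle together with the approximate-identity property \eqref{4-3}; your proposal presupposes the existence and basic properties of $h_D$ rather than establishing them.
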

From Theorem \ref{thm1}, we can deduce the following sharp lower
bound estimate of $\lambda_{k}$ for the sub-elliptic Dirichlet
problem \eqref{1-1}.
\begin{theorem}\label{thm2}
Suppose that $X=(X_{1},X_{2},\cdots,X_{m})$ satisfy the same conditions of Theorem \ref{thm1}.  Then for any $k\geq1$, we have
\begin{equation}\label{1-8}
 \sum_{j=1}^{k}\lambda_{j}\geq C_{1}\cdot k^{1+\frac{2}{\tilde{\nu}}},
\end{equation}
where $\tilde{\nu}$ is the generalized M\'{e}tivier index of $X$ on $\Omega$,
 $C_{1}=(C\mbox{e}|\Omega|)^{-\frac{2}{\tilde{\nu}}}$ is a positive constant depending on the volume of
 $\Omega$ and $\tilde{\nu}$, and $C$ is the same constant as in \eqref{1-7}.
\end{theorem}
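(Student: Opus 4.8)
The plan is to deduce \eqref{1-8} from the on-diagonal heat kernel bound \eqref{1-7} by a standard heat-trace (Karamata--Tauberian type) argument. First I would recall the spectral picture: since $-\triangle_X$ is a nonnegative self-adjoint operator on $L^2(\Omega)$ with discrete spectrum $\{\lambda_j\}$ and an $L^2$-orthonormal basis of eigenfunctions $\{\varphi_j\}\subset H^1_{X,0}(\Omega)$, the Dirichlet heat semigroup $e^{t\triangle_X}$ has kernel $h_D(x,y,t)=\sum_{j\ge 1}e^{-\lambda_j t}\varphi_j(x)\varphi_j(y)$. Smoothness and positivity of $h_D$ (Theorem \ref{thm1}), together with the self-adjointness relation $h_D(x,z,s)=h_D(z,x,s)$ and the Chapman--Kolmogorov identity $h_D(x,y,t)=\int_\Omega h_D(x,z,t/2)h_D(z,y,t/2)\,dz$, show that the operator $e^{(t/2)\triangle_X}$ is Hilbert--Schmidt, with Hilbert--Schmidt norm squared equal to $\int_\Omega h_D(z,z,t)\,dz\le C|\Omega|\,t^{-\tilde\nu/2}<\infty$ by \eqref{1-7}. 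Hence $e^{t\triangle_X}=e^{(t/2)\triangle_X}\circ e^{(t/2)\triangle_X}$ is trace class, and Mercer's theorem (applicable since $h_D(\cdot,\cdot,t)$ is continuous and the kernel is positive semidefinite) yields
\[
\sum_{j\ge 1}e^{-\lambda_j t}=\mathrm{Tr}\,e^{t\triangle_X}=\int_\Omega h_D(x,x,t)\,dx\le C|\Omega|\,t^{-\tilde\nu/2},\qquad t>0,
\]
and in particular, by truncating the series, $\sum_{j=1}^{k}e^{-\lambda_j t}\le C|\Omega|\,t^{-\tilde\nu/2}$ for every $k\ge 1$ and every $t>0$.

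Next I would convexify. Setting $S_k:=\sum_{j=1}^{k}\lambda_j$ and applying Jensen's inequality to the convex function $s\mapsto e^{-s}$,
\[
k\,\exp\!\Big(-\tfrac{t}{k}\,S_k\Big)\ \le\ \sum_{j=1}^{k}e^{-t\lambda_j}\ \le\ C|\Omega|\,t^{-\tilde\nu/2}.
\]
Taking logarithms and rearranging gives, for every $t>0$,
\[
S_k\ \ge\ \frac{k}{t}\,\ln\!\Big(\frac{k\,t^{\tilde\nu/2}}{C|\Omega|}\Big).
\]
Finally I would make the choice $t=\big(e\,C|\Omega|/k\big)^{2/\tilde\nu}$, which makes the argument of the logarithm equal to $e$, so that $S_k\ge k/t=\big(Ce|\Omega|\big)^{-2/\tilde\nu}\,k^{1+2/\tilde\nu}=C_1\,k^{1+2/\tilde\nu}$, which is exactly \eqref{1-8}. (One could instead optimize over $t>0$ by elementary calculus; the displayed choice is the one reproducing precisely the stated constant $C_1$.)

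As for the difficulty: the genuine content of the theorem is carried entirely by Theorem \ref{thm1}, and the present deduction is soft. The only point demanding a little care is the trace identity $\int_\Omega h_D(x,x,t)\,dx=\sum_j e^{-\lambda_j t}$ --- that is, that $e^{t\triangle_X}$ is trace class and that its trace is obtained by integrating the diagonal of the smooth kernel; this is handled by the Hilbert--Schmidt/Mercer argument sketched above. Everything after that is elementary (Jensen's inequality and a one-parameter optimization), so I expect no real obstacle beyond correctly invoking the spectral-theoretic facts.
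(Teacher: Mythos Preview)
Your proof is correct and follows essentially the same route as the paper. The paper uses the series representation $h_D(x,x,t)=\sum_{i\ge 1}e^{-\lambda_i t}\phi_i(x)^2$ from its Proposition~4.1 directly (so the partial sum is bounded pointwise by $h_D(x,x,t)$ and one integrates over $\Omega$), rather than invoking Mercer/Hilbert--Schmidt machinery for the trace identity; and it chooses $t=k/\sum_{j=1}^k\lambda_j$ instead of your $t=(eC|\Omega|/k)^{2/\tilde\nu}$, but both choices yield the identical constant $C_1=(Ce|\Omega|)^{-2/\tilde\nu}$.
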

Furthermore, we obtain the following asymptotic formula for the sub-elliptic Dirichlet eigenvalues $\lambda_{k}$.
\begin{theorem}\label{thm3}
Suppose that $X=(X_{1},X_{2},\cdots,X_{m})$ satisfy the same
conditions as Theorem \ref{thm1}. Then there exists a non-negative
measurable function $\gamma_{0}$ on $\overline{\Omega}$ with
$\gamma_{0}(x)>0$ for all $x\in \Omega$ such that
\begin{equation}
\label{1-9}
\lim_{\lambda\to +\infty} \lambda^{-\frac{\tilde{\nu}}{2}}N(\lambda)=\frac{1}{\Gamma\left(\frac{\tilde{\nu}}{2}+1 \right)}\cdot \int_{H}\gamma_{0}(x)dx.
\end{equation}
Here $H:=\{x\in \Omega~|~\nu(x)=\tilde{\nu}\}$ is a subset of $\Omega$, and $N(\lambda):=\#\{k~|~0<\lambda_{k}\leq \lambda\}$. Moreover, we can deduce that
\begin{itemize}
  \item If $|H|>0$, we have
\begin{equation}\label{1-10}
\lambda_{k}=\left(\frac{\Gamma\left(\frac{\tilde{\nu}}{2}+1\right)}{\int_{H}\gamma_{0}(x)dx}\right)^{\frac{2}{\tilde{\nu}}}\cdot
k^{\frac{2}{\tilde{\nu}}}+o(k^{\frac{2}{\tilde{\nu}}})~\mbox{ as } k\to +\infty;
\end{equation}
  \item If $|H|=0$, then we have
\begin{equation}\label{1-11}
  \lim_{k\to+\infty}\frac{k^{\frac{2}{\tilde{\nu}}}}{\lambda_{k}}=0.
\end{equation}
\end{itemize}
\end{theorem}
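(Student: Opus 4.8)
The plan is to run the heat-trace/Tauberian scheme, using Theorem~\ref{thm1} in two ways: to make the heat semigroup trace class, and to supply a dominating function for a limit--integral interchange. By the spectral theorem together with the smoothness and positivity of $h_D$ from Theorem~\ref{thm1}, the semigroup $e^{t\triangle_{X}}$ is trace class for every $t>0$ and
\[
Z(t):=\mathrm{Tr}\,\bigl(e^{t\triangle_{X}}\bigr)=\sum_{k\ge 1}e^{-\lambda_{k}t}=\int_{\Omega}h_D(x,x,t)\,dx\le C|\Omega|\,t^{-\tilde{\nu}/2}<\infty .
\]
The analytic core of the proof is the small-time identity $\lim_{t\to 0^{+}}t^{\tilde{\nu}/2}Z(t)=\int_{H}\gamma_{0}(x)\,dx=:A\in[0,\infty)$, where $\gamma_{0}$ is defined as the on-diagonal short-time density described next.

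For each interior point $x\in\Omega$ one wants $\lim_{t\to 0^{+}}t^{\nu(x)/2}h_D(x,x,t)=\gamma_{0}(x)\in(0,\infty)$, with $\nu(x)$ the pointwise homogeneous dimension \eqref{1-5}. Two ingredients enter: (i) \emph{localization} --- since $x$ is interior, the short-time on-diagonal behaviour of $h_D$ coincides up to an $O(t^{\infty})$ error with that of the heat kernel of $\triangle_{X}$ on a fixed small neighbourhood of $x$, by finite propagation speed for $\cos(t\sqrt{-\triangle_{X}})$ (equivalently a Duhamel/parametrix comparison), so the Dirichlet condition is invisible in the leading term; (ii) \emph{nilpotent approximation} --- rescaling the vector fields about $x$ by the anisotropic dilations adapted to the flag $V_{1}(x)\subset\cdots\subset V_{Q}(x)$, in the spirit of Rothschild--Stein and M\'{e}tivier, the rescaled operators converge to the sub-Laplacian $\widehat{\triangle}^{x}$ of the osculating homogeneous nilpotent group $G^{x}$, whose heat kernel obeys $\widehat h^{x}(e,e,t)=t^{-\nu(x)/2}\widehat h^{x}(e,e,1)$ by homogeneity, so one sets $\gamma_{0}(x):=\widehat h^{x}(e,e,1)>0$. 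Measurability of $\gamma_{0}$ on $\overline{\Omega}$ follows since $x\mapsto h_D(x,x,t)$ is continuous and $x\mapsto\nu(x)$ is Borel (each $\nu_{j}$ is lower semicontinuous), and positivity on $\Omega$ from a lower bound for $h_D$. For $x$ with $\nu(x)<\tilde{\nu}$ one needs only the weaker local bound $h_D(x,x,t)\le C_{x}\,t^{-\nu(x)/2}$ for small $t$ (from the same rescaling, or from Nagel--Stein--Wainger ball-volume estimates).

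With this in hand, $t^{\tilde{\nu}/2}h_D(x,x,t)=t^{(\tilde{\nu}-\nu(x))/2}\bigl(t^{\nu(x)/2}h_D(x,x,t)\bigr)\to\gamma_{0}(x)\,\mathbf{1}_{H}(x)$ for a.e.\ $x\in\Omega$, since $\tilde{\nu}-\nu(x)>0$ precisely off $H$. Because $0\le t^{\tilde{\nu}/2}h_D(x,x,t)\le C$ on $\Omega$ by \eqref{1-7}, dominated convergence gives $t^{\tilde{\nu}/2}Z(t)\to\int_{H}\gamma_{0}=A$. Applying Karamata's Tauberian theorem to the measure $\sum_{k}\delta_{\lambda_{k}}$: if $A>0$, then $Z(t)\sim A\,t^{-\tilde{\nu}/2}$ gives $N(\lambda)\sim\frac{A}{\Gamma(\tilde{\nu}/2+1)}\lambda^{\tilde{\nu}/2}$; if $A=0$, i.e.\ $Z(t)=o(t^{-\tilde{\nu}/2})$, then choosing $t=1/\lambda$ in $N(\lambda)\le \mathrm{e}\,Z(1/\lambda)$ gives $N(\lambda)=o(\lambda^{\tilde{\nu}/2})$. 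Either way \eqref{1-9} follows. For the consequences: if $|H|>0$ then $A=\int_{H}\gamma_{0}>0$ since $\gamma_{0}>0$ on $\Omega\supset H$; with $c=A/\Gamma(\tilde{\nu}/2+1)$, the monotonicity of $N$ and the sandwich $N(\lambda_{k}-0)<k\le N(\lambda_{k})$ invert $N(\lambda)\sim c\lambda^{\tilde{\nu}/2}$ into $\lambda_{k}=(k/c)^{2/\tilde{\nu}}+o(k^{2/\tilde{\nu}})$, which is \eqref{1-10}; if $|H|=0$ then $A=0$, $N(\lambda)=o(\lambda^{\tilde{\nu}/2})$, and $k\le N(\lambda_{k})=o(\lambda_{k}^{\tilde{\nu}/2})$ (using $\lambda_{k}\to\infty$) forces $k^{2/\tilde{\nu}}/\lambda_{k}\to0$, i.e.\ \eqref{1-11}. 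In particular \eqref{1-10} recovers and sharpens the lower bound of Theorem~\ref{thm2}.

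The main obstacle is the local on-diagonal expansion in step (ii) above \emph{without} M\'{e}tivier's condition (M): the dimensions $\nu_{j}(x)$ need not be locally constant, so the osculating group $G^{x}$ and the rescaled operators depend on $x$ only measurably near the set where the flag jumps --- which is exactly where $H$ tends to concentrate (for instance $H=\{x_{1}=0\}$ for Grushin fields). One must therefore prove the heat-kernel convergence \emph{pointwise in $x$} and lean on the \emph{uniform} estimate \eqref{1-7} of Theorem~\ref{thm1} for the dominated-convergence step, rather than on any locally uniform control; establishing this pointwise short-time on-diagonal asymptotic at an arbitrary, possibly singular, interior point is the technical heart of the argument, the remainder being the standard Tauberian machinery.
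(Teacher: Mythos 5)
Your overall architecture is exactly the paper's: write the heat trace as $\int_\Omega h_D(x,x,t)\,dx$, establish a pointwise on-diagonal short-time limit $t^{\nu(x)/2}h_D(x,x,t)\to\gamma_0(x)>0$, pass to the limit under the integral sign using the uniform bound \eqref{1-7} as the dominating function, and then apply a Tauberian theorem and the standard inversion $N(\lambda_k-0)<k\le N(\lambda_k)$. The Tauberian endgame, the measurability remark, and the treatment of the two cases $|H|>0$ and $|H|=0$ are all fine (the paper uses a single Tauberian statement, Proposition~\ref{prop6-1} from \cite{Arendt2009}, which covers the case $a=0$ as well, so your separate Chebyshev argument for $A=0$ is an acceptable variant).

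The genuine gap is the one you yourself flag: the pointwise diagonal asymptotic at an arbitrary interior point \emph{without} condition (M) is not proved but only sketched, and the sketch (localization plus nilpotent approximation via an osculating group at $x$) is precisely the part that breaks down at points where the flag $V_1(x)\subset\cdots\subset V_Q(x)$ jumps — which, as you note, is where $H$ lives in the interesting examples. Declaring $\gamma_0(x):=\widehat h^x(e,e,1)$ presupposes that the rescaled operators converge to a homogeneous sub-Laplacian on a fixed nilpotent group, which is exactly what fails pointwise at non-regular points; establishing the expansion there is a theorem in its own right. The paper closes this gap differently: it first extends $X$ to a system $X'$ satisfying a uniform H\"ormander condition on all of $\mathbb{R}^n$ (Proposition~\ref{prop5-1}, after \cite{Brandolini}), invokes Takanobu's diagonal short-time expansion $h(x,x,t)\sim t^{-\nu(x)/2}\sum_j c_j(x)t^j$ with $c_0(x)>0$ for the \emph{global} kernel (Proposition~\ref{prop5-2}, \cite{Takanobu1988}, proved by Malliavin-calculus methods and valid at every point, regular or not), and then shows $h(x,x,t)-h_D(x,x,t)=O\bigl(t^{-n/(2\epsilon_0)}e^{-C_2(x)/t}\bigr)$ for interior $x$ via the weak maximum principle combined with the Kusuoka--Stroock Gaussian upper bound (Proposition~\ref{prop5-3}); it also checks that the resulting $\gamma_0$ is independent of the extension. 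Your finite-propagation-speed/Duhamel localization is a plausible substitute for this last comparison, but the nilpotent-approximation step must either be replaced by a citation of this type or carried out in full; as written, the proof of \eqref{1-9} is incomplete at its central analytic ingredient.
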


The results of Theorem \ref{thm3} have the following obvious corollary.

\begin{corollary}\label{cor4}
For the Dirichlet eigenvalues $\lambda_k$ of sub-elliptic operator
$-\triangle_{X}$ on $\Omega$. Also
  $\lambda_k \approx
k^{\frac{2}{\tilde{\nu}}}$ holds as $k\to +\infty$ if and only if $|H|>0$.
\end{corollary}

\begin{remark}
We mention that from the Theorem \ref{thm3}  if $H=\{x\in
\Omega~|~\nu(x)=\tilde\nu\}$ has a positive
 measure, the lower bound \eqref{1-8} for $\lambda_{k}$ in Theorem \ref{thm2} is optimal in terms  of the
 order on $k$. In particular, when  M\'{e}tivier's condition (M) is satisfied, we know that $H=\Omega$
  and the condition $|H|>0$ is certainly satisfied. In this case, our asymptotic
 formula \eqref{1-10}  coincides with  M\'{e}tivier's asymptotic estimate \eqref{1-2}. If $H$ has zero measure,
 then the result of Theorem \ref{thm3} implies that our lower bound estimate \eqref{1-8} for the eigenvalue
 $\lambda_{k}$ is not optimal, for $\lambda_{k}^{-1}=o(k^{-\frac{2}{\tilde{\nu}}})$ as $k\to+\infty$.
\end{remark}

\begin{remark}
The result of Corollary \ref{cor4}  has the following geometric
meaning: Under the condition $|H|>0$, the non-isotropic dimension
$\tilde\nu$ of $\Omega$ related to $X$ will be a spectral invariant.
\end{remark}

For upper bounds of Dirichlet eigenvalues $\lambda_{k}$
 for sub-elliptic operator $-\triangle_{X}$, we have the following result.

\begin{theorem}\label{thm4}
Assume that the real smooth vector fields
$X=(X_{1},X_{2},\cdots,X_{m})$ satisfy the same conditions as in
Theorem \ref{thm1}. Then for any $k\geq 1$ and the $k^{th}$
Dirichlet eigenvalue $\lambda_{k}$ for the sub-elliptic operator
$-\triangle_{X}$, we have
\begin{equation}\label{1-12}
  \lambda_{k}\leq \tilde{C}\cdot (k-1)^{\frac{2}{n}}+\lambda_{1}~~\mbox{for all}~k\geq 1,
\end{equation}
where $\tilde{C}>0$ is a constant depending on $X$ and $\Omega$.
\end{theorem}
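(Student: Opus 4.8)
The plan is to derive \eqref{1-12} by comparing $-\triangle_{X}$ with the classical Dirichlet Laplacian $-\triangle$ on $\Omega$ through the min-max principle. The point is that an \emph{upper} bound for the Dirichlet eigenvalues is insensitive to the sub-elliptic (H\"ormander, non-characteristic) structure: it uses only that the coefficients of the $X_{j}$ are bounded on $\overline{\Omega}$, reflecting the fact that a sub-elliptic Dirichlet form is pointwise dominated by the Euclidean one. Hence the natural ``Euclidean'' growth order $k^{2/n}$ appears.

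First I would record the pointwise domination of Dirichlet forms. Writing $X_{j}=\sum_{l=1}^{n}a_{jl}(x)\partial_{x_{l}}$ with $a_{jl}\in C^{\infty}$ and using compactness of $\overline{\Omega}$, there is $C_{0}:=\sup_{\overline{\Omega}}\sum_{j,l}a_{jl}^{2}<+\infty$ with
\[
\sum_{j=1}^{m}|X_{j}u(x)|^{2}\le C_{0}\,|\nabla u(x)|^{2},\qquad x\in\Omega,\ u\in C^{1}.
\]
Hence $\|u\|_{H^{1}_{X}}\le(1+C_{0})^{1/2}\|u\|_{H^{1}}$ for $u\in C_{0}^{\infty}(\Omega)$, which forces the inclusion $H^{1}_{0}(\Omega)\subseteq H^{1}_{X,0}(\Omega)$ (an $H^{1}_{0}$-Cauchy sequence of test functions is $H^{1}_{X}$-Cauchy with the same $L^{2}$-limit), and on $H^{1}_{0}(\Omega)$ the Rayleigh quotient of $-\triangle_{X}$ satisfies $\|Xu\|^{2}_{L^{2}}/\|u\|^{2}_{L^{2}}\le C_{0}\,\|\nabla u\|^{2}_{L^{2}}/\|u\|^{2}_{L^{2}}$.

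Next, since the quadratic form of the self-adjoint operator $-\triangle_{X}$ is $u\mapsto\|Xu\|^{2}_{L^{2}}$ with form domain $H^{1}_{X,0}(\Omega)$, the Courant--Fischer formula together with the inclusion above yields
\[
\lambda_{k}=\min_{\substack{V\subseteq H^{1}_{X,0}(\Omega)\\ \dim V=k}}\ \max_{u\in V\setminus\{0\}}\frac{\|Xu\|^{2}_{L^{2}}}{\|u\|^{2}_{L^{2}}}\ \le\ C_{0}\,\mu_{k}(\Omega),
\]
where $\mu_{k}(\Omega)$ is the $k$-th Dirichlet eigenvalue of $-\triangle$ on $\Omega$. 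Then I would fix an open cube $K$ of side length $a>0$ with $\overline{K}\subset\Omega$; extending $H^{1}_{0}(K)$-functions by zero and applying min-max once more gives the domain monotonicity $\mu_{k}(\Omega)\le\mu_{k}(K)$. Since the eigenvalues of $-\triangle$ on $K$ are $\frac{\pi^{2}}{a^{2}}(\alpha_{1}^{2}+\cdots+\alpha_{n}^{2})$ with $\alpha\in(\mathbb{Z}^{+})^{n}$, choosing the box $\{1,\dots,\lceil k^{1/n}\rceil\}^{n}$, which contains at least $k$ multi-indices, bounds the $k$-th such value by $\frac{\pi^{2}n}{a^{2}}\lceil k^{1/n}\rceil^{2}\le\frac{4\pi^{2}n}{a^{2}}k^{2/n}$. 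Combining, $\lambda_{k}\le\tilde C_{0}\,k^{2/n}$ for all $k\ge1$ with $\tilde C_{0}:=4\pi^{2}nC_{0}/a^{2}$. Finally, for $k=1$ this is $\lambda_{1}\le\lambda_{1}$, while for $k\ge2$ one has $k\le2(k-1)$, so $\lambda_{k}\le\tilde C_{0}\,2^{2/n}(k-1)^{2/n}$; setting $\tilde C:=2^{2/n}\tilde C_{0}$ gives \eqref{1-12}.

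I do not expect a genuine obstacle here; the argument is essentially a comparison. The only points that require care are the inclusion $H^{1}_{0}(\Omega)\subseteq H^{1}_{X,0}(\Omega)$ and the legitimacy of the variational (Courant--Fischer) characterization of $\lambda_{k}$ for the (Friedrichs) self-adjoint realization of $-\triangle_{X}$, both standard, together with the bookkeeping confirming that $\tilde C$ depends only on $X$ (through $C_{0}$) and on $\Omega$ (through $n$ and the size of an inscribed cube). One could replace the cube computation in the last step by the classical Weyl/Kr\"oger-type bound $\mu_{k}(\Omega)\le c_{n}|\Omega|^{-2/n}k^{2/n}$, but the inscribed-cube argument keeps the proof self-contained.
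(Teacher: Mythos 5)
Your proof is correct, but it follows a genuinely different route from the paper. The paper proves Theorem \ref{thm4} via a Berezin--Li--Yau/Laptev-type dual argument (Proposition \ref{pro7-1}): it tests the spectral resolution of $-\triangle_{X}$ against the modulated plane waves $\theta_{\xi}(x)=\phi_{1}(x)e^{-ix\cdot\xi}$, applies Jensen's inequality to the convex function $(\lambda-t)_{+}$, and computes $\int_{\Omega}|X\theta_{\xi}|^{2}dx\le\lambda_{1}+M|\xi|^{2}$ to obtain the Riesz-mean inequality $\sum_{k}(\lambda-\lambda_{k})_{+}\ge C(\lambda-\lambda_{1})_{+}^{1+n/2}$, from which \eqref{1-12} follows by setting $\lambda=\lambda_{k}$. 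You instead dominate the sub-elliptic Dirichlet form pointwise by $C_{0}|\nabla u|^{2}$, invoke Courant--Fischer together with the inclusion $H^{1}_{0}(\Omega)\subseteq H^{1}_{X,0}(\Omega)$ to get $\lambda_{k}\le C_{0}\mu_{k}(\Omega)$, and then bound $\mu_{k}$ by domain monotonicity and explicit cube eigenvalues. Both arguments are sound; all the steps you flag (the form-domain inclusion, the validity of min-max for the Friedrichs realization, which rests on the compact embedding $H^{1}_{X,0}(\Omega)\hookrightarrow L^{2}(\Omega)$ already used in Proposition \ref{Poincare}, and the elementary estimate $\lceil k^{1/n}\rceil\le 2k^{1/n}$) check out. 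What the paper's route buys is the intermediate Riesz-mean inequality \eqref{7-1}, which is of independent interest and yields the additive $\lambda_{1}$ shift intrinsically, with a constant governed by $\sup|\phi_{1}|$ and $M=\sup_{x}\sum_{j}|X_{j}I(x)|^{2}$; what your route buys is a shorter, fully self-contained comparison argument whose constant depends only on $\sup\sum_{j,l}a_{jl}^{2}$ and an inscribed cube, and which in fact gives the slightly cleaner bound $\lambda_{k}\le\tilde{C}_{0}k^{2/n}$ before being massaged into the stated form.
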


It is well-known that, in the non-degenerate case, the eigenvalues
$\lambda_{k}$ of Dirichlet Laplacian have asymptotic behavior
$\lambda_{k}\approx k^{\frac{2}{n}}$ as $k\to +\infty$. Thus, the
result in \eqref{1-12} means that the upper bounds of Dirichlet
eigenvalues $\lambda_{k}$ of $-\triangle_{X}$ have the same order in
$k$ with that in the non-degenerate case. If the H\"ormander index
$Q>1$, we have $\tilde{\nu}>n$. Then from  Corollary \ref{cor4}
above, we know that the upper bounds \eqref{1-12} is not optimal in
the case of $|H|>0$. However, the following result demonstrates that
the result of the upper bounds \eqref{1-12} cannot be improved in
general in case $|H|=0$. To be more detailed, we  introduce the
following condition:

(A): We say that the vector fields $X=(X_{1},X_{2},\cdots,X_{m})$ satisfy assumption (A) on $\Omega$ if
   \begin{equation}\label{1-13}
   \int_{\Omega}\frac{dx}{\sum|\det(Y_{i_{1}},Y_{i_{2}},\cdots,Y_{i_{n}})(x)|}<+\infty.
   \end{equation}
   Here the sum is over all $n$-combinations $(Y_{i_{1}},Y_{i_{2}},\cdots,Y_{i_{n}})$ of the set
    $\{X_{1},X_{2},\cdots,X_{m}\}$.

Actually, we can deduce $|H|=0$ from the condition (A). In fact, for
each $x\in H$, since $Q\geq 2$, we have $\nu_{1}(x)=\dim
V_{1}(x)<n$. This is because that if $\nu_{1}(x)=n$, then
$V_{1}(x)=V_{2}(x)=\cdots=V_{Q}(x)=T_{x}(W)$, which implies that
$\nu(x)=\sum_{j=1}^{Q}j(\nu_{j}(x)- \nu_{j-1}(x))=n$, but
$\tilde{\nu}\geq n+Q-1\geq n+1>\nu(x)$, that means $x\notin H$. Thus
we introduce the set $E$ by $E:=\{x\in \Omega
|\sum|\det(Y_{i_{1}},Y_{i_{2}},\cdots,Y_{i_{n}})(x)|=0\}$. Then for
any $x\in H$, the fact $\nu_{1}(x)=\dim V_{1}(x)<n$ implies that
$\sum|\det(Y_{i_{1}},Y_{i_{2}}, \cdots,Y_{i_{n}})(x)|=0$, where the
sum is taken over all  $n$-combinations
$(Y_{i_{1}},Y_{i_{2}},\cdots,Y_{i_{n}})$ of the set
$\{X_{1},X_{2},\cdots,X_{m}\}$. Hence we have $H\subset E$.

 On the other hand, if we write $g(x)=\sum|\det(Y_{i_{1}},Y_{i_{2}},\cdots,Y_{i_{n}})(x)|\geq 0$,
  where the sum is taken over all  $n$-combinations $(Y_{i_{1}},Y_{i_{2}},\cdots,Y_{i_{n}})$ of
  the set $\{X_{1},X_{2},\cdots,X_{m}\}$. Thus if we let $A= \int_{\Omega}\frac{dx}{\sum|\det(Y_{i_{1}},
  Y_{i_{2}},\cdots,Y_{i_{n}})(x)|}<+\infty$, then \eqref{1-13} implies that
  the set $E_{n}=\{x\in \Omega|\frac{1}{g(x)} \geq n \} $ satisfies $|E_{n}|\leq \frac{A}{n}$
  for each $n\in \mathbb{N}^{+}$. Observe that $E_{n}=\{x\in \Omega|\frac{1}{g(x)} \geq n \}=\{x\in
   \Omega|0\leq g(x)\leq \frac{1}{n}\}$ and $E_{n+1}\subset E_{n}$. We then
    have $E=\{x\in \Omega|g(x)=0 \}=\cap_{n=1}^{\infty}E_{n}$. Therefore,
     $|E|=\lim_{n\to \infty}|E_{n}|=0$. Since $H\subset E$, we obtain $|H|=0$.\par

Our next result is stated as follows.
\begin{theorem}\label{thm5}
If the real smooth vector fields $X=(X_{1},X_{2},\cdots,X_{m})$ satisfy the same conditions of Theorem \ref{thm1} and assumption $(A)$ on $\Omega$, then we have
\begin{equation}\label{1-14}
  \sum_{i=1}^{k}\lambda_{i}\geq C\cdot k^{1+\frac{2}{n}}~~\mbox{for all}~~ k\geq 1.
\end{equation}
Here the constant $C>0$ is independent of $k$, and $\lambda_{i}$ is the $i^{th}$ Dirichlet eigenvalue of problem \eqref{1-1}.
\end{theorem}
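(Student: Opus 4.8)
The plan is to refine the heat-kernel estimate of Theorem \ref{thm1} by exploiting the geometry encoded in assumption (A). The proof of Theorem \ref{thm1} already produces the pointwise on-diagonal bound
\begin{equation*}
  h_D(x,x,t)\leq \frac{C}{|B_X(x,\sqrt{t}\,)|},\qquad x\in\Omega,\ 0<t\leq t_0,
\end{equation*}
where $B_X(x,r)$ is the Carnot--Carath\'eodory ball of radius $r$ and $|B_X(x,r)|$ its Lebesgue measure; I will reuse this, since it follows from domain monotonicity of the Dirichlet heat kernel together with the standard on-diagonal upper bound for sub-Laplacian heat semigroups on the doubling space $(\overline{\Omega},d_X,dx)$. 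Next I will invoke the volume estimate of Nagel, Stein and Wainger: there are $r_0>0$ and $c_0>0$ so that for all $x\in\overline{\Omega}$ and $0<r\leq r_0$,
\begin{equation*}
  |B_X(x,r)|\ \geq\ c_0\sum_{I}|\det(Y_{i_1},\dots,Y_{i_n})(x)|\,r^{d(I)}\ \geq\ c_0\,g(x)\,r^{n},
\end{equation*}
where the first sum runs over all $n$-tuples $I$ of commutators of $X_1,\dots,X_m$ of length $\leq Q$ (and $d(I)$ is the sum of the corresponding lengths), and the second inequality is obtained by discarding every term except those coming from $n$-tuples of the $X_j$ themselves, whose weight is $n$; here $g(x)=\sum|\det(Y_{i_1},\dots,Y_{i_n})(x)|$, the sum over $n$-combinations of $\{X_1,\dots,X_m\}$, is exactly the quantity in \eqref{1-13}.

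Combining the two displays gives $h_D(x,x,t)\leq C\bigl(c_0\,g(x)\bigr)^{-1}t^{-n/2}$ for $0<t\leq t_1:=\min(t_0,r_0^2)$, and then integrating the trace identity $\sum_{k\geq 1}e^{-\lambda_k t}=\int_\Omega h_D(x,x,t)\,dx$ I obtain
\begin{equation*}
  \sum_{k=1}^{\infty}e^{-\lambda_k t}\ \leq\ \frac{C}{c_0\,t^{n/2}}\int_\Omega\frac{dx}{g(x)}\ =\ \frac{CA}{c_0}\,t^{-n/2},\qquad 0<t\leq t_1,
\end{equation*}
with $A=\int_\Omega dx/g(x)<+\infty$ by assumption (A). This is the only place where (A) is used: it makes the pointwise bound—no longer controlled by a single uniform power of $t$, unlike in Theorem \ref{thm1}—integrable in $x$. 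Choosing $t=1/\lambda$ for $\lambda\geq 1/t_1$ and retaining only the terms with $\lambda_k\leq\lambda$ yields $\mathrm{e}^{-1}N(\lambda)\leq (CA/c_0)\,\lambda^{n/2}$; since only finitely many eigenvalues lie below $1/t_1$ and $\lambda_1>0$, this upgrades to a Weyl-type bound $N(\lambda)\leq C_2\,\lambda^{n/2}$ valid for all $\lambda\geq\lambda_1$, with $C_2$ depending only on $X$ and $\Omega$.

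To finish, I will convert this counting bound into the claimed sum estimate in the classical Li--Yau manner. Since $\lambda_1\leq\cdots\leq\lambda_k$ forces $k\leq N(\lambda_k)\leq C_2\lambda_k^{n/2}$, we get $\lambda_k\geq (k/C_2)^{2/n}$ for every $k\geq 1$, whence
\begin{equation*}
  \sum_{i=1}^{k}\lambda_i\ \geq\ C_2^{-2/n}\sum_{i=1}^{k}i^{2/n}\ \geq\ C_2^{-2/n}\int_0^k x^{2/n}\,dx\ =\ \frac{n}{n+2}\,C_2^{-2/n}\,k^{1+2/n},
\end{equation*}
which is \eqref{1-14} with $C=\frac{n}{n+2}C_2^{-2/n}$. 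I expect the only delicate points to be the two cited ingredients—the pointwise bound $h_D(x,x,t)\lesssim |B_X(x,\sqrt t)|^{-1}$ (already in hand from the proof of Theorem \ref{thm1}) and the Nagel--Stein--Wainger estimate—together with checking that all constants stay uniform up to $\partial\Omega$, which is where the non-characteristic hypothesis is needed so that the sub-Riemannian geometry near the boundary remains comparable to that in the interior; beyond this the argument is routine.
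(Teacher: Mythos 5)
Your argument is correct and follows essentially the same route as the paper: the on-diagonal bound $h_D(x,x,t)\lesssim |B_{d_{X'}}(x,\sqrt t)|^{-1}$, the Nagel--Stein--Wainger lower volume bound $|B_{d_{X'}}(x,r)|\gtrsim \Lambda(x,r)\geq g(x)r^{n}$ obtained by keeping only the degree-$n$ terms, and assumption (A) to integrate, yielding $\sum_k e^{-\lambda_k t}\leq C t^{-n/2}$ for small $t$. One small correction of attribution: the pointwise bound by the reciprocal ball volume is \emph{not} what the proof of Theorem \ref{thm1} produces (that proof gives only the uniform bound $Ct^{-\tilde\nu/2}$ via a Nash-type iteration with the weighted Sobolev inequality); in the paper it comes from the Kusuoka--Stroock Gaussian estimate for the globally extended operator (Proposition \ref{prop5-3}) together with the comparison $h_D\leq h$ from \eqref{5-8}, and the uniformity of constants on $\overline\Omega$ is secured precisely by the global extension of Proposition \ref{prop5-1} rather than by a separate boundary analysis. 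The only genuine divergence is the endgame: the paper applies Jensen's inequality directly to the partial sum $\sum_{j=1}^{k}e^{-\lambda_j t}$ with the choice $t=\frac{\delta_1}{2}\cdot\frac{k\lambda_1}{\sum_i\lambda_i}$, whereas you pass through a Chebyshev bound $N(\lambda)\leq C_2\lambda^{n/2}$, deduce $\lambda_k\geq (k/C_2)^{2/n}$, and sum; both are valid, and your version has the minor advantage of making the individual lower bound $\lambda_k\gtrsim k^{2/n}$ explicit, which is what the remark following the theorem uses anyway.
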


\begin{remark}
The conclusion of Theorem \ref{thm5} implies that, under condition
(A), the Dirichlet eigenvalues $\lambda_{k}$ for a degenerate
elliptic operator $-\triangle_X$ will have the same asymptotic
behavior with the non-degenerate Laplacian case: $\lambda_{k}\approx
k^{\frac{2}{n}}$ as $k\to +\infty$. Also in this case, the upper
bound \eqref{1-12} for $\lambda_{k}$ is optimal in terms of  the
growth  order in $k$.
\end{remark}

\begin{remark}
One important study where the system $X$ appears in application is
when one studies the CR vector fields of CR manifolds. For
simplicity, we let $M$ be a smooth real hypersurface in a complex
Euclidean space ${\mathbb C}^n$ with $n\ge 2$ defined by $\rho=0$.
Write  $(z_1,\cdots, z_n)$ for the coordinates of ${\mathbb C}^n$.
Assume without loss of generality that
$\rho_{z_n}:=\frac{\partial{\rho}}{\partial{z_n}}\not =0$ along $M$.
Then $L_j=\frac{\partial{
}}{\partial{z_j}}-\frac{{\rho_{z_j}}}{{\rho_{z_n}}}\frac{\partial{\
}}{\partial{z_n}}$ for $j=1,\cdots, n-1$ form a basis of CR vector
fields along $M$. Let $X_j=\hbox{Re}(L_j)$ and
$X_{j+n-1}=\hbox{Im}(L_j)$. Then the system $X=\{X_1,\cdots,
X_{2n-2}\}$ satisfies the H\"ormander condition if and only if $M$
is of finite type in the sense of Bloom-Graham that is equivalent to
the geometric condition that there is no complex hypersurface
contained in $M$ (see the book of Baouendi-Ebenfelt-Rothschild
\cite{BER} for related references). When $M$ is Levi non-degenerate,
then the H\"ormander index of $X$ is always $2$ at each point along
$M$ and thus the M\'{e}tivier condition holds. The other situation where
the M\'{e}tivier condition holds is when $M$ has  uniform finite
non-degeneracy (see the work of Baounendi-Huang-Rothschild
\cite{BHR} for definition and  many examples of this type
hypersurfaces). For instance, this is the case when $M\subset
{\mathbb C}^3$ is the Freeman cone  defined by $z_1^2+z_2^2=z_3^2$.
In general, the M\'{e}tivier condition is rarely satisfied for $X$ with
such a geometric background. The generalized M\'{e}tivier index is
associated with the degeneracy of the Levi-form along $M$. It is two
if and only if the point is a Levi non-degenerate point along at
least one CR direction and is at least three otherwise. The
H\"ormander sub-elliptic Laplacian associated with $X$ is more or
less the Kohn's sub-Laplacian operator of $M$. There have been much
work done to study the spectral theory in the strongly pseudo-convex
case (see \cite{BGS}). Our result in the present paper may shed the
light for $M$ being weakly pseudo-convex but of finite type where
much less is known. We hope to come back to such an application in a
future work.
\end{remark}

\begin{remark}
Some other results on eigenvalues of hypoelliptic operators, one can see \cite{Menikoff-Sjostrand1978, Menikoff-Sjostrand1978-1, Menikoff-Sjostrand1979, Sjostrand1980, Mohamed1993} and references therein.
\end{remark}

The plan of the rest paper is as follows. In Section 2, we present some preliminaries
 including the weighted Sobolev embedding theorem, the weighted Poincar\'{e}
 inequality induced by vector fields $X$, the sub-elliptic estimates, Carnot-Carath\'{e}odory
 metric and the estimate of volume for subunit ball. In Section 3,
 we establish a supremum norm estimates of the Dirichlet eigenfunction and
 an explicit lower bound of the Dirichlet eigenvalue. In Section 4, we discuss the existence
  of the Dirichlet heat kernel for the sub-elliptic operator $\triangle_{X}$ and
  some basic properties for the fundamental solution of the degenerate heat equation.
   In Section 5, we study the diagonal asymptotic  behavior of the Dirichlet heat kernel
    for the sub-elliptic operator $\triangle_{X}$.
    The proofs of Theorem \ref{thm1}, Theorem \ref{thm2} and Theorem \ref{thm3} will be given
    in Section 6, and the proofs of Theorem \ref{thm4} and Theorem \ref{thm5} will be given in Section 7
     respectively. Finally, as applications of Theorem \ref{thm2} -- Theorem \ref{thm5},
      we shall present more related  examples in Section 8.

\section{Preliminaries}
\subsection{Some estimates on weighted Sobolev spaces.}
We start with the following weighted Sobolev embedding theorem.
\begin{proposition}[Weighted Sobolev Embedding Theorem]
 \label{pro2-1}
 Let $X=(X_{1},X_{2},\cdots,X_{m})$ be $C^{\infty}$ vector fields defined on a connected open subset
  $W$ in $\mathbb{R}^n$, which satisfy condition (H). Assume that $ \Omega\subset\subset W$ is a bounded
  open subset with  smooth boundary $\partial\Omega$ which is non-characteristic for $X$.
   Denote $\tilde{\nu}$ by the generalized M\'{e}tivier index of $X$ on $\Omega$.
   Then for $1\leq p<\tilde{\nu}$,  there exists a constant $C=C(\Omega,X)>0$, such that for all
    $u\in C^{\infty}(\overline{\Omega})$, the inequality
\begin{equation}\label{2-1}
\|u\|_{L^{q}(\Omega)}\leq C\left(\|Xu\|_{L^{p}(\Omega)}+\|u\|_{L^{p}(\Omega)}\right)
\end{equation}
holds for $q=\frac{\tilde{\nu}p}{\tilde{\nu}-p}$.
\end{proposition}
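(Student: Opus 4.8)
The plan is to reduce the weighted (sub-elliptic) Sobolev inequality to the classical Euclidean Sobolev inequality by means of a local subelliptic-estimate argument combined with a covering of $\overline\Omega$ by the Carnot–Carathéodory balls introduced in Section~2. The main point is that near each $x_0\in\overline\Omega$ the vector fields $X$, after lifting or after the Nagel–Stein–Wainger change of coordinates, behave like a homogeneous system whose homogeneous dimension is exactly $\nu(x_0)$, and $\nu(x_0)\le\tilde\nu$; hence the local Sobolev exponent one obtains is no worse than $q=\frac{\tilde\nu p}{\tilde\nu-p}$ (using $\tilde\nu\ge\nu(x_0)$ and monotonicity of $s\mapsto \frac{sp}{s-p}$ on $s>p$). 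I would first dispose of the classical boundary case: since $\partial\Omega$ is non-characteristic, near $\partial\Omega$ at least one $X_j$ is transversal, so the full gradient is controlled by $Xu$ plus lower-order terms, and the classical trace/Sobolev inequality applies on a boundary collar. This removes any delicacy at $\partial\Omega$ and lets me work with $u\in C_0^\infty$ of a slightly enlarged domain in the interior estimates.

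The core interior step: cover $\overline\Omega$ by finitely many Carnot–Carathéodory balls $B(x_i,r_i)$ on each of which I have the representation formula / fractional-integral estimate
\[
|u(x)|\le C\int_{B(x_i,2r_i)}\frac{d_{CC}(x,y)}{|B(x,d_{CC}(x,y))|}\,|Xu(y)|\,dy + (\text{average term}),
\]
valid for finitely degenerate Hörmander systems (this is the Sobolev–Poincaré machinery invoked in Section~2 via the volume estimate for subunit balls). Feeding in the local volume lower bound $|B(x,\rho)|\gtrsim \rho^{\,\nu(x)}\ge \rho^{\,\tilde\nu}$ for small $\rho$ — which is where the generalized Métivier index enters — the fractional-integration (Hardy–Littlewood–Sobolev on the space of homogeneous type $(\Omega,d_{CC},dx)$) yields
\[
\|u\|_{L^{q}(B(x_i,r_i))}\le C\bigl(\|Xu\|_{L^{p}(B(x_i,2r_i))}+\|u\|_{L^{p}(B(x_i,2r_i))}\bigr),\qquad q=\tfrac{\tilde\nu p}{\tilde\nu-p}.
\]
Summing the $q$-th powers over the finite subcover, using that the overlap is bounded and that $\|\cdot\|_{\ell^q}\le\|\cdot\|_{\ell^p}$ for $q\ge p$, gives the global inequality \eqref{2-1}; the constant depends only on the covering, hence only on $\Omega$ and $X$.

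The main obstacle is making the local Sobolev exponent uniform. The homogeneous dimension $\nu(x)$ jumps from point to point (that is precisely why $\tilde\nu$ rather than $\nu$ appears), so the naive "local homogeneous group" argument gives a different exponent at each $x_0$. I would handle this by always using the \emph{uniform} subunit-ball volume bound $|B(x,\rho)|\ge c\,\rho^{\tilde\nu}$ for $0<\rho\le\rho_0$ with $c,\rho_0$ independent of $x$ (this follows from the Nagel–Stein–Wainger volume formula together with the definition $\tilde\nu=\max_x\nu(x)$ and compactness of $\overline\Omega$), rather than the sharp pointwise bound; the worse-dimension estimate $\tilde\nu$ is exactly what is needed so that a \emph{single} exponent $q=\frac{\tilde\nu p}{\tilde\nu-p}$ works everywhere and the pieces can be patched. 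A secondary technical point is justifying the representation formula and the doubling property on $(\Omega,d_{CC})$ for a general finitely degenerate system rather than a Carnot group; this is standard (Franchi–Lu–Wheeden, Jerison) and I would simply cite the versions already collected in Section~2.
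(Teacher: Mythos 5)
The paper does not actually prove this proposition: it is quoted from Yung's sharp subelliptic Sobolev embedding theorem (\cite{Yung2015}, Corollary 1), so you are attempting considerably more than the authors do. Your interior machinery is sound and is, in substance, what underlies the cited result. In particular the uniform lower bound $|B_{d_{X}}(x,\rho)|\ge c\rho^{\tilde{\nu}}$ for $\rho\le\rho_{0}$ does follow from \eqref{2-11}--\eqref{2-13}: every $x\in\overline{\Omega}$ admits an $n$-tuple $I$ with $d(I)=\nu(x)\le\tilde{\nu}$ and $\lambda_{I}(x)\neq0$, so $\sum_{d(I)\le\tilde{\nu}}|\lambda_{I}(x)|$ is a positive continuous function on the compact set $\overline{\Omega}$ and hence $\Lambda(x,\rho)\ge c\rho^{\tilde{\nu}}$ for $\rho\le 1$. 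Feeding this into the representation formula and fractional integration on the homogeneous space $(\overline{\Omega},d_{X},dx)$ gives the local inequality with the single exponent $q=\tilde{\nu}p/(\tilde{\nu}-p)$, and the patching over a finite cover is routine. Two smaller caveats: at $p=1$ fractional integration only yields the weak-type endpoint, so the strong inequality requires the standard truncation argument; and the potential estimate must be applied in its metric-measure form with the kernel $d_{X}(x,y)/|B_{d_{X}}(x,d_{X}(x,y))|$, since replacing $d_{X}$ by the Euclidean distance degrades the exponent in the wrong direction.

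The genuine gap is the boundary. The functions are in $C^{\infty}(\overline{\Omega})$, not $C_{0}^{\infty}(\Omega)$, so the boundary is the crux, and your dismissal of it rests on a false claim: non-characteristicity of $\partial\Omega$ means that at each boundary point \emph{some} $X_{j}$ is transversal, but it does not imply that the full Euclidean gradient is controlled by $Xu$ in a collar. For the fields $(\partial_{x_{1}},\partial_{x_{2}},x_{1}\partial_{x_{3}})$ in $\mathbb{R}^{3}$ and a boundary point on $\{x_{1}=0\}$ with normal $\partial_{x_{1}}$, the boundary is non-characteristic yet $\partial_{x_{3}}u$ is in no way dominated by $|Xu|$ nearby, so the ``classical trace/Sobolev on a boundary collar'' step fails. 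What the non-characteristic hypothesis actually buys --- and what must be proved or cited --- is either an extension operator across $\partial\Omega$ preserving $\|Xu\|_{L^{p}}$ (e.g.\ by reflection along integral curves of the transversal field), or the fact that $\Omega$ is a Boman/John domain for the metric $d_{X}$, so that a chain argument upgrades the local Sobolev--Poincar\'e inequalities on interior metric balls to the global inequality on $\Omega$. Without one of these, your covering argument cannot reach boundary points: the representation formula on a ball $B_{d_{X}}(x_{i},2r_{i})$ meeting $\partial\Omega$ integrates $Xu$ over points where $u$ is undefined.
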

\begin{proof}
See Corollary 1 in \cite{Yung2015}.
\end{proof}
In particular, if $Q\geq 2$, then $\tilde{\nu}\geq n+Q-1\geq 3$. Putting $p=2$ into  Proposition \ref{pro2-1}, we can deduce that
\begin{equation}
\label{2-2}
\left(\int_{\Omega}|u|^{\frac{2\tilde{\nu}}{\tilde{\nu}-2}}dx\right)^{\frac{\tilde{\nu}-2}{2\tilde{\nu}}}\leq C\left(\int_{\Omega}|Xu|^2dx+\int_{\Omega}|u|^2dx \right)^{\frac{1}{2}},
\end{equation}
where $u\in H_{X,0}^{1}(\Omega)$.

\par
We also have the following weighted Poincar\'{e} inequality for the vector fields $X$.
\begin{proposition}[Weighted Poincar\'{e} Inequality]
\label{Poincare}
 Suppose that $X=(X_{1},X_{2},\cdots,X_{m})$ satisfy the same conditions as in
 Theorem \ref{thm1}. Then the first eigenvalue $\lambda_{1}$ of the Dirichlet problem \eqref{1-1}
 for $-\triangle_{X}$ is positive. Moreover, we have the following weighted Poincar\'{e} inequality
\begin{equation}\label{2-3}
  \lambda_{1}\int_{\Omega}{|u|^2dx}\leq \int_{\Omega}|Xu|^2dx,~~ \forall u\in H_{X,0}^{1}(\Omega).
  \end{equation}
\end{proposition}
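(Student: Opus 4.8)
The plan is to obtain both assertions from the min--max description of $\lambda_1$ together with the fact that an element of $H_{X,0}^1(\Omega)$ annihilated by all the $X_j$ must vanish. Since $-\triangle_X$ is a non-negative self-adjoint operator whose quadratic form on the form domain $H_{X,0}^1(\Omega)$ is $u\mapsto\int_\Omega|Xu|^2\,dx$ and whose spectrum is discrete (as recalled after \eqref{1-1}), I would first record the Rayleigh quotient identity
\[
\lambda_1=\min_{\substack{u\in H_{X,0}^1(\Omega)\\ u\not\equiv0}}\frac{\int_\Omega|Xu|^2\,dx}{\int_\Omega|u|^2\,dx},
\]
the minimum being attained at some eigenfunction $u_1\in D(\triangle_X)\setminus\{0\}$. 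Granting $\lambda_1>0$, the Poincar\'e inequality \eqref{2-3} is then immediate: it is trivial for $u\equiv0$, and for $u\not\equiv0$ one simply clears denominators in $\int_\Omega|Xu|^2\,dx\big/\int_\Omega|u|^2\,dx\ge\lambda_1$. Thus everything reduces to showing $\lambda_1>0$.

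To prove $\lambda_1>0$ I would argue by contradiction. If $\lambda_1=0$, then $\int_\Omega|Xu_1|^2\,dx=\langle-\triangle_Xu_1,u_1\rangle=\lambda_1\|u_1\|_{L^2(\Omega)}^2=0$, so $X_ju_1=0$ in $L^2(\Omega)$ for each $j$; equivalently, one could bypass the spectral discreteness by taking a minimising sequence $\{u_k\}$ with $\|u_k\|_{L^2(\Omega)}=1$, $\|Xu_k\|_{L^2(\Omega)}\to0$, noting it is bounded in $H_{X,0}^1(\Omega)$, and using the compactness of the inclusion $H_{X,0}^1(\Omega)\hookrightarrow L^2(\Omega)$ (which follows from the subelliptic estimates recalled in Section~2 together with Rellich's theorem) to extract an $L^2$-limit $u_1$ with $\|u_1\|_{L^2(\Omega)}=1$ and $Xu_1=0$. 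In either case I get a nonzero $u_1\in H_{X,0}^1(\Omega)$ with $Xu_1=0$.

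It remains to see that $Xu_1=0$ forces $u_1\equiv0$. Testing against $\phi\in C_0^\infty(\Omega)$ and integrating by parts gives $\langle\triangle_Xu_1,\phi\rangle=-\sum_i\langle X_iu_1,X_i\phi\rangle=0$, so $\triangle_Xu_1=0$ in $\mathcal{D}'(\Omega)$; by H\"ormander's hypoellipticity theorem, $u_1\in C^\infty(\Omega)$. Then $X_ju_1\equiv0$ for every $j$, hence $[X_i,X_j]u_1=X_iX_ju_1-X_jX_iu_1\equiv0$ and, by induction, every iterated commutator of the $X_j$ annihilates $u_1$; since these commutators span $T_x(W)$ at each $x\in\overline\Omega$, this yields $\nabla u_1\equiv0$ on $\Omega$, so $u_1$ is constant on the connected set $\Omega$. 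Finally, because $\partial\Omega$ is non-characteristic for $X$, a nonzero constant cannot lie in the $C_0^\infty$-closure $H_{X,0}^1(\Omega)$ (near a boundary point some $X_{j_0}$ is transversal to $\partial\Omega$, forcing the boundary trace of elements of $H_{X,0}^1(\Omega)$ to vanish), so $u_1\equiv0$, contradicting $u_1\neq0$. Hence $\lambda_1>0$. The hard part is really the machinery one feeds in --- the discreteness of the spectrum of $-\triangle_X$, or equivalently the compact embedding $H_{X,0}^1(\Omega)\hookrightarrow L^2(\Omega)$, and the hypoellipticity of $\triangle_X$, both genuine consequences of condition (H); once these are available, the chain ``$Xu_1=0\Rightarrow u_1$ constant $\Rightarrow u_1=0$'' is routine.
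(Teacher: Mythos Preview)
Your proposal is correct and follows the same overall scheme as the paper---Rayleigh quotient characterisation, contradiction, extract a nonzero $u_1\in H_{X,0}^1(\Omega)$ with $Xu_1=0$ via the compact embedding, then show $u_1\equiv0$---but the endgame differs. The paper invokes Derridj's boundary regularity to get $u_1\in C^\infty(\overline\Omega)$ with $u_1|_{\partial\Omega}=0$ and then applies Bony's strong maximum principle to force $u_1\equiv0$. You instead use the bracket-generating condition directly: from $X_ju_1=0$ for all $j$ you iterate commutators to get $\nabla u_1=0$, hence $u_1$ is constant, and then appeal to the non-characteristic trace to kill the constant. Your route is more elementary (no maximum principle) and makes the role of condition~(H) completely transparent; the paper's route, on the other hand, packages the boundary behaviour cleanly through the regularity result $u_1\in C^\infty(\overline\Omega)$, $u_1|_{\partial\Omega}=0$, which is a bit stronger than what you invoke and is reused later in the paper. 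Both arguments are standard and valid.
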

\begin{proof}
We set
\[ \lambda_{1}=\inf_{\|\varphi\|_{2}=1, \varphi\in H_{X,0}^{1}(\Omega)}\|X\varphi\|^2_{L^2(\Omega)}. \]
Suppose  $\lambda_{1}=0$. Then there exists a sequence
$\{\varphi_{j}\}$ in  $H_{X,0}^{1}(\Omega)$ such that
$\|X\varphi_{j}\|_{L^2(\Omega)}\to 0$ with
$\|\varphi_{j}\|_{L^2(\Omega)}=1$. Since $H_{X,0}^{1}(\Omega)$ is
compactly embedded into $L^2(\Omega)$ (see \cite{Derridj,
Manfredini}), the variational calculus ensures that there exists
$\varphi_{0}\in H_{X,0}^{1}(\Omega)$ with
$\|\varphi_{0}\|_{L^2(\Omega)}=1$ that satisfies
$\triangle_{X}\varphi_{0}=0$ and $\|X\varphi_{0}\|_{L^2(\Omega)}=
0$. The condition (H) implies that $\triangle_{X}$ is hypo-elliptic
on $\Omega$. Meanwhile,  $\partial\Omega$ is $C^{\infty}$ and
non-characteristic for $X$. Thus, we know that $\varphi_{0}\in
C^{\infty}(\overline{\Omega})$ and $\varphi_{0}|_{\partial\Omega}=0$
(see \cite{Derridj, Kohn, Oleinik}). By  Bony's strong maximum
principle (see \cite{Bony1969, Oleinik}), we can deduce that
$\varphi_{0}$ must attain its maximum and minimum values on
$\partial\Omega$  unless $\varphi_{0}$ is a constant on
$\overline{\Omega}$. Thus we obtain $\varphi_{0}\equiv 0$, which
contradicts with $\|\varphi_{0}\|_{L^2(\Omega)}=1$. We thus proved
that $\lambda_{1}>0$.
\end{proof}

Combining \eqref{2-2} with \eqref{2-3}, we obtain the following
weighted Sobolev inequality.
\begin{proposition}[Weighted Sobolev Inequality]
\label{pro2-3} Suppose that $X=(X_{1},X_{2},\cdots,X_{m})$ satisfy
the same conditions as in Theorem \ref{thm1}. Then there exists a
constant $C=C(\Omega,X)>0$, such that for any $u\in
H_{X,0}^{1}(\Omega)$ we have
 \begin{equation}\label{2-4}
\left(\int_{\Omega}|u|^{\frac{2\tilde{\nu}}{\tilde{\nu}-2}}dx\right)^{\frac{\tilde{\nu}-2}{2\tilde{\nu}}}\leq C\left(\int_{\Omega}|Xu|^{2}dx\right)^{\frac{1}{2}}.
 \end{equation}
 \end{proposition}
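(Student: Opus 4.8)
The plan is to obtain \eqref{2-4} by simply chaining the two inequalities already in hand, with positivity of $\lambda_{1}$ doing the work of absorbing the lower‑order term. First I would recall the Sobolev‑type embedding \eqref{2-2}: since $Q\geq 2$ forces $\tilde{\nu}\geq n+Q-1\geq 3$, the exponent $q=\frac{2\tilde{\nu}}{\tilde{\nu}-2}$ is finite and admissible in Proposition \ref{pro2-1} with $p=2$, and a density argument (using that $C^{\infty}(\overline{\Omega})$ is dense in $H_{X,0}^{1}(\Omega)$) extends the estimate to all $u\in H_{X,0}^{1}(\Omega)$, giving
\[
\left(\int_{\Omega}|u|^{\frac{2\tilde{\nu}}{\tilde{\nu}-2}}\,dx\right)^{\frac{\tilde{\nu}-2}{2\tilde{\nu}}}\leq C\left(\int_{\Omega}|Xu|^{2}\,dx+\int_{\Omega}|u|^{2}\,dx\right)^{\frac{1}{2}}.
\]

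Next I would invoke the weighted Poincar\'{e} inequality \eqref{2-3} from Proposition \ref{Poincare}, which asserts $\lambda_{1}>0$ and $\lambda_{1}\int_{\Omega}|u|^{2}\,dx\leq\int_{\Omega}|Xu|^{2}\,dx$ for every $u\in H_{X,0}^{1}(\Omega)$. Consequently $\int_{\Omega}|u|^{2}\,dx\leq\lambda_{1}^{-1}\int_{\Omega}|Xu|^{2}\,dx$, and therefore
\[
\int_{\Omega}|Xu|^{2}\,dx+\int_{\Omega}|u|^{2}\,dx\leq\left(1+\lambda_{1}^{-1}\right)\int_{\Omega}|Xu|^{2}\,dx.
\]
Substituting this bound into the right‑hand side of the embedding inequality above yields \eqref{2-4} with the constant $C'=C\sqrt{1+\lambda_{1}^{-1}}$, which depends only on $\Omega$ and $X$, since $C$ comes from Proposition \ref{pro2-1} and $\lambda_{1}$ from Proposition \ref{Poincare}. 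As every step is already valid on all of $H_{X,0}^{1}(\Omega)$, no further approximation is required.

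I do not expect any genuine obstacle at this stage: the analytic substance is entirely contained in Proposition \ref{pro2-1} (the covering/scaling argument behind the weighted Sobolev embedding, quoted from \cite{Yung2015}) and in Proposition \ref{Poincare} (whose proof relies on the compact embedding $H_{X,0}^{1}(\Omega)\hookrightarrow L^{2}(\Omega)$, hypoellipticity of $\triangle_{X}$, and Bony's strong maximum principle). The only point deserving a line of care is the justification for dropping the term $\|u\|_{L^{2}(\Omega)}$ on the right—precisely what strict positivity of $\lambda_{1}$ provides—together with the bookkeeping that the resulting constant remains finite and independent of $u$.
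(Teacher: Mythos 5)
Your argument is correct and is exactly the paper's route: the authors also obtain \eqref{2-4} by combining the embedding \eqref{2-2} (Proposition \ref{pro2-1} with $p=2$) with the weighted Poincar\'{e} inequality \eqref{2-3} to absorb the $L^{2}$ term, yielding a constant of the form $C\sqrt{1+\lambda_{1}^{-1}}$. No discrepancies to report.
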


Also, we need the following sub-elliptic estimates.

\begin{proposition}[Sub-elliptic estimates I]
\label{subelliptic-estimate}
Assume that $X=(X_{1},X_{2},\cdots,X_{m})$ satisfy the condition (H) on an open domain $W$ in $\mathbb{R}^n$. Then, for any open subset $\Omega\subset\subset W$, there exist constants $\epsilon_{0}>0$ and  $C>0$ such that
\begin{equation}\label{2-5}
  \|u\|_{H^{\epsilon_{0}}(\mathbb{R}^n)}^2\leq C\left(\sum_{i=1}^{m}\|X_{i}u\|_{L^2(\mathbb{R}^n)}^{2}+\|u\|_{L^2(\mathbb{R}^n)}^2\right), ~~\forall u\in H_{X,0}^{1}(\Omega).
\end{equation}
\end{proposition}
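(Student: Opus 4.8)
The plan is to prove \eqref{2-5} by the classical commutator method of Hörmander \cite{hormander1967} and Kohn, in three stages: localization, reduction to iterated‑commutator estimates via pseudodifferential operators, and an induction on commutator length.

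First, by the very definition of $H^1_{X,0}(\Omega)$ it suffices to prove \eqref{2-5} for $u\in C_0^\infty(\Omega)$ and then pass to the limit. Since $\overline{\Omega}\subset\subset W$, I would cover $\overline{\Omega}$ by finitely many small coordinate balls $B_1,\dots,B_L\subset\subset W$, fix a partition of unity $\varphi_\ell\in C_0^\infty(B_\ell)$ with $\sum_\ell\varphi_\ell^2\equiv 1$ near $\overline{\Omega}$, and extend the $X_i$ smoothly from $W$ to all of $\mathbb{R}^n$. Because $[X_i,\varphi_\ell]$ is multiplication by a bounded smooth function, one has $\sum_i\|X_i(\varphi_\ell u)\|_0^2\lesssim\sum_i\|X_iu\|_0^2+\|u\|_0^2$ and $\|u\|_{\epsilon_0}^2\lesssim\sum_\ell\|\varphi_\ell u\|_{\epsilon_0}^2+\|u\|_0^2$, so matters reduce to the local estimate $\|v\|_{\epsilon_0}^2\lesssim\sum_i\|X_iv\|_0^2+\|v\|_0^2$ for $v\in C_0^\infty(B)$ supported in one small ball.

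Second, let $\Lambda^s$ be the Fourier multiplier with symbol $\langle\xi\rangle^s$, so $\|w\|_s=\|\Lambda^sw\|_{L^2}$, and recall two standard facts: (i) for any smooth vector field $X$ and any $s\in\mathbb{R}$, $[X,\Lambda^s]$ is a $\psi$DO of order $s$, hence $\|[X,\Lambda^s]w\|_0\lesssim\|w\|_s$; and (ii) $X_i^*=-X_i+c_i$ with $c_i\in C^\infty$. By condition (H), the iterated commutators $Y=[X_{i_1},[X_{i_2},\dots,X_{i_k}]\cdots]$ with $k\le Q$ span $\mathbb{R}^n$ at each point of $\overline{B}$, so each $\partial_j$ is a smooth‑coefficient combination of finitely many such $Y$'s; hence $\|v\|_{\epsilon_0}\lesssim\sum_j\|\partial_jv\|_{\epsilon_0-1}\lesssim\sum_Y\|Yv\|_{\epsilon_0-1}+\|v\|_0$, and it is enough to bound $\|Yv\|_{\epsilon_0-1}^2$ by the right‑hand side of \eqref{2-5} for each $Y$ of length $\le Q$. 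The heart of the matter is then an induction on the length $k$: one produces a decreasing sequence $1=\delta_1>\delta_2>\cdots>\delta_Q=\epsilon_0>0$ (Kohn's bookkeeping permits $\delta_k=2^{1-k}$) with $\|Yv\|_{\delta_k-1}^2\lesssim\sum_i\|X_iv\|_0^2+\|v\|_0^2$ for every commutator $Y$ of length $k$. The case $k=1$ is trivial since $\delta_1-1=0$; for the inductive step one writes $Y=[X_i,Z]$ with $Z$ of length $k-1$, expands $\|Yv\|_{\delta_k-1}^2=\langle[X_i,Z]v,\Lambda^{2(\delta_k-1)}Yv\rangle$, integrates by parts to move $X_i$ and $Z$ onto the other factor, commutes them through the powers of $\Lambda$ using fact (i), and bounds what remains by norms of lower‑length commutators — to which the inductive hypothesis applies, after a suitable Sobolev interpolation — plus remainders controlled by $\|v\|_0$; Cauchy–Schwarz and absorption of the surviving $\varepsilon$‑small multiple of $\|v\|_{\epsilon_0}$ into the left side of the final estimate close the induction.

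The main difficulty is precisely this last bookkeeping: justifying each integration by parts and symbol‑order count, pinning down the exact Sobolev exponents at which the inductive hypothesis enters, and arranging the interpolations so that the only uncontrolled quantity is an absorbable small multiple of $\|v\|_{\epsilon_0}$. This is carried out in detail in \cite{hormander1967} and in Kohn's subsequent treatment; the optimal exponent $\epsilon_0=1/Q$ is the Rothschild–Stein subelliptic estimate, obtained by lifting to a free nilpotent Lie group (cf. \cite{stein1976}), but any $\epsilon_0>0$ — in particular $\epsilon_0=2^{1-Q}$ — suffices for everything in this paper, so one may alternatively just invoke \cite{hormander1967} directly.
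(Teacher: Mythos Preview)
Your proposal is correct: the H\"ormander--Kohn commutator induction with exponents $\delta_k=2^{1-k}$ does yield \eqref{2-5}, and your localization and bookkeeping are standard and sound. The paper itself does not give an argument at all --- it simply cites Theorem~17 of Rothschild--Stein \cite{stein1976} --- so your sketch is strictly more informative. The only practical difference is that the paper's citation delivers the sharp exponent $\epsilon_0=1/Q$ via the nilpotent-lifting machinery, whereas your direct commutator argument gives $\epsilon_0=2^{1-Q}$; as you correctly note, any positive $\epsilon_0$ suffices for the applications downstream (Proposition~\ref{Pro3-2} and the heat-kernel construction), so nothing is lost.
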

\begin{proof}
 See Theorem 17 in \cite{stein1976}.
\end{proof}
\begin{proposition}[Sub-elliptic estimates II]
\label{Pro2-5}
Suppose that $X=(X_{1},X_{2},\cdots,X_{m})$ satisfy the condition (H) on an open domain $W$ in $\mathbb{R}^n$. Let $\Omega\subset\subset W$ be an open subset and $\phi\prec \phi_1$ be nested cut-off functions with support in $\Omega$ (i.e. $\phi,~\phi_1\in C_{0}^{\infty}(\Omega)$, and $\phi_1\equiv 1$ on the support of $\phi$ ). Then there exists $\epsilon>0$ so that for every $s\geq 0$, there is a constant $C>0$ such that
\begin{equation}\label{2-6}
\|\phi u\|_{H^{s+\epsilon}(\mathbb{R}^n)}\leq C\left(\|\phi_1\triangle_{X}u\|_{H^s(\mathbb{R}^n)}+\|\phi_1 u  \|_{L^2(\mathbb{R}^n)}\right),
\end{equation}
holds for any $u\in L^2(\Omega)\cap C^{\infty}(\Omega)$.
\end{proposition}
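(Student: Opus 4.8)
The plan is to establish the case $s=0$ directly from an integration-by-parts identity combined with Proposition~\ref{subelliptic-estimate}, and then to obtain all $s\ge 0$ by the standard pseudodifferential bootstrap of local hypoellipticity theory; one may take $\epsilon=\epsilon_{0}$, the gain furnished by Proposition~\ref{subelliptic-estimate}. For the base case, note first that since $u\in C^{\infty}(\Omega)$ and $\phi,\phi_{1}\in C_{0}^{\infty}(\Omega)$, all of the norms below are finite and $\phi u\in C_{0}^{\infty}(\Omega)\subset H_{X,0}^{1}(\Omega)$. Because $\phi_{1}\equiv 1$ on $\operatorname{supp}\phi$, pairing $\phi_{1}\triangle_{X}u$ against $\phi^{2}u$ and using $(X_{i}^{*}w,v)_{L^{2}}=(w,X_{i}v)_{L^{2}}$ together with $X_{i}(\phi^{2}u)=\phi^{2}X_{i}u+2\phi(X_{i}\phi)u$ gives
\[
\bigl(\phi_{1}\triangle_{X}u,\phi^{2}u\bigr)_{L^{2}}=-\sum_{i=1}^{m}\|\phi X_{i}u\|_{L^{2}}^{2}-2\sum_{i=1}^{m}\bigl(\phi X_{i}u,(X_{i}\phi)u\bigr)_{L^{2}}.
\]
Choosing the cut-offs so that $|X_{i}\phi|\le C\phi_{1}$ and using Young's inequality to absorb the last sum yields $\sum_{i}\|\phi X_{i}u\|_{L^{2}}^{2}\le C\bigl(\|\phi_{1}\triangle_{X}u\|_{L^{2}}^{2}+\|\phi_{1}u\|_{L^{2}}^{2}\bigr)$. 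Since $X_{i}(\phi u)=\phi X_{i}u+(X_{i}\phi)u$ with $|X_{i}\phi|\le C\phi_{1}$, applying Proposition~\ref{subelliptic-estimate} to $\phi u$ gives
\[
\|\phi u\|_{H^{\epsilon_{0}}(\mathbb{R}^{n})}^{2}\le C\Bigl(\sum_{i=1}^{m}\|X_{i}(\phi u)\|_{L^{2}}^{2}+\|\phi u\|_{L^{2}}^{2}\Bigr)\le C\bigl(\|\phi_{1}\triangle_{X}u\|_{L^{2}}^{2}+\|\phi_{1}u\|_{L^{2}}^{2}\bigr),
\]
which is the assertion for $s=0$ with $\epsilon=\epsilon_{0}$.

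For general $s\ge 0$ I would argue by induction on $N$ with $(N-1)\epsilon_{0}<s\le N\epsilon_{0}$, interposing a finite chain of nested cut-offs $\phi=\phi^{(0)}\prec\phi^{(1)}\prec\cdots\prec\phi^{(L)}\prec\phi_{1}$ (the chain is forced on us because the Bessel potentials $\Lambda^{t}:=\langle D\rangle^{t}$ are nonlocal, so each application produces localization errors that must be reabsorbed one cut-off level up). To bound $\|\phi u\|_{H^{s+\epsilon_{0}}}$ one writes, with $\psi:=\phi^{(1)}$ and $v:=\Lambda^{s}(\psi u)$,
\[
\Lambda^{s}(\phi u)=\phi\,v+[\Lambda^{s},\phi](\psi u),\qquad \phi(1-\psi)\equiv 0,
\]
runs the energy estimate of the $s=0$ case on $v$ (with the pair $\phi\prec\phi^{(2)}$), and commutes $\Lambda^{s}$ through $\triangle_{X}$ via
\[
\triangle_{X}v=\Lambda^{s}\bigl(\psi\triangle_{X}u\bigr)+\Lambda^{s}\bigl([\triangle_{X},\psi]u\bigr)+[\triangle_{X},\Lambda^{s}](\psi u),
\]
keeping the structural identities $[\triangle_{X},\psi]=\sum_{i}(X_{i}\psi)X_{i}-\sum_{i}X_{i}^{*}\bigl((X_{i}\psi)\,\cdot\,\bigr)$ and $[\triangle_{X},\Lambda^{s}]=-\sum_{i}\bigl([X_{i}^{*},\Lambda^{s}]X_{i}+X_{i}^{*}[X_{i},\Lambda^{s}]\bigr)$ in mind, and exploiting that $[X_{i},\Lambda^{s}]$ and $[X_{i}^{*},\Lambda^{s}]$ have order $s$ and $[\Lambda^{s},\phi]$ has order $s-1$ — one order less than the naive count. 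The term $\Lambda^{s}(\psi\triangle_{X}u)$ supplies the main term $\|\psi\triangle_{X}u\|_{H^{s}}\le\|\phi_{1}\triangle_{X}u\|_{H^{s}}$; every other term is, after integrating by parts so that the surviving derivative $X_{i}$ falls on $u$, estimated by a combination of $\bigl(\sum_{i}\|\phi^{(j)}X_{i}u\|_{L^{2}}^{2}\bigr)^{1/2}$ — controlled by the $s=0$ estimate on the pair $\phi^{(j)}\prec\phi^{(j+1)}$, hence by $\|\phi_{1}\triangle_{X}u\|_{L^{2}}+\|\phi_{1}u\|_{L^{2}}$ — and of Sobolev norms of $\phi^{(j)}u$ of order $\le s$, controlled by the induction hypothesis at the level $s-\epsilon_{0}$. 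The genuinely top-order contributions are absorbed back into the left-hand side with a small constant, and smoothing remainders of the $\Psi$DO calculus (which map every Sobolev space into $C^{\infty}$) are harmless. Finitely many such absorptions along the cut-off chain close the induction, giving the estimate for every $s\ge 0$. (Alternatively, since $\triangle_{X}=\sum_{i}X_{i}^{2}+\sum_{i}(\operatorname{div}X_{i})X_{i}$ differs from the sum of squares $\sum_{i}X_{i}^{2}$ only by a first-order term, \eqref{2-6} is a special case of the classical local sub-elliptic estimates for H\"ormander operators in \cite{stein1976, Kohn, Oleinik, Derridj}.)

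The main obstacle is precisely the book-keeping in the bootstrap: one must verify that commuting $\Lambda^{s}$ past the second-order operator $\triangle_{X}$ loses no net derivative — which is why $\triangle_{X}$ is kept in the form $-\sum X_{i}^{*}X_{i}$ and why the order-$s$ (not order-$(s+1)$) bound for $[X_{i},\Lambda^{s}]$ is used — and that the commutator $[\triangle_{X},\psi]$, although genuinely of first order, is controlled by keeping it in the divergence/vector-field form above, so that upon pairing exactly one derivative lands on a factor already governed by the $s=0$ estimate. Ensuring the cut-offs stay nested through the whole argument (so that each localization error is supported where the next cut-off equals $1$) and organizing the finite chain of absorptions is where the care lies; each individual estimate is routine, but the orchestration is the substance of the proof.
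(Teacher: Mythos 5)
The paper offers no proof of this proposition: it is quoted verbatim from the literature (Theorem 17.0.1 in Nagel's notes, Theorem 18 of Rothschild--Stein, and Kohn's work on hypoellipticity). Your proposal reconstructs precisely that classical argument, so there is no divergence in method to speak of -- only in the level of detail you supply versus the citation the authors give. Your base case $s=0$ is complete and correct: the integration-by-parts identity, the absorption via Young's inequality, and the passage to $\|\phi u\|_{H^{\epsilon_0}}$ through Proposition~\ref{subelliptic-estimate} applied to $\phi u\in C_0^\infty(\Omega)$ all check out (one small remark: you do not need to ``choose'' the cut-offs so that $|X_i\phi|\le C\phi_1$; this holds automatically with $C=\max_i\|X_i\phi\|_\infty$ because $\operatorname{supp}(X_i\phi)\subset\operatorname{supp}\phi$ and $\phi_1\equiv 1$ there). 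The structural identities you record for $[\triangle_X,\psi]$ and $[\triangle_X,\Lambda^s]$ are also correct, and keeping $\triangle_X$ in the form $-\sum_i X_i^*X_i$ so that exactly one derivative can be thrown onto a factor already controlled at level $s=0$ is indeed the crux of Kohn's bootstrap. The higher-$s$ portion remains a sketch -- the finite chain of cut-offs, the absorption of top-order terms, and the induction in increments of $\epsilon_0$ are described rather than executed -- but every ingredient you name is the standard one from the cited sources, and your closing observation that the estimate is a special case of the classical local sub-elliptic estimates is exactly the stance the paper itself takes. In short: correct in outline, fully rigorous at $s=0$, and consistent with (indeed, an expansion of) the paper's citation-only treatment.
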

\begin{proof}
See Theorem 17.0.1 in \cite{Nagel}, Theorem 18 in \cite{stein1976} and also refer to \cite{Kohn0}.
\end{proof}

From the Sobolev imbedding theorem we know that for $s>\frac{n}{2}$, there exists a constant $C>0$ such that
\begin{equation}\label{2-7}
  \sup_{x\in\mathbb{R}^n}|u(x)|\leq C\|u\|_{H^s(\mathbb{R}^n)}~~\mbox{for all}~~u\in H^{s}(\mathbb{R}^n).
\end{equation}
 Thus, combining \eqref{2-7} with Proposition \ref{Pro2-5}, we have following corollary.
\begin{corollary}
\label{corollary2-1}
Let $N\in \mathbb{N}^{+}$ with $N>\frac{n}{2\epsilon}$ (where $\epsilon$ was given in Proposition \ref{Pro2-5}) and $\xi(x)\in C_{0}^{\infty}(\Omega)$. If $u\in L^2(\Omega)\cap C^{\infty}(\Omega)$ and $(\triangle_{X})^{k}u\in L^2(\Omega)$ for $1\leq k\leq N$, then we have
\begin{equation}\label{2-8}
\sup_{x\in\Omega}|\xi(x)u(x)|\leq C\sum_{k=0}^{N}\|(\triangle_{X})^{k}u\|_{L^2(\Omega)}.
\end{equation}
\end{corollary}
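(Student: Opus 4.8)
The plan is to bootstrap the sub-elliptic estimate of Proposition~\ref{Pro2-5}: each application trades one power of $\triangle_{X}$ for a gain of $\epsilon$ in Sobolev regularity, so after $N$ applications we will have gained $N\epsilon$ derivatives; since $N>\frac{n}{2\epsilon}$ forces $N\epsilon>\frac n2$, we then conclude via the Sobolev embedding \eqref{2-7}. Here $\epsilon>0$ is the number furnished by Proposition~\ref{Pro2-5}, which is the same for every Sobolev order $s\ge 0$; the constants $C$ in \eqref{2-6} do depend on $s$, but only the finitely many orders $0,\epsilon,2\epsilon,\dots,(N-1)\epsilon$ occur, so we may fix one $C$ dominating all of them.

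First I would choose a chain of nested cut-off functions
\[
\xi=\phi_{0}\prec \phi_{1}\prec \phi_{2}\prec\cdots\prec \phi_{N},\qquad \phi_{j}\in C_{0}^{\infty}(\Omega),
\]
which is possible since $\operatorname{supp}\xi$ is compact in $\Omega$. Note that for $0\le j\le N$ the functions $(\triangle_{X})^{j}u$ lie in $L^{2}(\Omega)\cap C^{\infty}(\Omega)$: smoothness is clear because $\triangle_{X}$ is a differential operator with smooth coefficients and $u\in C^{\infty}(\Omega)$, while membership in $L^{2}(\Omega)$ holds by hypothesis for $1\le j\le N$ and by assumption for $j=0$. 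Hence Proposition~\ref{Pro2-5} applies at each step. Applying it with the pair $(\phi_{j},\phi_{j+1})$, Sobolev order $(N-j-1)\epsilon\ge 0$, and the function $(\triangle_{X})^{j}u$ in place of $u$, we obtain for $0\le j\le N-1$
\[
\big\|\phi_{j}(\triangle_{X})^{j}u\big\|_{H^{(N-j)\epsilon}(\mathbb{R}^{n})}\le C\Big(\big\|\phi_{j+1}(\triangle_{X})^{j+1}u\big\|_{H^{(N-j-1)\epsilon}(\mathbb{R}^{n})}+\big\|\phi_{j+1}(\triangle_{X})^{j}u\big\|_{L^{2}(\mathbb{R}^{n})}\Big).
\]
Chaining these inequalities for $j=0,1,\dots,N-1$ telescopes the Sobolev terms down to $\big\|\phi_{N}(\triangle_{X})^{N}u\big\|_{H^{0}(\mathbb{R}^{n})}=\big\|\phi_{N}(\triangle_{X})^{N}u\big\|_{L^{2}(\mathbb{R}^{n})}$ and collects the $L^{2}$ remainders, so that, using $\|\phi_{j}w\|_{L^{2}(\mathbb{R}^{n})}\le\|\phi_{j}\|_{L^{\infty}}\|w\|_{L^{2}(\Omega)}$ and absorbing the cut-off sup-norms and the finitely many constants into $C$,
\[
\|\xi u\|_{H^{N\epsilon}(\mathbb{R}^{n})}\le C\sum_{k=0}^{N}\big\|(\triangle_{X})^{k}u\big\|_{L^{2}(\Omega)}.
\]

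Finally, since $N\epsilon>\frac n2$, the Sobolev embedding \eqref{2-7} gives $\sup_{x\in\mathbb{R}^{n}}|\xi(x)u(x)|\le C\|\xi u\|_{H^{N\epsilon}(\mathbb{R}^{n})}$, which combined with the previous display yields \eqref{2-8}. The only point requiring care is the index bookkeeping: one must take the top Sobolev order to be exactly $N\epsilon$ so that after $N$ steps the residual order is $0$ (and every intermediate order stays nonnegative, as Proposition~\ref{Pro2-5} requires), and one must verify at each step that the function fed into Proposition~\ref{Pro2-5}, namely $(\triangle_{X})^{j}u$, still belongs to $L^{2}(\Omega)\cap C^{\infty}(\Omega)$ — which is precisely what the hypotheses $(\triangle_{X})^{k}u\in L^{2}(\Omega)$ for $1\le k\le N$ guarantee. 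No genuine analytic difficulty arises beyond this.
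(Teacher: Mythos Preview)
Your argument is correct and is precisely the standard bootstrap that the cited reference (Corollary~17.0.2 in Nagel's notes) carries out: iterate Proposition~\ref{Pro2-5} along a nested chain of cut-offs to gain $N\epsilon>\frac{n}{2}$ Sobolev derivatives on $\xi u$, then apply the embedding \eqref{2-7}. The paper itself gives no proof beyond that citation, so your write-up is in fact more detailed than what appears here; the only cosmetic remark is that the constant in \eqref{2-6} also depends on the chosen cut-off pair $(\phi_{j},\phi_{j+1})$, but since there are finitely many such pairs this is absorbed exactly as you say.
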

 \begin{proof}
See Corollary 17.0.2 in \cite{Nagel}.
 \end{proof}

\subsection{Carnot-Carath\'{e}odory metric and volume of subunit ball.}
We briefly introduce some geometric properties of the metric induced
by vector fields $X$ in this part. Readers can refer to
\cite{Fefferman},\cite{Stein1985} and \cite{Morbidelli2000} in
details.

Let $X=(X_{1},X_{2},\cdots,X_{m})$ satisfy the condition (H) on a connected open set
 $V\subset\mathbb{R}^n$ with H\"{o}rmander's index $r_{0}$. Then the subunit metric (also known as Carnot-Carath\'{e}odory metric, or control distance) can be defined as follows.\par
    For $x,y\in V$ and $\delta>0$, let $C(x,y,\delta)$ denote the collection of absolutely continuous mapping $\varphi:[0,1]\mapsto V$, which satisfying $\varphi(0)=x,\varphi(1)=y$ and
    \[ \varphi'(t)=\sum_{i=1}^{m}a_{i}(t)X_{i}(\varphi(t))\qquad \mbox{a.e for all}~ t\in [0,1] \]
    with $\sum_{k=1}^{m}|a_{k}(t)|^2\leq \delta^2$ for a.e $t\in [0,1]$. From the Chow-Rashevskii theorem (See \cite{MB2014}, Theorem 57) we know that there exist a $\delta>0$ such that $C(x,y,\delta)\neq\varnothing$. Then we can define the subunit metric $d_{X}(x,y)$ as follows
    \[ d_{X}(x,y):=\inf\{\delta>0~|~ \exists \varphi\in C(x,y,\delta)~\mbox{with}~\varphi(0)=x, \varphi(1)=y\}. \]
    Now, we denote
    \[ B_{d_{X}}(x,r):=\{y\in V~|~d_{X}(x,y)<r\} \]
    as the subunit ball induced by the subunit metric $d_{X}(x,y)$. For the volume of the subunit ball,
      a well-known result by Fefferman and Phong \cite{Fefferman} states that for any compact set $K\subset V$, there are constants $c=c(K)>0, \delta_{0}=\delta_{0}(K)>0$ and $\epsilon_{0}>0$ such that for any $x\in K$ and $0<r<\delta_{0}$ we have
    \begin{equation}
\label{2-9}
     B(x,r)\subset B_{d_{X}}(x,cr^{\epsilon_{0}}),
    \end{equation}
where $B(x,r)$ is the ball in the classical Euclidean metric.
Moreover,
 we can precisely estimate the volume of the subunit ball  by Proposition \ref{pro2-6} below.\par

Since $X_{1},\ldots,X_{m}$ together with their commutators of length at most $r_{0}$ span $T_{x}(V)$ at each point $x$ of $V$, we can write the commutators of higher order by means of the following standard notation.\par
 Let $I=(j_{1},\ldots,j_{k})$ ($1\leq j_{i}\leq m$) is a multi-index with length $|I|=k$, $$X_{I}=[X_{j_{1}},[X_{j_{2}},\cdots[X_{j_{k-1}},X_{j_{k}}]\cdots]].$$
The set  $X^{(k)}$ is defined as commutators of length $k$:
\[ X^{(1)}=\{X_{1},\ldots,X_{m}\}, \]
\[ X^{(2)}=\{[X_{1},X_{2}],\ldots,[X_{m-1},X_{m}]\},  \]
\[ X^{(k)}=\{X_{I}|I=(j_{1},\ldots,j_{k}),1\leq j_{i}\leq m, |I|=k \}. \]
Let $Y_{1},\cdots,Y_{q}$ be some enumeration of the components of $X^{(1)},\ldots,X^{(r_{0})}$. If $Y_{i}$ is an element of $X^{(k)}$, we say $Y_{i}$ has formal degree $d(Y_{i})=k$.
By notation in \cite{Stein1985},  for each $n$-tuple of integers $I=(i_{1},\ldots,i_{n}), 1\leq i_{j}\leq q$, we set
\begin{equation}\label{2-10}
\lambda_{I}(x):=\det(Y_{i_{1}},\cdots,Y_{i_{n}})(x).
\end{equation}
(If $Y_{i_{j}}=\sum_{k=1}^{n}a_{jk}(x)\partial_{x_{k}}$, then $\det(Y_{i_{1}},\cdots,Y_{i_{n}})(x)=\det(a_{jk}(x))$). We also set
\[ d(I):=d(Y_{i_{1}})+\cdots+d(Y_{i_{n}}), \]
then we define the $\Lambda(x,r)$ as
\begin{equation}\label{2-11}
  \Lambda(x,r):=\sum_{I}|\lambda_{I}(x)|r^{d(I)},
\end{equation}
where the sum is taken over all $n$-tuples. Now we state the
following proposition obtained by Nagel, Stein and Wainger.
\begin{proposition}[Ball-Box theorem]
\label{pro2-6}
For any compact set $ K \subset V$, there exists $\delta_{0}=\delta_{0}(K)>0$, and $C_{1},C_{2}>0$ such that for all $x\in K$ and $ r\leq \delta_{0}$ we have
\begin{equation}\label{2-12}
  C_{1}\Lambda(x,r)\leq |B_{d_{X}}(x,r)|\leq C_{2}\Lambda(x,r),
\end{equation}
where $|B_{d_{X}}(x,r)|$ is the Lebesgue measure of $B_{d_{X}}(x,r)$.
\end{proposition}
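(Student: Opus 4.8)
The plan is to follow the method of Nagel, Stein and Wainger: reduce the computation of $|B_{d_X}(x,r)|$ to a Jacobian estimate in a system of scaled canonical coordinates adapted to the base point $x$ and the scale $r$. First I would fix a compact $K\subset V$, shrink $\delta_0>0$ depending only on the a priori $C^\infty$ bounds of the $Y_i$'s and their structure functions on a slightly larger compact neighbourhood of $K$, and for $x\in K$, $0<r\le\delta_0$ select among the finitely many $n$-tuples $I=(i_1,\dots,i_n)$ one, call it $I^\ast=(i_1^\ast,\dots,i_n^\ast)$, for which $|\lambda_{I^\ast}(x)|\,r^{d(I^\ast)}$ is within a factor $2$ of $\max_I|\lambda_I(x)|\,r^{d(I)}$. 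Since the number of tuples is a fixed constant, $\Lambda(x,r)$ is comparable to $|\lambda_{I^\ast}(x)|\,r^{d(I^\ast)}$ with constants independent of $x$ and $r$. Writing $Z_j:=r^{d(Y_{i_j^\ast})}Y_{i_j^\ast}$, I then introduce the map
\[
\Theta_{x,r}(u_1,\dots,u_n):=\exp(u_nZ_n)\circ\cdots\circ\exp(u_1Z_1)(x),
\]
defined for $u$ in a fixed box $\{|u|<\rho\}$ with $\rho=\rho(K)$.

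The heart of the argument is a uniform structural estimate: pulled back by $\Theta_{x,r}$, each scaled field $r^{d(Y_\ell)}Y_\ell$ ($1\le\ell\le q$) becomes a smooth vector field on $\{|u|<\rho\}$ whose coefficients, together with all derivatives, are bounded by a constant depending only on $K$, uniformly in $x\in K$ and $r\le\delta_0$. This is where the near-maximality of $I^\ast$ is essential: expanding the commutators $[Y_a,Y_b]=\sum_\ell c_{ab}^\ell Y_\ell$ in the $Y$-frame and using that $|\lambda_{I^\ast}(x)|\,r^{d(I^\ast)}$ dominates every $|\lambda_I(x)|\,r^{d(I)}$ forces the "large" scaled fields $Z_1,\dots,Z_n$ to span, with bounded coefficients on the box, all the remaining scaled fields $r^{d(Y_\ell)}Y_\ell$. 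I would establish this by the bootstrapping/bookkeeping argument of Nagel--Stein--Wainger: from the ODEs defining $\Theta_{x,r}$ one first gets $(\Theta_{x,r})_\ast^{-1}Z_j=\partial_{u_j}+O(|u|)$, and then propagates the bound to the other fields via the structure equations. Obtaining these estimates \emph{uniformly} in $x$ and $r$ is what I expect to be the main obstacle; the rest of the proof is comparatively soft.

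Granting the structural estimate, I would close the argument in two halves. For the upper containment of the ball: a displacement of size $r^{d(Y_{i_j^\ast})}$ along $Y_{i_j^\ast}$ is realizable by an iterated subunit bracket manoeuvre of $X$-length $\approx r$, so a path built from the flows of $Z_1,\dots,Z_n$ over parameter $|u|<c$ is a subunit path of length $\le Cr$, whence $\Theta_{x,r}(\{|u|<c\})\subset B_{d_X}(x,Cr)$. For the lower containment: since $rX_i$ is itself one of the scaled fields $r^{d(Y_\ell)}Y_\ell$, in $\Theta_{x,r}$-coordinates it has bounded coefficients, so a subunit path with $d_X$-parameter $<c'r$ satisfies $|\dot u|\le Cc'$ in these coordinates and, by a Gronwall estimate, stays inside the box once $c'$ is chosen small; thus $B_{d_X}(x,c'r)\subset\Theta_{x,r}(\{|u|<C'\})$. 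The subunit ball is therefore sandwiched between two images of fixed boxes under $\Theta_{x,r}$, so $|B_{d_X}(x,r)|$ is comparable to $|\Theta_{x,r}(\text{box})|$.

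Finally, $|\Theta_{x,r}(\text{box})|=\int_{\text{box}}|\det D\Theta_{x,r}(u)|\,du$, and the Jacobian at $u=0$ equals
\[
|\det(Z_1,\dots,Z_n)(x)|=|\lambda_{I^\ast}(x)|\prod_{j=1}^n r^{d(Y_{i_j^\ast})}=|\lambda_{I^\ast}(x)|\,r^{d(I^\ast)}.
\]
By the structural estimate (which gives $(\Theta_{x,r})_\ast^{-1}Z_j=\partial_{u_j}+O(|u|)$, hence control of $D\Theta_{x,r}$ from above and below on a fixed sub-box), the Jacobian is comparable to its value at the origin throughout that sub-box. Combining with the two containments, $|B_{d_X}(x,r)|\approx|\lambda_{I^\ast}(x)|\,r^{d(I^\ast)}\approx\Lambda(x,r)$, with constants depending only on $K$, which is the assertion of the proposition.
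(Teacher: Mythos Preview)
The paper does not prove this proposition at all: its entire proof reads ``See \cite{Morbidelli2000} and \cite{Stein1985}.'' Your proposal is a faithful outline of the Nagel--Stein--Wainger argument contained in those very references (selection of a near-maximal $n$-tuple $I^\ast$, scaled canonical coordinates $\Theta_{x,r}$, the uniform structural estimate for the pulled-back scaled fields, the two box containments, and the Jacobian comparison), so in substance you are supplying exactly what the paper defers to the literature. One small remark: in the original \cite{Stein1985} the coordinate map is taken as a single exponential $\exp(\sum_j u_j Z_j)(x)$ rather than the iterated flow you wrote, but the two are interchangeable for this purpose since they differ by a diffeomorphism with bounded Jacobian on the fixed box, and both versions appear in the literature.
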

\begin{proof}
See \cite{Morbidelli2000} and \cite{Stein1985}.
\end{proof}

According to \eqref{2-11} and Proposition \ref{pro2-6}, we can deduce that the pointwise homogeneous  dimension of $x$ has the following property.
\begin{equation}\label{2-13}
\nu(x)=\sum_{j=1}^{r_{0}}j(\nu_{j}(x)-\nu_{j-1}(x))=\lim_{r\to 0^{+}}\frac{\log\Lambda(x,r)}{\log r}=\min\{d(I)|\lambda_{I}(x)\neq 0\}.
\end{equation}
Then from the \eqref{2-11},\eqref{2-12} and \eqref{2-13}, we know
that $|B_{d_{X}}(x,r)|$ behaves like $r^{\nu(x)}$  as $r\to
0^{+}$.\par

\section{Explicit estimates of Dirichlet eigenfunctions and Dirichlet eigenvalues}
\subsection{Supremum norm estimates of Dirichlet eigenfunctions}
    The task in this part is to estimate the supremum norm of Dirichlet eigenfunctions for sub-elliptic operator $-\triangle_{X}=\sum_{i=1}^{m}X_{i}^{*}X_{i}$.\par
    For each $i\in \mathbb{N}^{+}$, $\phi_{i}\in H_{X,0}^{1}(\Omega)$ denotes as the $i^{th}$  Dirichlet eigenfunction corresponding with the $i^{th}$  Dirichlet eigenvalue $\lambda_{i}$, we have $(\triangle_{X}+\lambda_{i})\phi_{i}=0$. According to the regularity results of Derridj in \cite{Derridj}, we know that $\phi_{i}\in C^{\infty}(\overline{\Omega})$ and $\phi_{i}|_{\partial\Omega}=0$. Moreover, the sequence of eigenfunctions $\{\phi_{i}\}_{i=1}^{\infty}$ constitutes an orthogonal basis in $H_{X,0}^{1}(\Omega)$ with $\|\phi_{i}\|_{L^2(\Omega)}=1$, which is also a standard orthogonal basis in $L^2(\Omega)$. Furthermore, we have the following estimates of $L^{\infty}$-norm for the Dirichlet eigenfunction $\phi_{i}$.
\begin{proposition}\label{pro3-1}
Suppose that  $X=(X_{1},X_{2},\cdots,X_{m})$ satisfy the conditions of Theorem \ref{thm1}. We have
\begin{equation}\label{3-1}
\|\phi_{i}\|_{\infty}\leq C_{1}\cdot\lambda_{i}^{\frac{\tilde{\nu}}{4}},
\end{equation}
where $C_{1}$ is a positive constant depending on $X$ and $\Omega$, $\tilde{\nu}$ is the generalized M\'{e}tivier index on $\Omega$, $\|\cdot\|_{\infty}$ denotes the $L^{\infty}$-norm on $\Omega$.
\end{proposition}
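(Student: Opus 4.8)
The plan is to derive \eqref{3-1} by a Moser-type iteration driven by the weighted Sobolev inequality of Proposition \ref{pro2-3}. The eigenfunction equation $(\triangle_{X}+\lambda_{i})\phi_{i}=0$, tested against suitable powers of $\phi_{i}$ in $H^{1}_{X,0}(\Omega)$, will give a quantitative bound on $\int_{\Omega}|\phi_{i}|^{2(\gamma-1)}|X\phi_{i}|^{2}\,dx$ for each $\gamma\ge 1$; inserting this into \eqref{2-4} upgrades control of $\phi_{i}$ in $L^{2\gamma}(\Omega)$ to control in $L^{2\gamma\chi}(\Omega)$ with $\chi:=\tilde{\nu}/(\tilde{\nu}-2)>1$, and iterating from $\|\phi_{i}\|_{L^{2}(\Omega)}=1$ yields the $L^{\infty}$ bound with the exponent $\tilde{\nu}/4$. (One could instead try to feed the identities $(\triangle_{X})^{k}\phi_{i}=(-\lambda_{i})^{k}\phi_{i}$ into Corollary \ref{corollary2-1}, but that produces a worse power of $\lambda_{i}$ and only interior bounds, so the Sobolev iteration is the right route.)

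First I would record that, by the regularity results quoted just above, $\phi_{i}\in C^{\infty}(\overline{\Omega})$, $\phi_{i}|_{\partial\Omega}=0$ and $\|\phi_{i}\|_{L^{2}(\Omega)}=1$, so for $\gamma\ge 1$ both $v:=|\phi_{i}|^{2(\gamma-1)}\phi_{i}$ and $w:=|\phi_{i}|^{\gamma-1}\phi_{i}$ belong to $H^{1}_{X,0}(\Omega)$ (to keep all integrals finite a priori one may first replace these by the truncations built from $\min(|\phi_{i}|,L)$ and let $L\to\infty$ at the end). Using $X_{j}v=(2\gamma-1)|\phi_{i}|^{2(\gamma-1)}X_{j}\phi_{i}$ and $X_{j}w=\gamma|\phi_{i}|^{\gamma-1}X_{j}\phi_{i}$, testing the weak form $\sum_{j}\langle X_{j}\phi_{i},X_{j}v\rangle=\lambda_{i}\langle\phi_{i},v\rangle$ gives
\[
\int_{\Omega}|Xw|^{2}\,dx=\frac{\gamma^{2}}{2\gamma-1}\,\lambda_{i}\int_{\Omega}|\phi_{i}|^{2\gamma}\,dx\le \gamma\,\lambda_{i}\int_{\Omega}|\phi_{i}|^{2\gamma}\,dx .
\]
Applying \eqref{2-4} to $w$, and noting $\|w\|_{L^{2\tilde{\nu}/(\tilde{\nu}-2)}(\Omega)}^{2}=\|\phi_{i}\|_{L^{2\gamma\chi}(\Omega)}^{2\gamma}$ together with $\int_{\Omega}|\phi_{i}|^{2\gamma}\,dx=\|\phi_{i}\|_{L^{2\gamma}(\Omega)}^{2\gamma}$, this becomes
\[
\|\phi_{i}\|_{L^{2\gamma\chi}(\Omega)}\le \bigl(C^{2}\gamma\,\lambda_{i}\bigr)^{\frac{1}{2\gamma}}\,\|\phi_{i}\|_{L^{2\gamma}(\Omega)},\qquad \gamma\ge 1 .
\]

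I would then iterate this estimate along $\gamma=\gamma_{j}:=\chi^{\,j}$, $j=0,1,2,\dots$, starting from $\|\phi_{i}\|_{L^{2}(\Omega)}=1$. The telescoping product gives, for each $J\ge 1$,
\[
\|\phi_{i}\|_{L^{2\chi^{J}}(\Omega)}\le \Bigl(\prod_{j=0}^{J-1}\bigl(C^{2}\chi^{\,j}\bigr)^{\frac{1}{2\chi^{\,j}}}\Bigr)\cdot \lambda_{i}^{\;\sum_{j=0}^{J-1}\frac{1}{2\chi^{\,j}}} .
\]
Since $\chi>1$, as $J\to\infty$ the prefactor converges to a finite constant $C_{0}=C_{0}(X,\Omega)$ (its logarithm equals $\tfrac{1}{2}(\log C^{2})\sum_{j}\chi^{-j}+\tfrac{1}{2}(\log\chi)\sum_{j}j\chi^{-j}<+\infty$), while the exponent of $\lambda_{i}$ tends to
\[
\sum_{j=0}^{\infty}\frac{1}{2\chi^{\,j}}=\frac{1}{2}\cdot\frac{\chi}{\chi-1}=\frac{1}{2}\cdot\frac{\tilde{\nu}}{2}=\frac{\tilde{\nu}}{4},
\]
using $\chi/(\chi-1)=\tilde{\nu}/2$. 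Letting $J\to\infty$ and invoking $\|\phi_{i}\|_{L^{\infty}(\Omega)}=\lim_{p\to\infty}\|\phi_{i}\|_{L^{p}(\Omega)}$ (valid since $\phi_{i}\in C^{\infty}(\overline{\Omega})$ and $|\Omega|<+\infty$) gives $\|\phi_{i}\|_{\infty}\le C_{0}\,\lambda_{i}^{\tilde{\nu}/4}$, which is \eqref{3-1} after renaming the constant. I expect the only delicate points to be the admissibility of the test functions $v,w$ in $H^{1}_{X,0}(\Omega)$ — handled by the truncation device above — and the uniform-in-$j$ bookkeeping ensuring $\prod_{j}(C^{2}\chi^{\,j})^{1/(2\chi^{\,j})}$ stays bounded; both are routine once \eqref{2-4} is in hand, and I do not anticipate any obstacle specific to the sub-elliptic structure beyond what is already encoded in the weighted Sobolev inequality.
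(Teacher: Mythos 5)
Your proposal is correct and follows essentially the same route as the paper: a Moser iteration driven by the weighted Sobolev inequality \eqref{2-4}, with the same test functions (your $v=|\phi_i|^{2(\gamma-1)}\phi_i$ is the paper's $|\phi_i|^{s-1}\mathrm{sgn}(\phi_i)$ under $s=2\gamma$), the same elementary bound $\gamma^2/(2\gamma-1)\le\gamma$, the same iteration exponents $2\chi^{j}$, and the same computation of the limiting exponent $\tfrac12\cdot\chi/(\chi-1)=\tilde\nu/4$. The only cosmetic difference is your truncation remark for admissibility of the test functions, which the paper sidesteps by invoking $\phi_i\in C^\infty(\overline\Omega)$ up front.
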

\begin{proof}
Since $-\triangle_{X}\phi_{i}=\lambda_{i}\phi_{i}$, then
\begin{equation}\label{3-2}
\int_{\Omega}X\phi_{i}\cdot Xudx=\lambda_{i}\int_{\Omega}\phi_{i}u dx,\qquad \forall u\in H_{X,0}^{1}(\Omega).
\end{equation}
For any constant $s\geq 2$, we take $u=|\phi_{i}|^{s-1}\cdot \text{sgn}(\phi_{i})$. Since
\[ Xu=(s-1)|\phi_{i}|^{s-2}X\phi_{i},\]
we can deduce that $u\in H_{X,0}^{1}(\Omega)$. Therefore \eqref{3-2} implies that
\begin{equation}\label{3-3}
\lambda_{i}\int_{\Omega}|\phi_{i}|^{s}dx=(s-1)\int_{\Omega}|\phi_{i}|^{s-2}|X\phi_{i}|^2dx.
\end{equation}
For each $ f\in H_{X,0}^{1}(\Omega)$, we know that $X|f|=\text{sgn}(f)Xf$ (cf. \cite{Garofalo1996} Lemma 3.5). Moreover, \eqref{3-3} gives
\begin{equation}\label{3-4}
  \lambda_{i}\int_{\Omega}|\phi_{i}|^{s}dx=(s-1)\int_{\Omega}|\phi_{i}|^{s-2}|X\phi_{i}|^2dx=(s-1)\int_{\Omega}|\phi_{i}|^{s-2}|X|\phi_{i}||^2dx.
\end{equation}

\par
On the other hand, for any non-negative function $v\in H_{X,0}^{1}(\Omega)\cap L^{\infty}(\Omega)$ and
 any constant $s\geq 2$, integrating by parts and applying the weighted Sobolev inequality  \eqref{2-4} we
 have
\begin{align}\label{3-5}
-(s-1)\int_{\Omega}v^{s-2}|Xv|^2dx&= -\frac{4(s-1)}{s^2}\int_{\Omega}|X(v^{\frac{s}{2}})|^2dx \notag \\
   &\leq -\frac{4C(s-1)}{s^2}\left(\int_{\Omega}|v|^{\frac{s\tilde{\nu}}{\tilde{\nu}-2}}dx \right)^{\frac{\tilde{\nu}-2}{\tilde{\nu}}} \notag \\
   & \leq -\frac{2C}{s}\left(\int_{\Omega}|v|^{\frac{s\tilde{\nu}}{\tilde{\nu}-2}}dx \right)^{\frac{\tilde{\nu}-2}{\tilde{\nu}}},
\end{align}
where $C$ is the Sobolev constant in \eqref{2-4}. Thus if $v=|\phi_{i}|$, then $v\in H_{X,0}^{1}(\Omega)\cap L^{\infty}(\Omega)$.
Hence \eqref{3-4} and \eqref{3-5} assert that
\[ \int_{\Omega}|\phi_{i}|^{s}dx\geq \frac{2C}{s\lambda_{i}}\left(\int_{\Omega}|\phi_{i}|^{\frac{s\tilde{\nu}}{\tilde{\nu}-2}} dx\right)^{\frac{\tilde{\nu}-2}{\tilde{\nu}}},\]
which can be rewritten as
\[ \left(\frac{2C}{s\lambda_{i}}\right)^{\frac{1}{s}}\|\phi_{i}\|_{s\beta}\leq \|\phi_{i}\|_{s} \]
for all $s\geq 2$, with $\beta=\frac{\tilde{\nu}}{\tilde{\nu}-2}$.
Here $\|\phi_{i}\|_{s}$ is the $L^{s}$-norm of $\phi_{i}$. Setting
$s=2\beta^{j}\geq 2$, respectively for $j=0,1,2,\cdots$, then we
have
\[ \|\phi_{i}\|_{2\beta^{j+1}}\leq \left(\frac{\beta^{j}\lambda_{i}}{C}\right)^{\frac{1}{2\beta^{j}}}\|\phi_{i}\|_{2\beta^{j}}. \]
Iterating this estimate and using $\|\phi_{i}\|_{2}=1$, we conclude that
\begin{align*}
\|\phi_{i}\|_{2\beta^{j+1}}&\leq \prod_{k=0}^{j}\left(\frac{\beta^{k}\lambda_{i}}{C}\right)^{\frac{1}{2\beta^{k}}}\\
&=\beta^{\sum_{k=0}^{j}\frac{k}{2\beta^{k}}}\cdot C^{-\frac{1}{2}\sum_{k=0}^{j}\beta^{-k}}\cdot \lambda_{i}^{\frac{1}{2}\sum_{k=0}^{j}\beta^{-k}}\\
&=\beta^{\frac{1}{2}\cdot\left(\frac{\beta^{-1}(1-\beta^{-j})}{(1-\beta^{-1})^{2}}-\frac{j}
{(1-\beta^{-1})\beta^{j+1}} \right)}\cdot C^{-\frac{1}{2}\cdot\frac{1-\beta^{-(j+1)}}{1-\beta^{-1}}}\cdot\lambda_{i}^{\frac{1}{2}
\cdot
\frac{1-\beta^{-(j+1)}}{1-\beta^{-1}}}.
\end{align*}

Letting $j\to \infty$ and applying the fact that $\lim_{p\to\infty}\|\phi_{i}\|_{p}=\|\phi_{i}\|_{\infty}$,
we obtain
\[ \|\phi_{i}\|_{\infty}\leq \beta^{\frac{\beta}{2(\beta-1)^2}}\cdot C^{-\frac{\beta}{2(\beta-1)}}\cdot\lambda_{i}^{\frac{\beta}{2(\beta-1)}}=C_{1}
\cdot\lambda_{i}^{\frac{\tilde\nu}{4}}, \]
where $C_{1}$ is a positive constant depends on $C$ and $\tilde\nu$.
\end{proof}

\subsection{An explicit lower bound of Dirichlet eigenvalues}
 The aim in this part is to get an explicit lower bound of the sub-elliptic Dirichlet eigenvalue $\lambda_{k}$. Although the lower bound of $\lambda_{k}$ may not be precise, it is useful in the process of estimating Dirichlet heat kernel of $\triangle_{X}$.
\begin{proposition}\label{Pro3-2}
Suppose $X=(X_{1},X_{2},\cdots, X_{m})$ satisfy the conditions of
Theorem \ref{thm1}. Then we have
\begin{equation}\label{3-6}
  \lambda_{k}\geq C\cdot k^{\frac{2\epsilon_{0}}{n}},~~\forall k\geq 1,
\end{equation}
where the positive constant $C$ depends on vector fields $X$ and
$\Omega$, and $\epsilon_{0}$ is a positive constant in Proposition
\ref{subelliptic-estimate}.
\end{proposition}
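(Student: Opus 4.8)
The plan is to obtain \eqref{3-6} from the eigenvalue min-max principle combined with the sub-elliptic gain of regularity in Proposition \ref{subelliptic-estimate} and the classical Sobolev embedding $H^{\epsilon_0}(\mathbb{R}^n)\hookrightarrow L^{q}(\mathbb{R}^n)$ with $q=\frac{2n}{n-2\epsilon_0}$ (we may assume $0<\epsilon_0<1$, so in particular $2\epsilon_0<n$). The point is that although $H^{\epsilon_0}$ is a fractional Sobolev space, it still enjoys the usual Sobolev inequality on $\mathbb{R}^n$, and hence on $\Omega$ after extension by zero. First I would fix $k$ and let $\phi_1,\dots,\phi_k$ be the first $k$ Dirichlet eigenfunctions, which by Derridj's regularity lie in $C^\infty(\overline{\Omega})$ and form an $L^2(\Omega)$-orthonormal system. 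For any $u=\sum_{j=1}^{k}c_j\phi_j$ in the span $E_k=\vspan\{\phi_1,\dots,\phi_k\}$ we have the two identities $\|u\|_{L^2(\Omega)}^2=\sum c_j^2$ and $\|Xu\|_{L^2(\Omega)}^2=\sum c_j^2\lambda_j\le\lambda_k\|u\|_{L^2(\Omega)}^2$.

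Next I would combine Proposition \ref{subelliptic-estimate} with the fractional Sobolev embedding to get, for all $u\in E_k$,
\begin{equation}
\label{3-7aux}
\|u\|_{L^{q}(\Omega)}^2\le C\|u\|_{H^{\epsilon_0}(\mathbb{R}^n)}^2\le C\left(\|Xu\|_{L^2(\Omega)}^2+\|u\|_{L^2(\Omega)}^2\right)\le C(\lambda_k+1)\|u\|_{L^2(\Omega)}^2,
\end{equation}
where $q=\frac{2n}{n-2\epsilon_0}>2$. On the other hand, an elementary argument bounds $\|u\|_{L^2(\Omega)}$ from above in terms of $\|u\|_{L^q(\Omega)}$ using the pointwise reproducing structure of $E_k$: for fixed $x$ the map $u\mapsto u(x)$ is a linear functional on $E_k$ represented by the kernel $e(x,\cdot)=\sum_{j=1}^{k}\phi_j(x)\phi_j(\cdot)$, so $|u(x)|\le\|u\|_{L^2(\Omega)}\left(\sum_{j=1}^{k}\phi_j(x)^2\right)^{1/2}$, and integrating the $L^q$-into-$L^2$ type inequality, or more directly applying Hölder on $\Omega$ and the bound $\int_\Omega\sum_j\phi_j(x)^2\,dx=k$, yields $\|u\|_{L^2(\Omega)}^2\le |\Omega|^{1-2/q}\|u\|_{L^q(\Omega)}^2$ only when $q\ge 2$ in the wrong direction; instead the correct route is to test \eqref{3-7aux} against a judicious $u$. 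Concretely, I would apply \eqref{3-7aux} with the ``diagonal'' choice obtained by summing: integrating $|e(x,x)|=\sum_j\phi_j(x)^2$ is awkward, so the cleaner device is the standard one used in Li--Yau / Kröger arguments — bound $\sum_{j=1}^k\phi_j(x)^2$ pointwise. Applying \eqref{3-7aux} to each $\phi_j$ is too lossy; instead one uses that for the function $F(x)=\sum_{j=1}^k\phi_j(x)^2$ we have $\int_\Omega F\,dx=k$ and, from \eqref{3-7aux} applied with Bessel-type orthogonality, $\int_\Omega F(x)^{q/2}\,dx\le \big(C(\lambda_k+1)\big)^{q/2}\cdot(\text{const})\cdot k$; then Hölder's inequality $k=\int_\Omega F\le |\Omega|^{1-2/q}\big(\int_\Omega F^{q/2}\big)^{2/q}$ gives $k\le |\Omega|^{1-2/q}\big(C(\lambda_k+1)\big)\,(\text{const})\,k^{2/q}$, hence $k^{1-2/q}\le C'(\lambda_k+1)$. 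Since $1-\frac{2}{q}=\frac{2\epsilon_0}{n}$, this is exactly $\lambda_k\ge C\,k^{2\epsilon_0/n}-1$, and absorbing the additive constant into $C$ (valid since $\lambda_k\to\infty$, so for large $k$ the power term dominates, and small $k$ are handled by $\lambda_1>0$) yields \eqref{3-6}.

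The step I expect to be the main obstacle is making rigorous the passage from the embedding \eqref{3-7aux} for individual functions in $E_k$ to the global counting inequality involving $F(x)=\sum_{j=1}^k\phi_j(x)^2$: one must verify that the $L^{q/2}$-norm of $F$ is controlled by $C(\lambda_k+1)\cdot k$, which uses the $L^2$-orthonormality of the $\phi_j$ together with duality — writing $\|F\|_{L^{q/2}(\Omega)}=\sup\{\int_\Omega F\psi:\|\psi\|_{L^{(q/2)'}}\le 1\}$ and, for each such $\psi$, estimating $\int_\Omega\psi\sum_j\phi_j^2=\sum_j\langle \psi\phi_j,\phi_j\rangle\le\sum_j\|\psi\phi_j\|_{L^{q'}}\|\phi_j\|_{L^q}$, then using \eqref{3-7aux} to bound $\|\phi_j\|_{L^q}^2\le C(\lambda_j+1)\le C(\lambda_k+1)$ and a Hölder bound on $\|\psi\phi_j\|_{L^{q'}}$. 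Carefully chasing these exponents (note $q'=\frac{q}{q-1}$ and $(q/2)'=\frac{q}{q-2}$) and checking $2\epsilon_0<n$ ensures the argument closes; the remaining computations are routine and I would not grind through them here.
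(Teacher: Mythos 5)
Your overall strategy (sub-elliptic gain $H^1_{X,0}\hookrightarrow H^{\epsilon_0}\hookrightarrow L^q$, then a counting argument on the spectral density $F(x)=\sum_{j\le k}\phi_j(x)^2$) is reasonable in spirit, but the step you flag as "the main obstacle" is a genuine gap, and the computations you defer are not routine — they fail. The argument you sketch for $\int_\Omega F^{q/2}\,dx\le C\,M^q k$ (with $M^2=C(\lambda_k+1)$) is a term-by-term Hölder/duality estimate: $\int_\Omega F\psi=\sum_j\langle\psi\phi_j,\phi_j\rangle\le\sum_j\|\psi\|_{L^{(q/2)'}}\|\phi_j\|_{L^q}^2\le k M^2$ for $\|\psi\|_{L^{(q/2)'}}\le 1$. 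This never uses the orthogonality of the $\phi_j$ and therefore can only produce $\|F\|_{L^{q/2}}\le kM^2$, i.e.\ a full factor of $k$, not $k^{2/q}$. Feeding that into the final Hölder step $k=\int_\Omega F\le|\Omega|^{1-2/q}\|F\|_{L^{q/2}}$ gives $k\le C(\lambda_k+1)k$, which is vacuous. (Your alternative splitting $\|\psi\phi_j\|_{L^{q'}}\le\|\psi\|_{L^r}\|\phi_j\|_{L^2}$ forces $r=\tfrac{2q}{q-2}=2(q/2)'>(q/2)'$, a norm of $\psi$ that is \emph{not} controlled by $\|\psi\|_{L^{(q/2)'}}\le1$ on a bounded domain, so that route does not close either.) The improved bound $\|F\|_{L^{q/2}}\lesssim M^2k^{2/q}$ is true in model cases but is a theorem about orthonormal systems (it amounts to upgrading the $L^2\to L^q$ bound for the rank-$k$ projection to a Schatten-class bound, in the spirit of the Frank--Lewin--Lieb--Seiringer / Frank--Sabin duality principle); it cannot be obtained by "chasing exponents."

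The paper avoids this difficulty entirely by running the Li--Yau argument on the Fourier side. It takes $\Phi(x,y)=\sum_{j=1}^k\phi_j(x)\phi_j(y)$ and its partial Fourier transform $\hat\Phi(z,y)$ in $x$; Bessel's inequality applied to the exponentials $e^{-ix\cdot z}$ gives the \emph{uniform} bound $\int_\Omega|\hat\Phi(z,y)|^2dy\le(2\pi)^{-n}|\Omega|$ together with total mass $\int_{\mathbb{R}^n}\int_\Omega|\hat\Phi|^2=k$, while Proposition \ref{subelliptic-estimate} and Plancherel give $\int_{\mathbb{R}^n}\int_\Omega|z|^{2\epsilon_0}|\hat\Phi(z,y)|^2dydz\le C\sum_{j\le k}\lambda_j$. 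The bathtub-type Lemma \ref{Chen-luo2} then yields $k\le C\bigl(\sum_{j\le k}\lambda_j\bigr)^{n/(n+2\epsilon_0)}$, hence $\lambda_k\ge Ck^{2\epsilon_0/n}$. This is where orthogonality is exploited quantitatively — through the uniform bound on $f(z)=\int_\Omega|\hat\Phi(z,y)|^2dy$ — and it is the ingredient your real-space version is missing. To repair your proof you would either need to import a Schatten-norm strengthening of \eqref{3-7aux}, or switch to the Fourier-side argument.
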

Our proof of Proposition \ref{Pro3-2} is inspired by Chen and Luo's approach in \cite{chenluo} and the work of Li and Yau in \cite{Yau1983}. We need several lemmas to prove Proposition \ref{Pro3-2}.

\begin{lemma}
\label{Chen-luo1}
Let $\{\phi_{j}\}_{j=1}^{k}$ be the set of orthonormal eigenfunctions corresponding to the Dirichlet eigenvalues $\{\lambda_{j}\}_{j=1}^{k}$. Define
\[ \Phi(x,y):=\sum_{j=1}^{k}\phi_{j}(x)\phi_{j}(y). \]
Then we have
\[ \int_{\Omega}\int_{\mathbb{R}^n}|\hat{\Phi}(z,y)|^2dzdy =k,~~\mbox{and}~~ \int_{\Omega}|\hat{\Phi}(z,y)|^2dy\leq (2\pi)^{-n}|\Omega|, \]
where $\hat{\Phi}(z,y)$ is the partial Fourier transformation of $\Phi(x,y)$ in the $x$-variable
\[ \hat{\Phi}(z,y)=(2\pi)^{-\frac{n}{2}}\int_{\mathbb{R}^n}\Phi(x,y)e^{-ix\cdot z}dx. \]

\end{lemma}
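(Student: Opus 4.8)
The plan is to reduce both claims to the Plancherel identity in the $x$-variable together with Bessel's inequality for the orthonormal system $\{\phi_j\}_{j=1}^k\subset L^2(\Omega)$. Since each $\phi_j\in C^\infty(\overline{\Omega})$ vanishes on $\partial\Omega$, its extension by zero to $\mathbb{R}^n$ lies in $L^2(\mathbb{R}^n)$ with compact support, so every Fourier transform below is well-defined and the finite sums commute freely with the integrals. I would first record the factorization
\[
\hat{\Phi}(z,y)=(2\pi)^{-n/2}\int_{\mathbb{R}^n}\Big(\sum_{j=1}^k\phi_j(x)\phi_j(y)\Big)e^{-ix\cdot z}\,dx=\sum_{j=1}^k\hat{\phi}_j(z)\,\phi_j(y),\qquad \hat{\phi}_j(z):=(2\pi)^{-n/2}\int_\Omega\phi_j(x)e^{-ix\cdot z}\,dx,
\]
which just pulls the $y$-dependent factor out of the $x$-integral.

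For the identity $\int_\Omega\int_{\mathbb{R}^n}|\hat{\Phi}(z,y)|^2\,dz\,dy=k$, I would fix $y$ and apply Plancherel in $z$: since the Fourier transform is unitary in the chosen normalization, $\int_{\mathbb{R}^n}\hat{\phi}_j(z)\overline{\hat{\phi}_\ell(z)}\,dz=\langle\phi_j,\phi_\ell\rangle_{L^2(\Omega)}=\delta_{j\ell}$, so expanding $|\hat{\Phi}(z,y)|^2$ and integrating in $z$ leaves $\sum_{j=1}^k\phi_j(y)^2$. Integrating this over $y\in\Omega$ and using $\|\phi_j\|_{L^2(\Omega)}=1$ gives exactly $k$.

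For the bound $\int_\Omega|\hat{\Phi}(z,y)|^2\,dy\le(2\pi)^{-n}|\Omega|$, I would fix $z$ instead; expanding and using orthonormality of $\{\phi_j\}$ in $L^2(\Omega)$ yields $\int_\Omega|\hat{\Phi}(z,y)|^2\,dy=\sum_{j=1}^k|\hat{\phi}_j(z)|^2$. The one point to notice is that $\hat{\phi}_j(z)=\langle\phi_j,e_z\rangle_{L^2(\Omega)}$ where $e_z(x):=(2\pi)^{-n/2}e^{ix\cdot z}$ has constant modulus on $\Omega$, so $\|e_z\|_{L^2(\Omega)}^2=(2\pi)^{-n}|\Omega|$; Bessel's inequality for the finite orthonormal family $\{\phi_j\}_{j=1}^k$ then bounds $\sum_{j=1}^k|\hat{\phi}_j(z)|^2$ by $\|e_z\|_{L^2(\Omega)}^2$, which is the assertion. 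I do not expect any genuine obstacle here: the whole argument is bookkeeping with Plancherel and Bessel, and the only care needed is the (routine) justification that finite sums of compactly supported $L^2$ functions may be interchanged with the Fourier integrals and the $L^2$ pairings.
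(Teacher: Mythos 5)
Your proof is correct; the paper itself does not reprove this lemma but simply cites Lemma 3.1 of Chen--Luo, whose argument is exactly the Plancherel-plus-Bessel computation you give (writing $\hat{\Phi}(z,y)=\sum_{j}\hat{\phi}_j(z)\phi_j(y)$, using Plancherel in $z$ and orthonormality in $y$ for the first identity, and Bessel's inequality against $e_z(x)=(2\pi)^{-n/2}e^{ix\cdot z}$ with $\|e_z\|_{L^2(\Omega)}^2=(2\pi)^{-n}|\Omega|$ for the second). There are no gaps.
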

\begin{proof}
See Lemma 3.1 in \cite{chenluo}.
\end{proof}
\begin{lemma}
\label{Chen-luo2}
Let $f$ be a real-valued function defined on $\mathbb{R}^n$ with $0\leq f\leq M_{1}$. If
\[ \int_{\mathbb{R}^n}|z|^{2\epsilon_{0}}f(z)dz\leq M_{2}, \]
with $\epsilon_{0}>0$, then we have the following inequality,
\[ \int_{\mathbb{R}^n}f(z)dz\leq \left(\frac{n+2\epsilon_{0}}{n}\right)^{\frac{n}{n+2\epsilon_{0}}}(M_{1}B_{n})^{\frac{2\epsilon_{0}}{n+2\epsilon_{0}}}M_{2}^{\frac{n}{n+2\epsilon_{0}}}, \]
where $B_{n}$ is the volume of the unit ball in $\mathbb{R}^n$.
\end{lemma}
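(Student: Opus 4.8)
The plan is to treat this as a classical "bathtub principle" / rearrangement problem. We are given a function $f$ on $\mathbb{R}^n$ with pointwise bound $0\le f\le M_1$ and a weighted integral constraint $\int_{\mathbb{R}^n}|z|^{2\epsilon_0}f(z)\,dz\le M_2$, and we wish to bound $\int_{\mathbb{R}^n}f(z)\,dz$ from above. The intuition is that, to maximize the mass $\int f$ subject to the weighted constraint, one should put $f$ equal to its maximal value $M_1$ on the region where the weight $|z|^{2\epsilon_0}$ is smallest, i.e. on a ball $B(0,R)$ centered at the origin, and set $f=0$ outside. So the extremal configuration is $f^\ast = M_1\,\mathbf{1}_{B(0,R)}$ for an appropriate radius $R$, and the inequality we want should follow by comparing $f$ to $f^\ast$.

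The key steps, in order, would be: \textbf{(1)} Fix any radius $R>0$ (to be optimized at the end) and split $\int_{\mathbb{R}^n}f\,dz = \int_{|z|\le R}f\,dz + \int_{|z|>R}f\,dz$. \textbf{(2)} For the inner part, use the crude bound $f\le M_1$ to get $\int_{|z|\le R}f\,dz\le M_1 B_n R^n$, where $B_n$ is the volume of the unit ball. \textbf{(3)} For the outer part, exploit that $|z|>R$ forces $|z|^{2\epsilon_0}>R^{2\epsilon_0}$, hence $1< R^{-2\epsilon_0}|z|^{2\epsilon_0}$ there, and so $\int_{|z|>R}f\,dz\le R^{-2\epsilon_0}\int_{|z|>R}|z|^{2\epsilon_0}f\,dz\le R^{-2\epsilon_0}M_2$. \textbf{(4)} Combine to get $\int_{\mathbb{R}^n}f\,dz\le M_1 B_n R^n + M_2 R^{-2\epsilon_0}$ for every $R>0$. \textbf{(5)} Minimize the right-hand side over $R>0$: differentiating, the optimal radius satisfies $n M_1 B_n R^{n-1} = 2\epsilon_0 M_2 R^{-2\epsilon_0-1}$, i.e. $R^{n+2\epsilon_0} = \tfrac{2\epsilon_0 M_2}{n M_1 B_n}$. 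Substituting this value back in and simplifying the algebra (collecting the powers of $M_1 B_n$ and $M_2$, which come out to exponents $\tfrac{2\epsilon_0}{n+2\epsilon_0}$ and $\tfrac{n}{n+2\epsilon_0}$ respectively) yields exactly the claimed bound $\left(\tfrac{n+2\epsilon_0}{n}\right)^{\frac{n}{n+2\epsilon_0}}(M_1 B_n)^{\frac{2\epsilon_0}{n+2\epsilon_0}}M_2^{\frac{n}{n+2\epsilon_0}}$, where one checks the numerical constant reduces to $(1+\tfrac{2\epsilon_0}{n})^{-\frac{2\epsilon_0}{n+2\epsilon_0}} + (1+\tfrac{2\epsilon_0}{n})^{\frac{n}{n+2\epsilon_0}}\cdot\tfrac{n}{2\epsilon_0}\cdot\tfrac{2\epsilon_0}{n} = \left(\tfrac{n+2\epsilon_0}{n}\right)^{\frac{n}{n+2\epsilon_0}}$ after factoring.

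This argument is entirely elementary and I do not anticipate a genuine obstacle; the only place requiring care is the bookkeeping in step (5), where one must substitute $R^{n+2\epsilon_0}=\tfrac{2\epsilon_0 M_2}{nM_1B_n}$ into both $M_1 B_n R^n$ and $M_2 R^{-2\epsilon_0}$, observe that the two resulting terms are proportional (with ratio $\tfrac{2\epsilon_0}{n}$), sum them, and verify that the combined constant collapses to the stated clean form. One should also note at the outset that if $M_2=0$ the constraint forces $f\equiv 0$ a.e. and the inequality is trivial, and implicitly that $M_2<\infty$ (otherwise the statement is vacuous), so we may assume $0<M_2<\infty$ and $M_1>0$ so that the optimal $R$ is well-defined and positive. (If $M_1 = 0$ then $f\equiv 0$ and there is nothing to prove.)
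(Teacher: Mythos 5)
Your intuition in the preamble (the extremizer is $M_1\mathbf{1}_{B(0,R)}$, a bathtub-principle configuration) is exactly the paper's proof, but the argument you then actually run — split at radius $R$, bound the two pieces separately, and optimize over $R$ — does not prove the stated inequality. The sum $M_1B_nR^n+M_2R^{-2\epsilon_0}$, minimized at $R^{n+2\epsilon_0}=\tfrac{2\epsilon_0 M_2}{nM_1B_n}$, evaluates to $(M_1B_n)^{\frac{2\epsilon_0}{n+2\epsilon_0}}M_2^{\frac{n}{n+2\epsilon_0}}\bigl[(\tfrac{2\epsilon_0}{n})^{\frac{n}{n+2\epsilon_0}}+(\tfrac{n}{2\epsilon_0})^{\frac{2\epsilon_0}{n+2\epsilon_0}}\bigr]$, and the bracketed constant equals $(\tfrac{n+2\epsilon_0}{n})^{\frac{n}{n+2\epsilon_0}}\cdot(\tfrac{n+2\epsilon_0}{2\epsilon_0})^{\frac{2\epsilon_0}{n+2\epsilon_0}}$, which is \emph{strictly larger} than the claimed constant $(\tfrac{n+2\epsilon_0}{n})^{\frac{n}{n+2\epsilon_0}}$. (Sanity check with $n=1$, $2\epsilon_0=1$, $M_1B_n=M_2=1$: your method gives $\min_R(R+R^{-1})=2$, while the lemma claims $\sqrt2$.) The algebraic identity you assert at the end of step (5) is false: its second term already equals the right-hand side, so the identity would force the first term to vanish. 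The loss comes from step (3): discarding the weighted mass of $f$ \emph{inside} $B(0,R)$ and using the full budget $M_2$ only for the exterior double-counts the constraint.

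The paper's proof repairs exactly this by comparing $f$ to $g=M_1\mathbf{1}_{B(0,R)}$ with $R$ chosen so that $\int|z|^{2\epsilon_0}g\,dz=M_2$ (i.e. the extremizer saturates the constraint), and then using the single pointwise inequality $(|z|^{2\epsilon_0}-R^{2\epsilon_0})(f(z)-g(z))\ge 0$ — valid because $f\le M_1=g$ inside the ball and $f\ge 0=g$ outside. Integrating gives $R^{2\epsilon_0}\int(f-g)\le\int|z|^{2\epsilon_0}(f-g)\le 0$, hence $\int f\le\int g=M_1B_nR^n$, with no second term at all; substituting $R^{n+2\epsilon_0}=\tfrac{(n+2\epsilon_0)M_2}{nM_1B_n}$ yields precisely the stated constant. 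Your weaker bound would in fact still suffice for the application in Proposition 3.2 (only the power of $k$ matters there), but as a proof of the lemma as stated it has a genuine gap; to close it you should replace steps (1)–(4) by the comparison argument above.
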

\begin{proof}
First, we can choose $R$ such that
\[ \int_{\mathbb{R}^n}|z|^{2\epsilon_{0}}g(z)dz=M_{2},\]
where
\[ g(z)=\left\{
          \begin{array}{ll}
            M_{1}, & \hbox{$|z|<R$;} \\[2mm]
            0, & \hbox{$|z|\geq R$.}
          \end{array}
        \right.\]
Then $(|z|^{2\epsilon_{0}}-R^{2\epsilon_{0}})(f(z)-g(z))\geq 0$. Hence we get
\begin{equation}\label{3-7}
  R^{2\epsilon_{0}}\int_{\mathbb{R}^n}(f(z)-g(z))dz\leq \int_{\mathbb{R}^n}|z|^{2\epsilon_{0}}(f(z)-g(z))dz\leq 0.
\end{equation}
Note that
 \begin{equation}\label{3-8}
 M_{2}=\int_{\mathbb{R}^n}|z|^{2\epsilon_{0}}g(z)dz=M_{1}\int_{0}^{R}r^{n-1+2\epsilon_{0}}\omega_{n-1}dr=\frac{M_{1}\omega_{n-1}R^{n+2\epsilon_{0}}}{n+2\epsilon_{0}},
\end{equation}
where $\omega_{n-1}$ is the area of the unit sphere in $\mathbb{R}^n$. By the definition of $g(z)$, we know
 \begin{equation}\label{3-9}
  \int_{\mathbb{R}^n}g(z)dz=M_{1}B_{n}R^{n},
\end{equation}
 where $B_{n}$ is the volume of the unit ball in $\mathbb{R}^n$.\par
Since $nB_{n}=\omega_{n-1}$, then \eqref{3-7},\eqref{3-8} and \eqref{3-9} give
\[ \int_{\mathbb{R}^n}f(z)dz\leq \int_{\mathbb{R}^n}g(z)dz\leq\left(\frac{n+2\epsilon_{0}}{n}\right)^{\frac{n}{n+2\epsilon_{0}}}(M_{1}B_{n})^{\frac{2\epsilon_{0}}{n+2\epsilon_{0}}}M_{2}^{\frac{n}{n+2\epsilon_{0}}}. \]
\end{proof}

Now, we can prove Proposition \ref{Pro3-2}.

\begin{proof}[Proof of Proposition \ref{Pro3-2} ]
For $\Phi(x,y)=\sum_{j=1}^{k}\phi_{j}(x)\phi_{j}(y)$, we know that $\Phi(x,y)\in H_{X,0}^{1}(\Omega)$ with respect to $x$. By Proposition \ref{subelliptic-estimate} we can deduce that
\begin{equation}\label{3-10}
  \left\||\nabla|^{\epsilon_{0}}u\right\|_{L^2(\Omega)}^2\leq C\|Xu\|_{L^2(\Omega)}^2~~\mbox{for all}~~u\in H_{X,0}^{1}(\Omega),
\end{equation}
where $\nabla=(\partial_{x_{1}},\ldots,\partial_{x_{n}})$, $|\nabla|^{\epsilon_{0}}$ is a pseudo-differential operator with the symbol $|\xi|^{\epsilon_{0}}$,  $C>0$ is a constant depends on $X$ and $\Omega$, and $\epsilon_{0}$ is a positive constant in Proposition \ref{subelliptic-estimate}.
Then, by using Placherel's formula, we have
\begin{equation}
\label{3-11}
\int_{\mathbb{R}^n}\int_{\Omega}|z|^{2\epsilon_{0}}\left|\hat{\Phi}(z,y)\right|^2dydz=\int_{\mathbb{R}^n}\int_{\Omega}\left||\nabla|^{\epsilon_{0}}\Phi(x,y)  \right|^2dydx=\int_{\Omega}\int_{\Omega}\left||\nabla|^{\epsilon_{0}}\Phi(x,y)  \right|^2dydx.
\end{equation}
Combining \eqref{3-10} and \eqref{3-11}, we get
\begin{equation}
\label{3-12}
\int_{\Omega}\int_{\Omega}\left||\nabla|^{\epsilon_{0}}\Phi(x,y)  \right|^2dydx\leq C\int_{\Omega}\int_{\Omega}|X_{x}\Phi(x,y)|^2dxdy.
 \end{equation}
On the other hand, we can deduce that
\begin{align}
\label{3-13}
\int_{\Omega}\int_{\Omega}|X_{x}\Phi(x,y)|^2dxdy&=\int_{\Omega}\left(\sum_{i=1}^{m}\int_{\Omega}\left|\sum_{j=1}^{k}(X_{i}\phi_{j}(x))\phi_{j}(y)  \right|^2dx\right)dy \notag\\
&=\sum_{i=1}^{m}\left(\int_{\Omega}\sum_{j=1}^{k}|X_{i}\phi_{j}(x)|^2 \right)dx\notag\\
&=\sum_{j=1}^{k}(X\phi_{j},X\phi_{j})_{L^2(\Omega)}=\sum_{j=1}^{k}(-\triangle_{X}\phi_{j},\phi_{j})_{L^2(\Omega)}\notag\\
&=\sum_{j=1}^{k}\lambda_{j}.
\end{align}
It follows from estimates \eqref{3-11}, \eqref{3-12} and \eqref{3-13} that
\[ \int_{\mathbb{R}^n}\int_{\Omega}|z|^{2\epsilon_{0}}\left|\hat{\Phi}(z,y)\right|^2dydz\leq C\sum_{j=1}^{k}\lambda_{j}. \]
Now we take
\[ f(z)=\int_{\Omega}\left|\hat{\Phi}(z,y)\right|^2dy,\quad M_{1}=(2\pi)^{-n}|\Omega|,\quad M_{2}=C\sum_{j=1}^{k}\lambda_{j}. \]
Then, due to Lemma \ref{Chen-luo1} and Lemma \ref{Chen-luo2}, we have
\[ k\leq \left(\frac{n+2\epsilon_{0}}{n}\right)^{\frac{n}{n+2\epsilon_{0}}}[(2\pi)^{-n}|\Omega|B_{n}]^{\frac{2\epsilon_{0}}{n+2\epsilon_{0}}}\left(C\sum_{j=1}^{k}\lambda_{j}\right)^{\frac{n}{n+2\epsilon_{0}}}. \]
Consequently,
\[ \sum_{j=1}^{k}\lambda_{j}\geq C\cdot k^{1+\frac{2\epsilon_{0}}{n}}. \]
Therefore, by $\lambda_{i}\leq \lambda_{i+1}$ we have
\[ \lambda_{k}\geq C\cdot k^{\frac{2\epsilon_{0}}{n}}~~\mbox{for all}~~k\geq 1. \]
\end{proof}
\section{Sub-elliptic Dirichlet heat kernel}
    We construct the sub-elliptic Dirichlet heat kernel of $\triangle_{X}$ in this section. Our approach is
     similar to that in Li's work \cite{Li2012} in the classical case.
The sub-elliptic Dirichlet heat kernel $h_D(x,y,t)$ of $\triangle_{X}$ is the fundamental solution of the degenerate heat operator $\partial_{t}-\triangle_{X}$. That is, for any fixed point $y\in \Omega$, $h_D(x,y,t)$ is the solution of
\begin{equation}\label{4-1}
\left(\frac{\partial}{\partial t}-\triangle_{X}\right)h_D(x,y,t)=0 \quad \mbox{for all}~~ (x,t)\in\Omega\times (0,+\infty),
\end{equation}
and satisfies following properties
\begin{equation}\label{4-2}
h_{D}(x,y,t)\in C^{\infty}(\Omega\times\Omega\times\mathbb{R}^{+})\cap C(\overline\Omega\times\overline\Omega\times\mathbb{R}^{+})~~\mbox{and}~~ h_{D}(x,y,t)\in H_{X,0}^{1}(\Omega).
\end{equation}

\begin{equation}\label{4-3}
\lim_{t\to 0^{+}}\int_{\Omega}h_D(x,y,t)\varphi(y)dy=\varphi(x),~~\mbox{for all } \varphi\in C_{0}^{\infty}(\Omega).
\end{equation}
\begin{equation}\label{4-4}
 h_D(x,y,t)=0 \mbox{ when } x,~y\in\partial\Omega,\qquad h_{D}(x,y,t)=h_{D}(y,x,t).
\end{equation}

 \begin{equation}\label{4-5}
h_D(x,y,t+s)=\int_{\Omega}h_D(x,z,t)h_D(z,y,s)dz,~~ \mbox{for all } s, t>0.
\end{equation}

\begin{equation}\label{4-6}
\begin{aligned}
&h_D(x,y,t)>0,~~\mbox{for all } (x,y,t)\in \Omega\times\Omega\times(0,+\infty),\\
&\int_{\Omega}h_D(x,y,t)dy\leq 1,~~ \mbox{for all } (x,t)\in\Omega\times(0,+\infty).
\end{aligned}
\end{equation}

\par
Since the Dirichlet heat kernel $h_D(x,y,t)$ is the fundamental solution of $\frac{\partial}{\partial t}-\triangle_{X}$. Thus for a function $f_0(x)\in L^2(\Omega)$,
the function
\begin{equation}\label{4-7}
f(x,t)=\int_{\Omega}h_D(x,y,t)f_{0}(y)dy
\end{equation}
will solve the degenerate heat equation
\begin{equation}\label{4-8}
\left(\triangle_{X}-\frac{\partial}{\partial t}\right)f(x,t)=0, \mbox{ for }(x,t)\in \Omega\times(0,+\infty),
\end{equation}
and satisfies
\begin{equation}\label{4-9}
\lim_{t\to 0+}f(x,t)=f_{0}(x)~~ \mbox{in}~~ L^2(\Omega),~~ \mbox{ and }~~ f(x,t)=0 \mbox{ on }\partial\Omega\times (0,+\infty).
\end{equation}
\par
Recall that the sequence of eigenfunctions
$\{\phi_{i}\}_{i=1}^{\infty}$ is  a standard orthogonal basis in
$L^2(\Omega)$, that implies that a function $f_{0}\in L^2(\Omega)$
can be written in the form
\[ f_{0}=\sum_{k=1}^{\infty}a_{k}\phi_{k}(x) \mbox{ with } a_{k}=\int_{\Omega}f_{0}\phi_{k}dx. \]
Formally, the function $f(x,t)$ can be given by
\begin{equation}\label{4-10}
f(x,t)=\sum_{i=1}^{\infty}e^{-\lambda_{i}t}a_{i}\phi_{i}(x),
\end{equation}
which satisfies the \eqref{4-8} with initial-boundary condition \eqref{4-9}. Comparing \eqref{4-7} and \eqref{4-10}, we can deduce that the Dirichlet heat kernel of $\triangle_{X}$ on $\Omega$ can be defined as
\[ h_D(x,y,t)=\sum_{k=1}^{\infty}e^{-\lambda_{k}t}\phi_{k}(x)\phi_{k}(y). \]

    In fact, we have the following proposition.\par
\begin{proposition}\label{prop4-1}
Let  $X=(X_{1},\cdots,X_{m})$ with conditions the same as Theorem \ref{thm1}. Then the sub-elliptic operator $\triangle_{X}$ has a
 Dirichlet heat kernel $h_{D}(x,y,t)$ which is well defined on $\overline{\Omega}\times\overline{\Omega}\times(0,+\infty)$ by
\begin{equation}\label{4-11}
h_{D}(x,y,t)=\sum_{i=1}^{\infty}e^{-\lambda_{i}t}\phi_{i}(x)\phi_{i}(y).
\end{equation}
Furthermore, $h_{D}(x,y,t)$ is uniquely determined and satisfies
properties \eqref{4-1} to \eqref{4-6}.
\end{proposition}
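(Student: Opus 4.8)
The plan is to adopt the series \eqref{4-11} as the definition of $h_D$, verify that it converges well enough to be smooth and to satisfy \eqref{4-1}--\eqref{4-6}, and finally show that any kernel with these properties is unique. Everything rests on two quantitative facts already established: $\|\phi_i\|_\infty\le C_1\lambda_i^{\tilde\nu/4}$ (Proposition \ref{pro3-1}) and $\lambda_i\ge C i^{2\epsilon_0/n}\to+\infty$ (Proposition \ref{Pro3-2}), which together make $\sum_i e^{-\lambda_i t_0}\lambda_i^{s}$ converge for every $t_0>0$ and every exponent $s$. Hence $\sum_i e^{-\lambda_i t}|\phi_i(x)\phi_i(y)|\le C_1^2\sum_i e^{-\lambda_i t}\lambda_i^{\tilde\nu/2}$ converges uniformly on $\overline\Omega\times\overline\Omega\times[t_0,+\infty)$ for each $t_0>0$; since the terms are continuous on $\overline\Omega\times\overline\Omega\times(0,+\infty)$ and vanish when $x\in\partial\Omega$ or $y\in\partial\Omega$, this already shows that $h_D$ is well defined and continuous on $\overline\Omega\times\overline\Omega\times(0,+\infty)$, is symmetric, and satisfies the boundary part of \eqref{4-4}.

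For smoothness and the heat equation I would differentiate \eqref{4-11} termwise. A $t$-derivative of order $\ell$ of the $i$-th term introduces only a factor $\lambda_i^\ell$, while a classical spatial derivative is controlled by iterating the sub-elliptic estimate of Proposition \ref{Pro2-5} with $u=\phi_i$ and $\triangle_X\phi_i=-\lambda_i\phi_i$, which gives $\|\chi\phi_i\|_{H^M(\mathbb{R}^n)}\le C_M(1+\lambda_i)^{M/\epsilon}$ for any cut-off $\chi\in C_0^\infty(\Omega)$ and any $M$, followed by Sobolev embedding. Thus every differentiated series, including mixed $x$-, $y$-, $t$-derivatives, is dominated by a convergent series of the above type, uniformly on compact subsets of $\Omega\times\Omega\times(0,+\infty)$, so termwise differentiation is legitimate and $h_D\in C^\infty(\Omega\times\Omega\times(0,+\infty))$. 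Since $\triangle_X\phi_i=-\lambda_i\phi_i$, each term kills $\partial_t-\triangle_{X,x}$, hence so does $h_D$, which is \eqref{4-1}. For the $H^1_{X,0}$-membership in \eqref{4-2}: the $\phi_i$ are orthogonal in $H^1_{X,0}(\Omega)$ with $\|\phi_i\|^2_{H^1_X}=1+\lambda_i$, so for fixed $t>0$ and $y$ the partial sums of \eqref{4-11} are Cauchy in $H^1_{X,0}(\Omega)$ (tails bounded by $C_1^2\sum_i e^{-2\lambda_i t}\lambda_i^{\tilde\nu/2}(1+\lambda_i)$), and the limit lies in this closed subspace.

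For the initial condition \eqref{4-3}, given $\varphi\in C_0^\infty(\Omega)$ put $\hat\varphi_i=(\varphi,\phi_i)_{L^2}$; integrating by parts $N$ times gives $\hat\varphi_i=\lambda_i^{-N}\bigl((-\triangle_X)^N\varphi,\phi_i\bigr)_{L^2}$, so $|\hat\varphi_i|\le\lambda_i^{-N}\|(-\triangle_X)^N\varphi\|_{L^2}$, and together with $\|\phi_i\|_\infty\le C_1\lambda_i^{\tilde\nu/4}$ and Proposition \ref{Pro3-2} the series $\sum_i\hat\varphi_i\phi_i(x)$ converges absolutely and uniformly once $N$ is large, to a continuous function which equals $\varphi$ in $L^2$, hence everywhere. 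Since $\int_\Omega h_D(x,y,t)\varphi(y)\,dy=\sum_i e^{-\lambda_i t}\hat\varphi_i\phi_i(x)$ is dominated termwise, uniformly for $t\ge0$, by the summable sequence $|\hat\varphi_i|\,\|\phi_i\|_\infty$, letting $t\to0^+$ yields $\varphi(x)$. The semigroup identity \eqref{4-5} then follows by interchanging the $z$-integration with the double sum (justified by uniform convergence) and using $(\phi_i,\phi_j)_{L^2}=\delta_{ij}$.

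The substantive part is \eqref{4-6} together with uniqueness. For $0\le f_0\in C_0^\infty(\Omega)$, the function $f(x,t)=\int_\Omega h_D(x,y,t)f_0(y)\,dy$ is continuous on $\overline\Omega\times[0,+\infty)$ (by the uniform convergence in \eqref{4-3}), smooth and solving $(\partial_t-\triangle_X)f=0$ for $t>0$, zero on $\partial\Omega\times(0,\infty)$, and equal to $f_0\ge0$ at $t=0$; the weak maximum principle for $\partial_t-\triangle_X$, which has no zeroth-order term, forces $f\ge0$, and letting $f_0$ range over nonnegative test functions gives $h_D\ge0$. Strict positivity follows from the strong maximum principle for hypoelliptic parabolic operators (cf.\ \cite{Bony1969}) applied to the nonnegative solution $h_D(\cdot,y_0,\cdot)$, which is not identically zero because $\int_\Omega h_D(x,y_0,t)\varphi(x)\,dx\to\varphi(y_0)$ as $t\to0^+$. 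For $\int_\Omega h_D(x,y,t)\,dy\le1$, pick $\psi_k\in C_0^\infty(\Omega)$ with $0\le\psi_k\uparrow1$; each $f_k(x,t)=\int_\Omega h_D(x,y,t)\psi_k(y)\,dy$ satisfies $0\le f_k\le1$ by the weak maximum principle (parabolic-boundary data $\psi_k\le1$ and $0$), and $f_k\uparrow\int_\Omega h_D(x,y,t)\,dy$ by monotone convergence using $h_D\ge0$. Uniqueness is an energy estimate: if $\tilde h$ also satisfies \eqref{4-1}--\eqref{4-6}, then for $f_0\in C_0^\infty(\Omega)$ the difference $v=\int_\Omega(h_D-\tilde h)(\cdot,y,t)f_0(y)\,dy$ solves the degenerate heat equation with zero initial and lateral data, so $\frac{d}{dt}\tfrac12\|v(t)\|_{L^2}^2=(\triangle_X v,v)=-\|Xv\|_{L^2}^2\le0$ forces $v\equiv0$, and density of $C_0^\infty(\Omega)$ together with continuity give $h_D\equiv\tilde h$. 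I expect the main obstacle to be not a single estimate but making the maximum-principle steps for \eqref{4-6} fully rigorous in the degenerate setting — in particular invoking the weak and strong maximum principles for $\partial_t-\triangle_X$ correctly and checking the regularity of the auxiliary solutions $f$, $f_k$ up to the parabolic boundary.
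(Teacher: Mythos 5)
Your proposal is correct and follows the same overall skeleton as the paper (define $h_D$ by the eigenfunction series, establish uniform convergence from $\|\phi_i\|_\infty\le C_1\lambda_i^{\tilde\nu/4}$ and $\lambda_i\ge Ci^{2\epsilon_0/n}$, then verify \eqref{4-1}--\eqref{4-6} and uniqueness), but several individual steps are handled by genuinely different means. For smoothness, the paper observes that $h_D$ is a weak solution of $[\partial_t-\tfrac12(\triangle_X^x+\triangle_X^y)]u=0$ and invokes hypoellipticity of that operator, whereas you differentiate the series termwise using iterated applications of Proposition \ref{Pro2-5} to get polynomial-in-$\lambda_i$ bounds on $\|\chi\phi_i\|_{H^M}$; both work, and your route has the advantage of giving explicit quantitative control, at the cost of being slightly more laborious. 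For the initial condition \eqref{4-3}, the paper applies Corollary \ref{corollary2-1} to $u(x,t)=\int h_D(x,y,t)\varphi(y)\,dy-\varphi(x)$ and upgrades $L^2$-convergence to locally uniform convergence, while you integrate by parts $N$ times to get rapid decay of the coefficients $\hat\varphi_i=\lambda_i^{-N}((-\triangle_X)^N\varphi,\phi_i)$ and then dominate the series uniformly in $t$; your argument is arguably more elementary and self-contained. For uniqueness the paper uses the weak maximum principle, while you use an $L^2$ energy estimate; your version needs the small additional observation (which you omit but which follows from \eqref{4-3}, the bound $0\le h_D$, $\int h_D\,dy\le1$, and dominated convergence) that $\|v(\cdot,t)\|_{L^2}\to0$ as $t\to0^+$, not merely pointwise convergence. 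The positivity and mass-bound arguments via the weak and Bony strong maximum principles coincide with the paper's, and your concern about regularity up to the parabolic boundary is exactly the point the paper addresses by establishing $f\in C(\overline{U_T})\cap C^\infty(U_T)$ before applying the weak maximum principle.
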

\begin{proof}
 We begin by establishing the uniform convergence of the series \eqref{4-11}.
 By Proposition \ref{pro3-1} and recall that $\phi_{i}\in C^{\infty}(\overline{\Omega})$, we have, for $t>0$,
\begin{equation}\label{4-12}
 e^{-\lambda_{i}t}|\phi_{i}(x)|\cdot|\phi_{i}(y)|\leq e^{-\lambda_{i}t}\|\phi_{i}\|_{\infty}^2\leq C_{1}^2e^{-\lambda_{i}t}\lambda_{i}^{\frac{\tilde{\nu}}{2}}.
\end{equation}
Now, we use the inequality (cf. \cite{Chavel1984} Chapter VII)
\begin{equation}\label{4-13}
  e^{-z}z^{\alpha}\leq \alpha^{\alpha}e^{-\alpha} \qquad \mbox{for all }z>0,~\alpha>0.
\end{equation}
Putting $z=\frac{1}{2}\lambda_{i}t$, $\alpha=\frac{\tilde{\nu}}{2}$ into \eqref{4-13}, we get
\begin{equation}\label{4-14}
e^{-\lambda_{i}t}\lambda_{i}^{\frac{\tilde{\nu}}{2}}\leq \tilde{\nu}^{\frac{\tilde{\nu}}{2}}e^{-\frac{\tilde{\nu}}{2}}t^{-\frac{\tilde{\nu}}{2}}
e^{-\frac{1}{2}\lambda_{i}t}.
\end{equation}
Hence, \eqref{4-12} and \eqref{4-14} imply that
\begin{equation}\label{4-15}
 \sum_{i=1}^{\infty}e^{-\lambda_{i}t}|\phi_{i}(x)|\cdot|\phi_{i}(y)|\leq C_{1}^2\cdot\tilde{\nu}^{\frac{\tilde{\nu}}{2}}e^{-\frac{\tilde{\nu}}{2}}
t^{-\frac{\tilde{\nu}}{2}}\sum_{k=1}^{\infty}e^{-\frac{1}{2}\lambda_{k}t},
\end{equation}
where $C_{1}>0$ is a constant depending on $C$ and $\tilde\nu$. The explicit lower bound of Dirichlet eigenvalue $\lambda_{k}$ which is established in Proposition \ref{Pro3-2} allows us to obtain that
\begin{equation}\label{4-16}
\sum_{i=1}^{\infty}e^{-\lambda_{i}t}|\phi_{i}(x)|\cdot|\phi_{i}(y)|\leq C_{1}^2\cdot \tilde{\nu}^{\frac{\tilde{\nu}}{2}}e^{-\frac{\tilde{\nu}}{2}}t^{-\frac{\tilde{\nu}}{2}}
\sum_{k=1}^{\infty}e^{-\frac{C}{2}k^{\frac{2\epsilon_{0}}{n}}t},
\end{equation}
where $\epsilon_{0}$ and $C$ are positive constants in Proposition \ref{Pro3-2}. The estimate \eqref{4-16} implies the series \eqref{4-11} uniformly convergent on $\overline{\Omega}\times\overline{\Omega}\times[a,\infty)$ for any $a>0$. Thus the sub-elliptic Dirichlet heat kernel $h_D(x,y,t)=\sum_{i=1}^{\infty}e^{-\lambda_{i}t}\phi_{i}(x)\phi_{i}(y)$
is well-defined. Moreover, it can be clearly seen that $h_{D}(x,y,t)$
 satisfies \eqref{4-4}.\par
We denote $Sh_N(x,y,t)$ as the sum of the first $N$ terms of the series \eqref{4-11}, i.e.
\begin{equation}\label{4-17}
Sh_{N}(x,y,t):=\sum_{i=1}^{N}e^{-\lambda_{i}t}\phi_{i}(x)\phi_{i}(y).
\end{equation}
Since
\[ \int_{\Omega}(X\phi_{i}(x))\cdot(X\phi_{j}(x))dx=\delta_{ij}\lambda_{i}=
\left\{
\begin{array}{ll}
\lambda_{i}, & \hbox{$i=j$;} \\[3mm]
0, & \hbox{$i\neq j$.}
\end{array}
\right.
\]
Similarly, for any fixed $t>0$, we have
\begin{align*}
&\int_{\Omega}\left|X_{y}(Sh_{N+k}(x,y,t)-Sh_{N}(x,y,t))\right|^2dy\\
&\int_{\Omega}\left|X_{y}\left(\sum_{i=N+1}^{N+k}e^{-\lambda_{i}t}\phi_{i}(x)
\phi_{i}(y)\right) \right|^2dy\\
&=\int_{\Omega}\left|\sum_{i=N+1}^{N+k}e^{-\lambda_{i}t}\phi_{i}(x)X_{y}\phi_{i}(y) \right|^2dy\\
&=\int_{\Omega}\sum_{i,j=N+1}^{N+k}e^{-(\lambda_{i}+\lambda_{j})t}\phi_{i}(x)\phi_{j}(x)
[X_{y}\phi_{i}(y)\cdot X_{y}\phi_{j}(y)]dy\\
&=\sum_{i=N+1}^{N+k}e^{-2\lambda_{i}t}\lambda_{i}\phi_{i}^2(x)\to 0\quad (\mbox{for any }k\in \mathbb{N}^{+},\mbox{ as }N\to+\infty).
\end{align*}
Thus, it gives us that $Sh_{N}(x,y,t)\to h_{D}(x,y,t)$ uniformly as $N\to+\infty$ in $H_{X,0}^{1}(\Omega)$ for $t>0$. Consequently, for any fixed $(y,t)\in \Omega\times (0,+\infty)$, $h_{D}(x,y,t)\in H_{X,0}^{1}(\Omega)$ with respect to $x$.\par

 Furthermore, for a fixed point $y\in \Omega$  and $N\in\mathbb{Z}^{+}$, $u_{y,N}(x,t)=Sh_{N}(x,y,t)$ is a solution of the degenerate heat equation \eqref{4-1}. The uniform convergence of $u_{y,N}(x,t)$  implies that $h_{D}(x,y,t)$ is a weak solution of \eqref{4-1} with respect to $(x,t)$. Analogously, it is easy to verify that $h_{D}(x,y,t)$ is also a weak solution of equation $[\partial_{t}-\frac{1}{2}(\triangle_{X}^{x}+\triangle_{X}^{y})]u(x,y,t)=0$, since for each $N$, $Sh_{N}(x,y,t)$ is a solution of $[\partial_{t}-\frac{1}{2}(\triangle_{X}^{x}+\triangle_{X}^{y})]u(x,y,t)=0$. Then the hypo-ellipticity of $\partial_{t}-\frac{1}{2}(\triangle_{X}^{x}+\triangle_{X}^{y})$ implies that $h_{D}(x,y,t)\in C^{\infty}(\Omega\times\Omega\times(0,+\infty))$. Also, the uniform convergence of $Sh_{N}(x,y,t)$ on $\overline\Omega\times\overline\Omega\times [a,+\infty)$ for any $a>0$ gives $h_{D}(x,y,t)\in C(\overline\Omega\times\overline\Omega\times(0,+\infty))$.\par

Now recall that the sequence of Dirichlet eigenfunctions $\{\phi_{k}\}_{k=1}^{\infty}$ constitutes a standard orthogonal basis in $L^2(\Omega)$. Given a function $f_{0}(x)\in L^2(\Omega)$, we have
\[ f_{0}(x)=\sum_{i=1}^{\infty}a_{i}\phi_{i}(x)~~\mbox{in}~~ L^2(\Omega), \]
where $a_{i}=\int_{\Omega}f_{0}(y)\phi_{i}(y)dy $.
In terms  of Parseval's identity we know
\begin{equation}\label{4-18}
    \sum_{i=1}^{\infty}a_{i}^2= \|f_{0}\|_{L^2(\Omega)}^{2}<+\infty.
\end{equation}
Furthermore, for any $t>0$, we have
\begin{align*}
f(x,t)&=\int_{\Omega}h_{D}(x,y,t)f_{0}(y)dy\\
&=\int_{\Omega}\left(\sum_{i=1}^{\infty}e^{-\lambda_{i}t}\phi_{i}(x)\phi_{i}(y)\right)
\left(\sum_{j=1}^{\infty}a_{j}\phi_{j}(y)\right)dy\\
&=\sum_{i=1}^{\infty}e^{-\lambda_{i}t}a_{i}\phi_{i}(x)~~\mbox{in}~~ L^2(\Omega).
\end{align*}
Since
\[ \sum_{i=1}^{\infty}e^{-\lambda_{i}t}|a_{i}|\cdot|\phi_{i}(x)|\leq \|f_{0}\|_{L^2(\Omega)}\cdot\sum_{i=1}^{\infty}e^{-\lambda_{i}t}\cdot|\phi_{i}(x)|\leq C_{1}\|f_{0}\|_{L^2(\Omega)}\cdot\sum_{i=1}^{\infty}e^{-\lambda_{i}t}
\lambda_{i}^{\frac{\tilde\nu}{4}}. \]
Then by using similar approach as above, we know that $\sum_{i=1}^{\infty}e^{-\lambda_{i}t}a_{i}\phi_{i}(x)$ converges uniformly on $\overline\Omega\times [a,+\infty)$ for any $a>0$, which implies $f(x,t)$ is a weak solution of the degenerate heat equation \eqref{4-8} and agrees with the Dirichlet boundary condition in \eqref{4-9}. Moreover, the hypo-ellipticity of $\partial_{t}-\triangle_{X}$ tells us $f(x,t)\in C^{\infty}(\Omega\times(0,+\infty))\cap C(\overline\Omega\times(0,+\infty))$. \par
 In order to verify that $f(x,t)$ satisfies the initial condition in \eqref{4-9}, it suffices to prove that $f(x,t)=\int_{\Omega}h_{D}(x,y,t)f_{0}(y)dy\to f_{0}(x)$ as $t\to 0^{+}$ in $L^2(\Omega)$. It derives, in fact, that
\begin{align*}
\|f(x,t)-f_{0}(x)\|_{L^2(\Omega)}^{2}&=\left\|\sum_{i=1}^{\infty}(e^{-\lambda_{i}t}-1)
a_{i}\phi_{i}(x) \right\|_{L^2(\Omega)}^{2}\\
&=\sum_{i=1}^{\infty}a_{i}^2(1-e^{-\lambda_{i}t})^2.
\end{align*}
Thus  identity \eqref{4-18} implies that $\sum_{i=1}^{\infty}a_{i}^2(1-e^{-\lambda_{i}t})^2$ converges uniformly on $t\in[0,+\infty)$. Therefore, we obtain
\begin{equation*}
\lim_{t\to 0^{+}}\|f(x,t)-f_{0}(x)\|_{L^2(\Omega)}^{2}=\lim_{t\to 0^{+}}\sum_{i=1}^{\infty}a_{i}^2(1-e^{-\lambda_{i}t})^2=0,
\end{equation*}
which means that $f(x,t)$ allows the initial condition in \eqref{4-9}.\par
If we take $u(x,t)=\int_{\Omega}h_{D}(x,y,t)\varphi(y)dy-\varphi(x)$ for any  $\varphi\in C_{0}^{\infty}(\Omega)$,
 then for any $t>0$, we have $u(x,t)\in L^2(\Omega)\cap C^{\infty}(\Omega)$. Moreover, the symmetry of $h_{D}(x,y,t)$ in $x$ and $y$ gives
\begin{align*}
(\triangle_{X})^{k}u(x,t)&=\int_{\Omega}(\triangle_{X}^{x})^{k}h_{D}(x,y,t)\varphi(y)dy
-(\triangle_{X}^{x})^{k}\varphi(x)\\
&=\int_{\Omega}(\triangle_{X}^{y})^{k}h_{D}(x,y,t)\varphi(y)dy-(\triangle_{X}^{x})^{k}
\varphi(x)\\
&=\int_{\Omega}h_{D}(x,y,t)(\triangle_{X}^{y})^{k}\varphi(y)dy-(\triangle_{X}^{x})^{k}
\varphi(x).
\end{align*}
Thus, we know that $(\triangle_{X})^{k}u(x,t)\in L^2(\Omega)$ for $t>0$, and $k\in\mathbb{N}^{+} $. Meanwhile, by Corollary \ref{corollary2-1}, we have for any $\xi(x)\in C_{0}^{\infty}(\Omega)$,
\begin{align}\label{4-19}
\sup_{x\in\Omega}|\xi(x)u(x,t)|&\leq C\sum_{k=0}^{N}\|(\triangle_{X})^{k}u(x,t)\|_{L^2(\Omega)} \notag\\
&=C\sum_{k=0}^{N}\left\|\int_{\Omega}h_{D}(x,y,t)
(\triangle_{X}^{y})^{k}\varphi(y)dy-(\triangle_{X}^{x})^{k}\varphi(x)\right\|_{L^2(\Omega)}.
\end{align}
Hence from \eqref{4-7} and \eqref{4-9}, the estimate \eqref{4-19} shows that for any cut-off function $\xi(x)\in C_{0}^{\infty}(\Omega)$ we have
\begin{equation}\label{4-20}
\lim_{t\to 0^{+}}\sup_{x\in\Omega}\left|\xi(x)u(x,t)\right|=\lim_{t\to 0^{+}}\sup_{x\in\Omega}\left|\xi(x)\cdot\left(\int_{\Omega}h_{D}(x,y,t)\varphi(y)dy-
\varphi(x)\right) \right|=0.
\end{equation}
Since the cut-off function $\xi(x)$ is arbitrary, then for any given $x\in\Omega$, \eqref{4-20} gives that
\[\lim_{t\to 0^{+}}\int_{\Omega}h_{D}(x,y,t)\varphi(y)dy=\varphi(x).\]
This completes the proof of \eqref{4-3}.\par
Also, we have for $t>0$ and $s>0$,
\begin{align*}
\int_{\Omega}h_D(x,z,t)h_D(z,y,s)dz&=\int_{\Omega}\left(\sum_{i=1}^{\infty}
e^{-\lambda_{i}t}\phi_{i}(x)\phi_{i}(z) \right)\left(\sum_{j=1}^{\infty}e^{-\lambda_{j}s}\phi_{j}(z)\phi_{j}(y) \right)dz\\
 &=\sum_{i=1}^{\infty}e^{-\lambda_{i}(t+s)}\phi_{i}(x)\phi_{i}(y)\\
 &=h_D(x,y,t+s),
\end{align*}
which yields to \eqref{4-5}.\par
 Finally, we only need to verify \eqref{4-6} and the uniqueness of $h_{D}(x,y,t)$.\par
We firstly show that $h_{D}(x,y,t)\geq 0$. Actually, if there exists $(x_{0},y_{0},t_{0})\in (\Omega\times\Omega\times (0,+\infty))$ in which $ h_{D}(x_{0},y_{0},t_{0})<0$, then there exist $0<\delta<t_{0}$ and $\alpha>0$, such that $B(x_{0},\delta)\subset \Omega$, $B(y_{0},\delta)\subset \Omega$ and
 for each $(x,y,t)\in B(x_{0},\delta)\times B(y_{0},\delta)\times(t_0-\delta, t_0+\delta)$, we have
\[ h_{D}(x,y,t)<-\alpha<0. \]
Thus, we can find a function $f_{0}\in C_{0}^{\infty}(B(y_{0},\delta))$ with $0\leq f_{0}\leq 1$, such that
\[ f_{0}(y)=\left\{
              \begin{array}{ll}
                1, & \hbox{$y\in B(y_{0},\frac{\delta}{2})$ ;} \\[3mm]
                0, & \hbox{$y\in \Omega\setminus B(y_{0},\delta)$.}
              \end{array}
            \right.\]
Then
\[ f(x,t)=\int_{\Omega}h_{D}(x,y,t)f_{0}(y)dy =\int_{B(y_{0},\delta)}h_{D}(x,y,t)f_{0}(y)dy. \]
In particular, we have
\begin{equation}\label{4-21}
f(x_{0},t_{0})=\int_{B(y_{0},\delta)}h_{D}(x_{0},y,t_{0})f_{0}(y)dy<0.
\end{equation}
Given $U_{T}=\Omega\times(0,T]$ for some $T> t_{0}$. From above arguments, we can conclude that $f(x,t)\in C(\overline{U_{T}})\cap C^{\infty}(U_{T})$. Since $f(x,t)\geq 0$ in the parabolic boundary $\partial_{p}U_{T}=(\Omega\times\{0\})\cup(\partial\Omega\times[0,T])$, it implies that $f(x,t)\geq 0$ in $\Omega\times(0,T)$ according to the weak maximum principle for the degenerate parabolic equation (cf. Proposition 2.2 in  \cite{Lanconelli}, also see Proposition 3.6 in \cite{Brandolini}).
This is a contradiction with \eqref{4-21}. Hence, we obtain $h_{D}(x,y,t)\geq 0$ for $(x,y,t)\in \Omega\times\Omega\times(0,+\infty)$.\par
Secondly, we assume that  $h_{D}(x',y',t')=0$ for some $ (x',y',t')\in \Omega\times\Omega\times(0,+\infty)$. Since $u(x,t)=h_{D}(x,y',t)\in C^{\infty}(\Omega\times(0,+\infty))$ satisfies $(\partial_{t}-\triangle_{X})u(x,t)=0$, and $u(x,t)\geq 0$, the Bony's parabolic type strong maximum principle (see \cite{Bony1969} Theorem 3.2, also refer \cite{MB2014,Oleinik}) shows that $u(x,t)\equiv 0 $ for all $0<t\leq t'$ and all $x\in \Omega$. Now take a function $f\in C_{0}^{\infty}(\Omega)$ such that $f(y')\neq 0$, then we have $\lim_{t\to 0^+}\int_{\Omega}h_{D}(x,y',t)f(x)dx= f(y')$ and yet it is contradictory since $u(x,t)=h_{D}(x,y',t)\equiv 0$ for all $0<t\leq t'$ and all $x\in \Omega$. Hence we eventually obtain $h_{D}(x,y,t)>0$ for all $(x,y,t)\in \Omega\times\Omega\times(0,+\infty)$.\par
Let  $\Omega=\bigcup_{i=1}^{\infty}K_{i}$ with $K_{i}\subset K_{i+1}^{\circ}$. Here $K_{i}$ is compact set and $K_{i}^{\circ}$ the interior of $K_i$. Then we define a sequence of functions $f_i$ as
\[ f_{i}\in C_{0}^{\infty}(K_{i+1}^{\circ})\subset C_{0}^{\infty}(\Omega) ,~~ 0\leq f_{i}(x)\leq 1,~~\mbox{with } f_{i}(x)=\left\{
                                                   \begin{array}{ll}
                                                     1, & x\in\hbox{$K_{i}$}, \\[2mm]
                                                     0, & x\in\hbox{$(K_{i+1}^{\circ})^{c}$}.
                                                   \end{array}
                                                 \right.\]
It is easy to verify that $\lim_{i\to \infty}f_{i}(x)=\chi_{\Omega}(x)$, and $0\leq \chi_{K_{i}}(x)\leq f_{i}(x)\leq f_{i+1}(x)\leq \chi_{\Omega}(x)$. Using the weak maximum principle again, we obtain
\[  \int_{\Omega}f_{i}(y)h_{D}(x,y,t)dy\leq 1 \qquad\mbox{for all } i\in \mathbf{N}^{+}.\]
Then by Lebesgue's monotone convergence theorem, we have
\[\int_{\Omega}h_{D}(x,y,t)dy=\lim_{i\to\infty}\int_{\Omega}f_{i}(y)h_{D}(x,y,t)dy\leq 1. \]
Hence we complete the proof of \eqref{4-6}.\par Besides, if
$\overline{f}(x,t)$ is another solution of \eqref{4-8} with the same
initial condition $f_{0}$, then the weak maximum principle indicates
that the solution $f(x,t)-\overline{f}(x,t)$ of \eqref{4-8} must be
identically equal to $0$ since it vanishes on
$(\Omega\times\{0\})\cup (\partial\Omega\times [0,+\infty))$. This
leads to the uniqueness of $h_D(x,y,t)$.\par The arguments of all
above complete the proof of Proposition \ref{prop4-1}.
\end{proof}
\section{Diagonal asymptotic of sub-elliptic Dirichlet heat kernel}
 In this section, we  study the diagonal asymptotic behavior of sub-elliptic Dirichlet heat kernel of $\triangle_{X}$. First, by using the following proposition we can extend vector fields $X$ into whole space $\mathbb{R}^n$.
    \begin{proposition}
\label{prop5-1}
   Let $X=(X_{1},X_{2},\cdots,X_{m})$ be a system of $C^{\infty}$ vector fields defined in a bounded connected open set $W_{0}\subset \mathbb{R}^n$ and satisfying the condition (H) in $W_{0}$. Then, for any connected open sets $\Omega_{1}\subset\subset \Omega_{2}\subset \subset W_{0}$, there exists a new system of $C_{b}^{\infty}$ vector fields $X'=(Z_{1},Z_{2},\cdots,Z_{q})~(q=m+n)$, such that the vector fields $X'$ are defined in the whole space $\mathbb{R}^n$ and satisfy the  H\"{o}rmander's condition (H) in $\mathbb{R}^n$ (actually the vector fields $X'$ satisfy the uniform version of H\"{o}rmander's condition in $\mathbb{R}^n$, a detail proof will be given in Section 9,  Proposition \ref{prop9-1} below). Moreover
   \[ X'=\left\{
          \begin{array}{ll}
          (X_{1},X_{2},\cdots,X_{m},0,0,\cdots,0), & \hbox{in $\Omega_{1}$;} \\[2mm]
           (0,0,\cdots,0,\partial_{x_{1}},\partial_{x_{2}},\cdots,\partial_{x_{n}}) , & \hbox{in $\mathbb{R}^{n}\setminus \Omega_{2}$.}
          \end{array}
        \right.\]
Furthermore, denoting by $d_{X'},d$, respectively, the subunit metric induced by $X'$ in $\mathbb{R}^n$ and $X$ in $W_{0}$, then for any connected open set $\Omega\subset\subset\Omega_{1}$, $d_{X'}$ is equivalent to $d$ in $\Omega$, and $d_{X'}$ is equivalent to the Euclidean distance in $\mathbb{R}^{n}\setminus W_{0}$.
\end{proposition}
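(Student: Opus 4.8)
The plan is to obtain $X'$ by truncating the system $X$ near $\partial\Omega_{2}$ while simultaneously switching on the Euclidean frame $\partial_{x_{1}},\dots,\partial_{x_{n}}$. First I would fix a cut-off $\psi\in C^{\infty}(\mathbb{R}^{n})$ with $0\le\psi\le 1$, $\psi\equiv1$ on $\overline{\Omega_{1}}$, $\psi>0$ on $\Omega_{2}$ and $\psi\equiv0$ on $\mathbb{R}^{n}\setminus\Omega_{2}$ --- such a $\psi$ exists, e.g. $\psi=\chi+(1-\chi)f$ with $f\ge0$ smooth and vanishing precisely on $\mathbb{R}^{n}\setminus\Omega_{2}$ (rescaled so $f\le1$), and $\chi$ a standard bump equal to $1$ on $\overline{\Omega_{1}}$ with $\mathrm{supp}\,\chi\subset\Omega_{2}$ --- together with a standard cut-off $\theta\in C^{\infty}(\mathbb{R}^{n})$ satisfying $0\le\theta\le1$, $\theta\equiv0$ on $\overline{\Omega_{1}}$ and $\theta\equiv1$ on $\mathbb{R}^{n}\setminus\Omega_{2}$. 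Then I would set
\[
X'=(Z_{1},\dots,Z_{q}):=\bigl(\psi X_{1},\dots,\psi X_{m},\ \theta\,\partial_{x_{1}},\dots,\theta\,\partial_{x_{n}}\bigr),\qquad q=m+n.
\]
By construction $X'=(X_{1},\dots,X_{m},0,\dots,0)$ on $\Omega_{1}$ and $X'=(0,\dots,0,\partial_{x_{1}},\dots,\partial_{x_{n}})$ on $\mathbb{R}^{n}\setminus\Omega_{2}$; each $\psi X_{i}$ is smooth with compact support and each $\theta\,\partial_{x_{j}}$ is smooth and equals $\partial_{x_{j}}$ off the bounded set $\Omega_{2}$, so all the $Z_{k}$ lie in $C_{b}^{\infty}$.

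Next I would verify H\"ormander's condition for $X'$ at an arbitrary $x\in\mathbb{R}^{n}$. If $x\notin\Omega_{2}$, then $\theta\equiv1$ near $x$, so $Z_{m+1}(x),\dots,Z_{m+n}(x)$ are $\partial_{x_{1}},\dots,\partial_{x_{n}}$ and already span $T_{x}\mathbb{R}^{n}$ at step one. If $x\in\Omega_{2}$, then $\psi>0$ on a neighbourhood of $x$, and the key point is the identity
\[
(\psi X)_{I}=\psi^{|I|}X_{I}+\sum_{|J|<|I|}a_{J}X_{J},\qquad a_{J}\in C^{\infty},
\]
proved by a straightforward induction on $|I|$ from the Leibniz rule for Lie brackets; here $(\psi X)_{I}$ and $X_{I}$ denote the iterated commutators of length $I$ of $\psi X_{1},\dots,\psi X_{m}$ and of $X_{1},\dots,X_{m}$, respectively. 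Since $\psi(x)\neq0$, this identity can be inverted, so the commutators of $\psi X_{1},\dots,\psi X_{m}$ of length $\le r$ span, at $x$, the same subspace of $T_{x}\mathbb{R}^{n}$ as those of $X_{1},\dots,X_{m}$; because $X$ satisfies (H) on $W_{0}\supset\Omega_{2}$, so does $X'$ at $x$. Hence $X'$ satisfies (H) on all of $\mathbb{R}^{n}$ with H\"ormander index at most $\max\{1,Q_{0}\}$, where $Q_{0}$ is a uniform bound for the index of $X$ on the compact set $\overline{\Omega_{2}}$. (The quantitative, \emph{uniform} version of (H) for $X'$ is deferred to Proposition \ref{prop9-1}.)

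For the metric comparisons I would use one localisation remark: if $M:=1+\sum_{k=1}^{m}\sup_{\overline{\Omega_{2}}}|X_{k}|$, then $M$ bounds every $|Z_{k}|$, so any curve that is subunit for $X$ inside $\Omega_{1}$, or subunit for $X'$ in $\mathbb{R}^{n}$, with control $\le\delta$ satisfies $|\gamma'(t)|\le M\sqrt{q}\,\delta$, and therefore stays within Euclidean distance $M\sqrt{q}\,\delta$ of its starting point. Given $\Omega\subset\subset\Omega_{1}$, put $\rho_{0}:=\mathrm{dist}(\overline{\Omega},\partial\Omega_{1})>0$. If $x,y\in\Omega$ lie at distance $<\rho_{0}/(M\sqrt{q})$ in either metric, then every near-optimal subunit curve joining them stays in $\Omega_{1}$, where $X'=(X_{1},\dots,X_{m},0,\dots,0)$; since on $\Omega_{1}$ a curve is $X$-subunit with control $\le\delta$ iff it is $X'$-subunit with control $\le\delta$, this forces $d_{X'}(x,y)=d(x,y)$ for such $x,y$. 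On the complementary, compact set $S:=\{(x,y)\in\overline{\Omega}\times\overline{\Omega}:d(x,y)\ge\rho_{0}/(M\sqrt{q})\}$, both $d$ and $d_{X'}$ are continuous (a control distance of a system satisfying (H) is continuous; cf. \eqref{2-9}) and bounded above and below by positive constants, hence comparable; so $d_{X'}$ and $d$ are comparable throughout $\Omega$. The same scheme works on $\mathbb{R}^{n}\setminus W_{0}$: with $\rho_{1}:=\mathrm{dist}(\mathbb{R}^{n}\setminus W_{0},\overline{\Omega_{2}})>0$, short $X'$-subunit curves between nearby points of $\mathbb{R}^{n}\setminus W_{0}$ remain in $\mathbb{R}^{n}\setminus\overline{\Omega_{2}}$, where $X'=(0,\dots,0,\partial_{x_{1}},\dots,\partial_{x_{n}})$ and $d_{X'}$ reduces to the Euclidean distance; for larger separations one compares $d_{X'}$ with $|\cdot|$ using that $X'$ is exactly the Euclidean frame on the unbounded set $\mathbb{R}^{n}\setminus\Omega_{2}$, that $\overline{\Omega_{2}}$ has finite $d_{X'}$-diameter, and a compactness argument on the intervening region.

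I expect the construction of $\psi,\theta$ and the proof of (H) to be routine once the commutator identity above is in hand. The main obstacle is the metric part --- in particular, comparing $d_{X'}$ with the Euclidean metric \emph{globally} on the non-compact set $\mathbb{R}^{n}\setminus W_{0}$, for points separated by order-one or larger distance and possibly lying in distinct (including bounded) components of $\mathbb{R}^{n}\setminus W_{0}$. Resolving this requires the detour-around-$\overline{\Omega_{2}}$ estimate together with compactness, rather than a purely local argument; if the topology of $W_{0}$ is delicate, one restricts to the unbounded component or enlarges $\Omega_{2}$ suitably.
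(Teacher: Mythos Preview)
The paper does not prove this proposition at all: its entire ``proof'' is the single line ``See Theorem 2.9 in \cite{Brandolini}''. So there is no approach in the paper to compare yours against; you have written out what the authors chose to outsource to a reference.

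Your construction is the standard one and is essentially what appears in the Brandolini--Bramanti--Lanconelli--Uguzzoni memoir: multiply the $X_i$ by a cut-off supported in $\Omega_2$ and equal to $1$ on $\Omega_1$, adjoin the Euclidean frame multiplied by a complementary cut-off, and use the commutator identity $(\psi X)_I=\psi^{|I|}X_I+\text{(lower order)}$ to propagate (H) wherever $\psi>0$. The one refinement over the naive cut-off --- insisting that $\psi>0$ on all of $\Omega_2$, not merely on a neighbourhood of $\overline{\Omega_1}$ --- is exactly what is needed so that no gap in the H\"ormander condition opens up in the transitional annulus $\Omega_2\setminus\overline{\Omega_1}$; you have this right.

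The metric comparison is where your sketch is thinnest, and you are candid about it. Two points deserve more care. First, in showing $d=d_{X'}$ for close points of $\Omega\subset\subset\Omega_1$, you implicitly assume near-optimal curves for $d$ also stay in $\Omega_1$; but $d$ is the $X$-metric on all of $W_0$, so such curves may a priori exit $\Omega_1$. One fixes this the same way: the speed bound $|\gamma'|\le M\sqrt{m}\,\delta$ confines short $X$-curves too. Second, for the global comparison on $\mathbb{R}^n\setminus W_0$ you cannot simply invoke compactness, since the set is unbounded; the clean route is to observe $d_{X'}(x,y)\le|x-y|$ everywhere (Euclidean segments are $X'$-subunit off $\Omega_2$, and one detours around $\overline{\Omega_2}$ at bounded cost), while the reverse inequality $|x-y|\le M\sqrt{q}\,d_{X'}(x,y)$ is immediate from the global speed bound. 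With these two clarifications your argument goes through.
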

\begin{proof}
See Theorem 2.9 in \cite{Brandolini}.
\end{proof}

    Since $\overline{\Omega}$ is a compact subset of $W$, we can always find a bounded connected open set $W_{0}$ which has compact closure $\overline{W_{0}}$ such that $\overline{\Omega}\subset W_{0}\subset \overline{W_{0}}\subset W$. Also, there exists two connected open sets $\Omega_{1},\Omega_{2}$ such that
$\Omega\subset\subset\Omega_{1}\subset\subset\Omega_{2}\subset\subset W_{0}$.
Therefore, from Proposition \ref{prop5-1}, we get a system of $C_{b}^{\infty}$ vector fields $X'=(Z_{1},Z_{2},\cdots,Z_{q})$ which is an
extension of vector fields $X$ in $\mathbb{R}^n$ and satisfy the uniform H\"{o}rmander's condition. Let $\triangle_{X'}=-\sum_{j=1}^{q}Z_{j}^{*}Z_{j}$ be the sub-elliptic operator given by the vector fields $X'$, then $\triangle_{X'}=\triangle_{X}$ on $\Omega_{1}$ which is a neighborhood of $\overline{\Omega}$. For the sub-elliptic operator $\triangle_{X'}$ in $\mathbb{R}^{n}$, by the results in \cite{Brandolini,Kusuoka1988},
we know it has a global heat kernel $h(x,y,t)$ defined on $\mathbb{R}^n\times\mathbb{R}^n\times(0,+\infty)$ such that
\begin{equation}
\label{5-1}
  \left\{
    \begin{array}{ll}
   (\partial_{t}-\triangle_{X'})h(x,y,t)=0   , & \hbox{$\forall~(x,y,t)\in \mathbb{R}^n\times\mathbb{R}^n\times(0,+\infty)$;} \\[2mm]
     \lim\limits_{t\to 0^{+}}h(x,y,t)=\delta_{x}(y), &
    \end{array}
  \right.
\end{equation}
and also satisfies the following properties
\begin{equation}
\label{5-2}
\begin{aligned}
h(x,y,t)\geq 0~~ \mbox{for all}~~ (x,y,t)\in \mathbb{R}^{n}\times\mathbb{R}^{n}\times(0,+\infty),\\
\int_{\mathbb{R}^{n}}h(x,y,t)dy\leq 1~~ \mbox{for all}~~ (x,t)\in \mathbb{R}^{n}\times(0,+\infty).
\end{aligned}
\end{equation}
\begin{equation}
\label{5-3}
  h(x,y,t+s)=\int_{\mathbb{R}^{n}}h(x,z,t)h(z,y,s)dz,
\end{equation}
\begin{equation}
\label{5-4}
  h(x,y,t)=h(y,x,t).
\end{equation}
Meanwhile, the hypoellipticity of $\partial_{t}-\triangle_{X'}$ implies that $h(x,y,t)\in C^{\infty}(\mathbb{R}^n\times\mathbb{R}^n\times(0,+\infty))$.\par
 For the global heat kernel $h(x,y,t)$, we recall the asymptotic result constructed by Takanobu
 \cite{Takanobu1988}. Other similar results were also obtained by Ben Arous and L\'{e}andre \cite{Ben1989,Ben1990,Ben1991}.
\begin{proposition}
\label{prop5-2}
For the global heat kernel $h(x,y,t)$ of the sub-elliptic operator $\triangle_{X'}=-\sum_{j=1}^{q}Z_{j}^{*}Z_{j}$, there exists a sequence of real measurable functions $\{c_{i}(x)\}_{i=1}^{\infty}$ defined in $\mathbb{R}^n$, such that $h(x,x,t)$ has following asymptotic formula for $t\to 0^{+}$
\begin{equation}
\label{5-5}
  h(x,x,t)\sim t^{-\frac{\nu(x)}{2}}\left(\sum_{j=0}^{\infty}c_{j}(x)t^{j}\right),
\end{equation}
where $c_{0}(x)>0$ for all $x\in \mathbb{R}^n$, $\nu(x)$ is the pointwise homogeneous dimension at $x$.
\end{proposition}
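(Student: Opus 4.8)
This proposition is a recollection of Takanobu's theorem on the diagonal short-time asymptotics of heat kernels for operators of H\"ormander type \cite{Takanobu1988} (see also Ben Arous and L\'eandre \cite{Ben1989,Ben1990,Ben1991}), so the proof consists in applying that result to $\triangle_{X'}$; for completeness I sketch the mechanism. Write $\triangle_{X'}=\sum_{j=1}^{q}Z_{j}^{2}+Z_{0}$ with $Z_{0}=\sum_{j=1}^{q}(\operatorname{div}Z_{j})Z_{j}$, a first-order term lying pointwise in the span of $Z_{1},\dots,Z_{q}$. Since $Z_{1},\dots,Z_{q}$ satisfy a uniform H\"ormander condition on $\mathbb{R}^{n}$, $\triangle_{X'}$ generates a H\"ormander-type diffusion $\xi_{t}^{x}$, and one has the probabilistic representation $h(x,x,t)=\mathbb{E}\bigl[\delta_{x}(\xi_{t}^{x})\bigr]$ understood in the sense of Watanabe's generalized Wiener functionals.

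The plan is as follows. Fix $x$, pass to privileged coordinates centered at $x$ adapted to the flag $V_{1}(x)\subset\cdots\subset V_{Q}(x)=T_{x}(\mathbb{R}^{n})$, and let $\delta_{\varepsilon}$ be the associated anisotropic dilations, whose Jacobian is $\varepsilon^{\nu(x)}$ with $\nu(x)$ the pointwise homogeneous dimension appearing in \eqref{2-13}. Set $t=\varepsilon^{2}$ and rescale, $\eta_{s}^{\varepsilon}:=\delta_{1/\varepsilon}(\xi_{\varepsilon^{2}s}^{x})$ (in these coordinates); then $\eta^{\varepsilon}$ solves a stochastic differential equation whose coefficients expand in powers of $\varepsilon$, the $\varepsilon^{0}$-part being governed by the nilpotent approximation $\widehat{Z}_{j}^{x}$ of the $Z_{j}$ at $x$, while $Z_{0}$, having weight one, does not affect the $\varepsilon^{0}$-part and hence does not affect the leading coefficient. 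Using $\delta_{0}(\delta_{\varepsilon}(y))=\varepsilon^{-\nu(x)}\delta_{0}(y)$ in the $\delta$-pairing gives the exact identity $h(x,x,t)=t^{-\nu(x)/2}\,\mathbb{E}\bigl[\delta_{0}(\eta_{1}^{\varepsilon})\bigr]$ with $\varepsilon=\sqrt{t}$, which already extracts the power $t^{-\nu(x)/2}$.

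It remains to expand $\mathbb{E}[\delta_{0}(\eta_{1}^{\varepsilon})]$ as $\varepsilon\to 0^{+}$. The crucial step---which I expect to be the main obstacle---is a uniform non-degeneracy estimate for the Malliavin covariance matrix of $\eta_{1}^{\varepsilon}$: since the $Z_{j}$ bracket-generate $T_{x}(\mathbb{R}^{n})$ with commutators of length at most $Q$, a quantitative H\"ormander/Norris-type argument yields $L^{p}$-bounds, uniform for small $\varepsilon$, on the inverse Malliavin covariance of $\eta_{1}^{\varepsilon}$, so that $\eta_{1}^{\varepsilon}$ is uniformly non-degenerate in the Malliavin sense. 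This is where the finite-degeneracy hypothesis $Q<+\infty$ is used in full strength, and where the jump discontinuities of $\nu(\cdot)$ force the conclusion to be only pointwise in $x$ and the coefficients to be merely measurable. Granting this, a stochastic Taylor expansion of $\eta_{1}^{\varepsilon}$ in $\varepsilon$ combined with Watanabe's asymptotic expansion theorem for generalized functionals gives $\mathbb{E}[\delta_{0}(\eta_{1}^{\varepsilon})]\sim\sum_{j\ge 0}c_{j}(x)\varepsilon^{2j}$, which is \eqref{5-5} after setting $\varepsilon=\sqrt{t}$; all the data entering the $c_{j}$ depend measurably on $x$, hence so do the $c_{j}$.

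Finally, $c_{0}(x)=\widehat{h}_{x}(1,0,0)$ is the value at time $1$ at the identity of the heat kernel of the sub-Laplacian $\sum_{j}(\widehat{Z}_{j}^{x})^{2}$ on the osculating Carnot group $G_{x}$ of the structure at $x$. This operator is a left-invariant H\"ormander operator on $G_{x}$, so $\partial_{t}-\sum_{j}(\widehat{Z}_{j}^{x})^{2}$ is hypoelliptic and its heat semigroup is positivity improving by Bony's strong maximum principle \cite{Bony1969}; alternatively, the scaling identity $\widehat{h}_{x}(t,0,0)=t^{-\nu(x)/2}\widehat{h}_{x}(1,0,0)$ under $\delta_{\varepsilon}$, together with the fact that $\int_{G_{x}}\widehat{h}_{x}(t,0,y)\,dy$ does not vanish identically, forces $\widehat{h}_{x}(1,0,0)>0$. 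Hence $c_{0}(x)>0$ for all $x\in\mathbb{R}^{n}$, completing the sketch.
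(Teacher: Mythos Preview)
Your proposal is correct; the paper itself does not prove this proposition but simply cites Theorem~6.8 of Takanobu \cite{Takanobu1988} (together with the related references of Ben Arous and L\'eandre \cite{Ben1989,Ben1990,Ben1991}). Your sketch faithfully outlines the Malliavin-calculus/privileged-coordinates method underlying that reference, so you have in fact supplied more detail than the paper does.
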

\begin{proof}
See Theorem 6.8 in \cite{Takanobu1988}.
\end{proof}

   We also need the following Gaussian bounds of global heat kernel
    $h(x,y,t)$ which was proved by Kusuoka and Stroock in \cite{Kusuoka1987,Kusuoka1988} and
     was also generalized by Brandolini, Bramanti and Lanconelli et al \cite{Brandolini} to more general sub-elliptic operators. The similar results over compact manifolds was constructed by
     Jerison and S\'{a}nchez-Calle in \cite{Jerison1986}.
\begin{proposition}
\label{prop5-3}
For the global heat kernel $h(x,y,t)$ of the sub-elliptic operator $\triangle_{X'}=-\sum_{j=1}^{q}Z_{j}^{*}Z_{j}$,
there exist positive constants $A_{1},A_{2},B_{1},B_{2}$ such that for all $(x,y,t)\in \mathbb{R}^{n}\times\mathbb{R}^{n}\times (0,1]$, we have
\begin{equation}
\label{5-6}
  \frac{A_{1}}{|B_{d_{X'}}(x,\sqrt{t})|}\exp\left(\frac{-B_{1}d_{X'}^2(x,y)}{t} \right)\leq h(x,y,t)\leq   \frac{A_{2}}{|B_{d_{X'}}(x,\sqrt{t})|}\exp\left(\frac{-B_{2}d_{X'}^2(x,y)}{t} \right),
\end{equation}
 where $B_{d_{X'}}(x,r)=\{y\in \mathbb{R}^{n}|d_{X'}(x,y)<r\}$.
\end{proposition}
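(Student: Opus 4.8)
The plan is to derive the two-sided bound \eqref{5-6} from two metric-measure properties of $(\mathbb{R}^n,d_{X'},dx)$ and then invoke the now-standard equivalence, in the setting of strongly local Dirichlet forms, between Gaussian heat-kernel estimates, the scale-invariant parabolic Harnack inequality, and the conjunction of volume doubling with an $L^2$-Poincar\'{e} inequality. First I would record the structural setup: the form $\mathcal{E}(u,v)=\int_{\mathbb{R}^n}\sum_{j=1}^{q}Z_{j}u\cdot Z_{j}v\,dx$ is a regular, strongly local Dirichlet form with generator $\triangle_{X'}$, and --- because $X'$ satisfies the uniform H\"{o}rmander condition, hence the Chow connectivity property --- its intrinsic metric is exactly the control distance $d_{X'}$, which is finite, continuous, and induces the Euclidean topology. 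This makes Sturm's framework and the localized Saloff-Coste/Grigor'yan characterization applicable; the restriction $t\in(0,1]$ in \eqref{5-6} matches the length scale $r\le 1$ at which the two properties below are needed.

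Next I would verify \emph{volume doubling} at scales $r\le 1$, uniformly in $x$. By the Ball-Box Theorem (Proposition \ref{pro2-6}), $|B_{d_{X'}}(x,r)|$ is comparable to $\Lambda(x,r)=\sum_{I}|\lambda_{I}(x)|\,r^{d(I)}$, and since $\Lambda(x,2r)\le 2^{d_{\max}}\Lambda(x,r)$ with $d_{\max}$ the largest formal degree, the doubling constant is controlled by $d_{\max}$. The uniformity in $x\in\mathbb{R}^n$ is the delicate point: outside a compact set $X'=(\partial_{x_1},\dots,\partial_{x_n})$ and the estimates are Euclidean, while on the compact region the constants of Proposition \ref{pro2-6} are uniform thanks to the uniform version of H\"{o}rmander's condition satisfied by $X'$ (Proposition \ref{prop9-1}). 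I would then verify the \emph{$L^2$-Poincar\'{e} inequality} $\int_{B_{d_{X'}}(x,r)}|u-u_{B}|^2\,dy\le C_{P}\,r^2\int_{B_{d_{X'}}(x,r)}|X'u|^2\,dy$ for $r\le 1$ uniformly in $x$; this is Jerison's Poincar\'{e} inequality for H\"{o}rmander systems, whose uniformity at small scales again follows from the uniform H\"{o}rmander condition for $X'$ and the Euclidean behaviour near infinity.

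Granting doubling and Poincar\'{e} at scales $\le 1$, the equivalence theorem gives the scale-invariant parabolic Harnack inequality on cylinders of height $\le 1$, which in turn is equivalent to \eqref{5-6} for $t\in(0,1]$, with $A_1,A_2,B_1,B_2$ depending only on $n$, the doubling constant, and $C_P$. Concretely: the upper bound comes from the on-diagonal estimate $h(x,x,t)\le A_2/|B_{d_{X'}}(x,\sqrt t)|$ (a Faber--Krahn/Nash inequality, itself a consequence of doubling and Poincar\'{e}) combined with the Davies--Gaffney integrated estimate and Davies' exponential-perturbation method to extract the factor $\exp(-B_2 d_{X'}^2(x,y)/t)$; the lower bound comes from the on-diagonal lower estimate $h(x,x,t)\ge A_1/|B_{d_{X'}}(x,\sqrt t)|$ --- deduced from the upper bound, the semigroup identity \eqref{5-3} and $\int h\le 1$ in \eqref{5-2} --- propagated from $x$ to $y$ along a chain of roughly $d_{X'}^2(x,y)/t$ Harnack balls, each application of the Harnack inequality costing a fixed multiplicative factor and so producing $\exp(-B_1 d_{X'}^2(x,y)/t)$.

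The hard part will be the \emph{uniformity} of the doubling and Poincar\'{e} constants over all of $\mathbb{R}^n$: the local, non-uniform versions are classical, but the chaining and perturbation arguments genuinely need constants that do not degenerate, which is guaranteed only because the extension $X'$ from Proposition \ref{prop5-1} obeys a uniform H\"{o}rmander condition and coincides with the flat gradient near infinity --- precisely the content postponed to Proposition \ref{prop9-1}. A more self-contained route, bypassing the abstract equivalence, would be to run Davies' method directly from the weighted Sobolev inequality of Proposition \ref{pro2-1} (suitably localized and rescaled) for the upper bound and Moser iteration for the parabolic Harnack inequality feeding the lower-bound chain; there the subtlety is that the homogeneous dimension $\nu(x)$ is position-dependent, so every inequality must be phrased through $|B_{d_{X'}}(x,r)|$ rather than a fixed power of $r$ --- which is exactly why \eqref{5-6} carries the ball volume in the denominator.
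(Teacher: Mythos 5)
The paper does not actually prove this proposition: its ``proof'' is a citation to Kusuoka--Stroock and to Brandolini--Bramanti--Lanconelli et al., and the result is used as a black box. Your sketch is therefore not comparable to an argument in the paper, but it is a faithful reconstruction of the standard analytic route in the cited literature (especially \cite{Brandolini} and the Grigor'yan/Saloff-Coste/Sturm characterization): uniform volume doubling plus a uniform $L^2$-Poincar\'e inequality at scales $r\le 1$ yield the scale-invariant parabolic Harnack inequality and hence the two-sided Gaussian bound, with the upper bound via Faber--Krahn/Nash and Davies' perturbation and the lower bound via on-diagonal estimates chained through Harnack balls. (Kusuoka--Stroock's original proof is probabilistic, via Malliavin calculus, so your route is closer to \cite{Brandolini}.) You correctly isolate the real issue, namely uniformity of the constants over all of $\mathbb{R}^n$, which rests on Proposition \ref{prop9-1} and the Euclidean behaviour of $X'$ at infinity; note only that Proposition \ref{pro2-6} as stated is local (compact $K$, $r\le\delta_0(K)$), so the global doubling you invoke is the uniform variant, not a literal application of that proposition. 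One further step deserves care: your derivation of the on-diagonal lower bound from the upper bound, \eqref{5-3} and \eqref{5-2} needs $\int_{B_{d_{X'}}(x,C\sqrt t)}h(x,y,t)\,dy\ge \tfrac12$ (conservativeness, or at least a quantitative non-escape estimate obtained by integrating the Gaussian upper tail), since the sub-Markov bound $\int h\le 1$ alone does not close the Cauchy--Schwarz argument; for the uniformly H\"ormander, bounded-coefficient extension $X'$ this does hold, but it should be stated. With those two caveats your outline is sound and, at the level of detail appropriate for a result the paper itself only cites, complete.
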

\begin{proof}
See \cite{Kusuoka1987}, \cite{Kusuoka1988} and \cite{Brandolini}.
\end{proof}

Then, we have the following diagonal asymptotic result of sub-elliptic Dirichlet heat kernel.
\begin{proposition}
\label{prop5-4} Let $h_{D}(x,y,t)$ be the sub-elliptic Dirichlet
heat kernel of $\triangle_{X}$ on $\Omega$. Then there exists a
non-negative measurable function $\gamma_{0}$ on $\overline{\Omega}$
which satisfies $\gamma_{0}(x)>0$ for all $x\in \Omega$, such that
\begin{equation}
\label{5-7}
  \lim_{t\to 0^{+}}t^{\frac{\nu(x)}{2}}h_{D}(x,x,t)=\gamma_{0}(x)~~\mbox{for all}~~ x\in\overline{\Omega}.
\end{equation}
\end{proposition}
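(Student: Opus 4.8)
The plan is to transfer the short-time diagonal asymptotics of the \emph{global} heat kernel $h(x,y,t)$ of $\triangle_{X'}$ (Proposition \ref{prop5-2}) to $h_D$, exploiting the fact that, since $\triangle_{X'}=\triangle_X$ on a neighbourhood $\Omega_1$ of $\overline\Omega$, the Dirichlet kernel $h_D$ cannot ``feel'' the boundary at interior points and small times. First I would record the domination $h_D(x,y,t)\le h(x,y,t)$ on $\Omega\times\Omega\times(0,+\infty)$: for $f_0\in C_0^\infty(\Omega)$ with $f_0\ge 0$, the functions $u(x,t)=\int_\Omega h_D(x,y,t)f_0(y)\,dy$ and $v(x,t)=\int_{\mathbb R^n}h(x,y,t)f_0(y)\,dy$ are both solutions of the degenerate heat equation in $\Omega$ (for $v$ because $\triangle_{X'}=\triangle_X$ on $\Omega_1$), while $v-u\ge0$ on $\partial\Omega$ and $v-u\to0$ as $t\to0^+$; the weak maximum principle already used in Proposition \ref{prop4-1} then gives $u\le v$ in $\Omega$, and since $\mathrm{supp}\,f_0\subset\Omega$ this forces $h_D\le h$ after letting $f_0$ run through an approximate identity.

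The heart of the argument is the locality estimate
$$0\le h(x,y,t)-h_D(x,y,t)\le C_K\,t^{-N}e^{-c_K/t},\qquad x,y\in K,\ 0<t\le 1,$$
for every compact $K\subset\subset\Omega$, with $c_K$ proportional to $d_{X'}(K,\partial\Omega)^2$ and $N$ a fixed exponent. For fixed $y\in K$ the difference $w(x,t):=h(x,y,t)-h_D(x,y,t)$ is $\ge 0$ by the previous step, solves the degenerate heat equation in $\Omega$, equals $h(\zeta,y,t)$ on $\partial\Omega$, and extends continuously to $t=0$ with $w(\cdot,0)\equiv0$ on $\Omega$ because $h$ and $h_D$ have the same local parametrix near the diagonal as $t\to0^+$, so $w$ has no singularity at $x=y$. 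On the lateral boundary, the Gaussian bound \eqref{5-6}, a uniform lower bound $|B_{d_{X'}}(\zeta,\sqrt s)|\ge c\,s^{N}$ (e.g.\ from \eqref{2-9}), and $d_{X'}(\zeta,y)\ge d_{X'}(K,\partial\Omega)$ yield $\sup_{\zeta\in\partial\Omega}\sup_{0<s\le t}h(\zeta,y,s)\le C\,t^{-N}e^{-c_K/t}$ for $t$ small (the inner supremum being attained at $s=t$, since the exponential wins for small $s$). Applying the weak maximum principle to $w$ on $\Omega\times(0,t]$ then gives the asserted bound.

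Granting this, the conclusion is immediate. For $x\in\Omega$, pick $K$ a compact neighbourhood of $x$ and write $t^{\nu(x)/2}h_D(x,x,t)=t^{\nu(x)/2}h(x,x,t)-t^{\nu(x)/2}\bigl(h(x,x,t)-h_D(x,x,t)\bigr)$; Proposition \ref{prop5-2} gives $t^{\nu(x)/2}h(x,x,t)\to c_0(x)$, while the locality estimate makes the second term $o(1)$, so $\lim_{t\to0^+}t^{\nu(x)/2}h_D(x,x,t)=c_0(x)>0$, and I set $\gamma_0(x):=c_0(x)$ there. For $x\in\partial\Omega$, the expansion $h_D(x,x,t)=\sum_i e^{-\lambda_i t}\phi_i(x)^2$ and $\phi_i|_{\partial\Omega}=0$ give $h_D(x,x,t)\equiv0$, so $\gamma_0(x):=0$. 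Thus $\gamma_0=c_0\,\chi_\Omega$ is non-negative, measurable (measurability of $c_0$ is part of Proposition \ref{prop5-2}), and positive on $\Omega$, which is \eqref{5-7}.

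The main obstacle is precisely the locality estimate of the second paragraph, and within it the regularity of $h-h_D$ up to $t=0$ \emph{on the diagonal} $x=y$: one must know that the free and Dirichlet kernels share the same short-time singular expansion near $(x,x)$ so that their difference extends smoothly to $t=0$. This is the sub-elliptic form of Kac's ``principle of not feeling the boundary'' and relies on the local parametrix constructions underlying Propositions \ref{prop5-2} and \ref{prop5-3}. Once this regularity is in place, the Gaussian bounds and the maximum principle reduce the estimate to a routine computation.
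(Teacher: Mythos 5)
Your proposal is correct and follows essentially the same route as the paper: set $E=h-h_D$, show $E\ge 0$ and $E\to 0$ as $t\to 0^{+}$, bound $E$ on $\Omega$ by its lateral boundary values via the weak maximum principle together with the Gaussian upper bound of Proposition \ref{prop5-3} and the Fefferman--Phong inclusion \eqref{2-9}, obtaining an $O\bigl(t^{-N}e^{-c/t}\bigr)$ bound that is killed by $t^{\nu(x)/2}$, and then invoke Proposition \ref{prop5-2}. The only point where you are vaguer than the paper is the crux you yourself flag --- the vanishing of $h-h_D$ at $t=0$ on the diagonal: the paper gets this cleanly by extending $E$ by zero to $t\le 0$, checking it is a distributional solution of the degenerate heat equation on $\Omega\times\mathbb{R}$ (the delta singularities cancel), and appealing to hypoellipticity of $\partial_t-\triangle_X$, rather than to a comparison of local parametrices.
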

\begin{proof}
Let $X'$ be a global extension of $X$ in $\mathbb{R}^n$ and $h(x,y,t)$ be the corresponding global heat kernel in $\mathbb{R}^n$. Given
 \[ E(x,y,t):=\left\{
              \begin{array}{ll}
             h(x,y,t)-h_{D}(x,y,t), & \hbox{$t>0$;} \\[2mm]
                0, & \hbox{$t\leq 0$,}
              \end{array}
            \right.
\]
it follows from \eqref{4-3} and \eqref{5-1} that for any $\varphi\in
C_{0}^{\infty}(\Omega)$, we have
\[ \lim_{t\to 0^{+}}\int_{\Omega}E(x,y,t)\varphi(x)dx=0. \]
Similar to the arguments in the proof of \eqref{4-6}, we have that
\begin{equation}\label{5-8}
E(x,y,t)\geq 0~~\mbox{for all}~(x,y,t)\in \Omega\times \Omega\times(0,+\infty).
\end{equation}
Also, it is easy to show that for any fixed $y\in \Omega$, $u_{y}(x,t)=E(x,y,t)$ is locally integrable on $\Omega\times \mathbb{R}$ and $u_{y}(x,t)$ satisfies
\[ (\partial_{t}-\triangle_{X})u_{y}(x,t)=0~~ \mbox{for all}~~ (x,t)\in \Omega\times \mathbb{R} \]
in the sense of distribution. Then the hypoellipticity of
$\partial_{t}-\triangle_{X}$ implies that for any fixed $y\in
\Omega$, $u_{y}(x,t)=E(x,y,t)\in
C^{\infty}(\Omega\times\mathbb{R})$. Moreover, for any $x\in
\overline{\Omega}$ we have
\[ \lim_{t\to 0^{+}}E(x,y,t)=E(x,y,0)=0. \]
Now, for any fixed $y\in \Omega$, we know that $E(x,y,t)\in C^{\infty}(\Omega\times (0,+\infty))\cap C(\overline{\Omega
}\times [0,+\infty))$. Then, by using the weak maximum principle, Proposition \ref{prop5-3} and \eqref{2-9}, we have for sufficient small $t\ll 1$ and all $x\in \Omega$
\begin{align*}
 E(x,y,t)&\leq \max_{z\in \partial\Omega,0\leq s\leq t}E(z,y,s)\\
&\leq \max_{z\in \partial\Omega,0\leq s\leq t}\left[\frac{A_{2}}{|B_{d_{X'}}(y,\sqrt{s})|}\exp\left(-\frac{B_{2}d_{X'}^2(z,y)}{s} \right)\right]\\
&\leq \max_{z\in \partial\Omega,0\leq s\leq t} Cs^{-\frac{n}{2\epsilon_{0}}}\exp\left(-\frac{B_{2}d_{X'}^2(z,y)}{s} \right)\\
&\leq \max_{0\leq s\leq t}C\cdot s^{-\frac{n}{2\epsilon_{0}}}\exp\left(-\frac{B_{2}\text{dist}_{d_{X'}}^{2}(y,\partial\Omega)}{s} \right),
\end{align*}
where $C$ is a positive constant depends on $A_{2}$ and $\overline{\Omega}$, $\epsilon_{0}>0$ is the constant in Fefferman and Phong's estimate \eqref{2-9}, $\text{dist}_{d_{X'}}(y,\partial\Omega):=\inf\{d_{X'}(x,y)|x\in \partial\Omega\}$. Now, we define $C_{2}:=C_{2}(y)=B_{2}\text{dist}_{d_{X'}}^2(y,\partial\Omega)>0$. Observe the function $g(s)=s^{-\frac{n}{2\epsilon_{0}}}e^{-\frac{C_{2}}{s}}$ satisfying $\lim_{s\to 0^{+}}g(s)=\lim_{s\to +\infty}g(s)=0$, and $g'(s)=e^{-\frac{C_{2}}{s}}s^{-\frac{n}{2\epsilon_{0}}-2}\left(C_{2}-\frac{n}{2\epsilon_{0}}s \right)$. Therefore, $g(s)$ can only attain its maximum value at $s=\frac{2\epsilon_{0}C_{2}}{n}>0$. Then, for any fixed $y\in \Omega$ and sufficient small $0<t\ll 1$, we have
\[ 0\leq E(x,y,t)\leq C\cdot t^{-\frac{n}{2\epsilon_{0}}}\cdot \exp\left(-\frac{C_{2}(y)}{t}\right)~~\mbox{for all}~~x\in \Omega.  \]
In particular, taking $x=y\in \Omega$, we have
\begin{equation}\label{5-9}
  0\leq E(x,x,t)\leq C\cdot t^{-\frac{n}{2\epsilon_{0}}}\cdot \exp\left(-\frac{C_{2}(x)}{t}\right)~~\mbox{for sufficient small}~t>0.
\end{equation}
Consequently
\[ 0\leq \lim_{t\to 0^{+}}t^{\frac{\nu(x)}{2}}E(x,x,t)\leq \lim_{t\to 0^{+}}Ct^{\frac{\nu(x)}{2}-\frac{n}{2\epsilon_{0}}}\cdot \exp\left(-\frac{C_{2}(x)}{t}\right)=0. \]
Thus, by Proposition \ref{prop5-2}, there exists a measurable function $c_{0}(x)$ in $\mathbb{R}^n$ such that
\[ \lim_{t\to 0^{+}}t^{\frac{\nu(x)}{2}}h_{D}(x,x,t)=\lim_{t\to 0^{+}}t^{\frac{\nu(x)}{2}}h(x,x,t)=c_{0}(x)>0~~ \mbox{for all}~~ x\in\Omega.\]
\par
We then show that the value of function $c_{0}(x)$ at each point $x\in \Omega$ is independent of the extension of vector fields $X'$. If $\widetilde{X}$ is another global extension of $X$ in $\mathbb{R}^{n}$, by the same approach, we also have
\begin{equation}
\label{5-10}
  0\leq t^{\frac{\nu(x)}{2}}(\widetilde{h}(x,x,t)-h_{D}(x,x,t))\leq \widetilde{C}t^{\frac{\nu(x)}{2}-\frac{n}{2\widetilde{\epsilon_{0}}}}\cdot\exp\left(-\frac{\widetilde{C}_{2}(x)}{t} \right)~~\mbox{for sufficient small}~t>0,~x\in \Omega.
\end{equation}
Here $\widetilde{h}(x,y,t)$ is the global heat kernel corresponding
with vector fields $\widetilde{X}$, $\widetilde{\epsilon_{0}}$ is a
positive constant depends on $\widetilde{X}$.
$\widetilde{C}_{2}(x)=\widetilde{B}_{2}\text{dist}_{d_{\widetilde{X}}}^2(x,\partial\Omega)$
is a positive constant depends on $x$ and the subunit metric induced
by $\widetilde{X}$. It follows from \eqref{5-9} and \eqref{5-10}
 that for sufficient small $t$ and all $x\in \Omega$, we have
\[ t^{\frac{\nu(x)}{2}}|h(x,x,t)-\widetilde{h}(x,x,t)|\leq \left[Ct^{\frac{\nu(x)}{2}-\frac{n}{2\epsilon_{0}}}\cdot \exp\left(-\frac{C_{2}(x)}{t}\right)+ \widetilde{C}t^{\frac{\nu(x)}{2}-\frac{n}{2\widetilde{\epsilon_{0}}}}\cdot\exp\left(-\frac{\widetilde{C}_{2}(x)}{t} \right)  \right].\]
Thus
\[ \lim_{t\to 0^{+}}t^{\frac{\nu(x)}{2}}h(x,x,t)=\lim_{t\to 0^{+}}t^{\frac{\nu(x)}{2}}\widetilde{h}(x,x,t)\qquad \mbox{for all}~~ x\in \Omega.\]
That implies the value of function $c_{0}(x)$ at each point $x\in \Omega$ is independent of the way of extension.\par
 Finally, we take
\[ \gamma_{0}(x)=\left\{
                   \begin{array}{ll}
                    c_{0}(x) , & \hbox{$x\in \Omega$;} \\[2mm]
                     0, & \hbox{$x\in \partial\Omega$.}
                   \end{array}
                 \right.\]
Then we obtain
\[ \lim_{t\to 0^{+}}t^{\frac{\nu(x)}{2}}h_{D}(x,x,t)=\gamma_{0}(x) ~~\mbox{for all}~~ x\in \overline{\Omega}.\]

\end{proof}

\section{Proofs of Theorem \ref{thm1}, Theorem \ref{thm2} and Theorem \ref{thm3}}

\subsection{Proof of Theorem \ref{thm1}.}
\begin{proof}
By the semi-group property of $h_{D}(x,y,t)$ in \eqref{4-5}, we have
\[ h_D(x,y,t)=\int_{\Omega}h_D(x,z,s)h_D(z,y,t-s)dz,\qquad \mbox{for } 0<s< t. \]
Since $h_D(x,y,t)=h_D(y,x,t)$, then we obtain
\begin{equation}\label{6-1}
h_D(x,x,2t)=\int_{\Omega}(h_D(x,z,t))^2dz, \qquad  \mbox{for all } t>0.
\end{equation}
Moreover
\begin{equation}\label{6-2}
\begin{split}
\frac{\partial}{\partial t}\int_{\Omega}(h_D(x,z,t))^2dz&=2\int_{\Omega}h_D(x,z,t)\triangle_{X}^{z}h_D(x,z,t)dz\\
&=-2\int_{\Omega}|X_{z}h_D(x,z,t)|^2dz\\
&\leq-2C\left(\int_{\Omega}|h_{D}(x,z,t)|^{\frac{2\tilde{\nu}}{\tilde{\nu}-2}}dz\right)^
{\frac{\tilde{\nu}-2}{\tilde{\nu}}}.
\end{split}
\end{equation}
The last inequality applies the weighted Sobolev inequality (Proposition \ref{pro2-3})
, which is valid since for any fixed $(x,t)\in \Omega\times(0,+\infty)$, $h_D(x,y,t)\in H_{X,0}^{1}(\Omega)$ with respect to  $y$.
\par

Now, it follows from \eqref{4-6} that
\[ \int_{\Omega}h_D(x,z,t)dz=\int_{\Omega}|h_D(x,z,t)|dz\leq 1. \]
Then the H\"{o}lder's inequality yields
\begin{equation}\label{6-3}
\left(\int_{\Omega}|h_D(x,z,t)|^{\frac{2\tilde{\nu}}{\tilde{\nu}-2}}dz\right)^
{\frac{\tilde{\nu}-2}{\tilde{\nu}}}\geq \left(\int_{\Omega}h_D(x,z,t)^2dz \right)^{\frac{2+\tilde{\nu}}{\tilde{\nu}}}.
\end{equation}
Hence \eqref{6-1},\eqref{6-2} and \eqref{6-3} give
\begin{equation}\label{6-4}
\frac{\partial}{\partial t}h_D(x,x,2t)+2C\cdot h_D(x,x,2t)^{\frac{2+\tilde{\nu}}{\tilde{\nu}}}\leq 0.
\end{equation}
For any fixed $x\in \Omega$, take $f(t):=h_D(x,x,2t)$ with $t>0$. The positivity of $h_{D}(x,y,t)$ implies that $f(t)>0$. Then it follows from \eqref{5-7} and \eqref{6-4} that
\[ \lim_{t\to 0^{+}}f(t)=+\infty,~~\mbox{and}~~\frac{f'(t)}{f(t)^{1+\frac{2}{\tilde{\nu}}}}\leq -2C,~~\mbox{for all } t>0. \]
Let
\[ g(t):=-\frac{\tilde{\nu}}{2}(f(t))^{-\frac{2}{\tilde{\nu}}}.\]
Then
\begin{equation}\label{6-5}
g'(s)=\frac{f'(s)}{f(s)^{1+\frac{2}{\tilde{\nu}}}}\leq -2C,~~\mbox{for }  s>0.
\end{equation}
Now integrating $g'(s)$ on $(\varepsilon,t)$ for any $t>0$ and $0<\varepsilon<t$, we obtain from \eqref{6-5} that
\begin{equation}\label{6-6}
  g(t)-g(\varepsilon)=\int_{\varepsilon}^{t}g'(s)ds\leq -2C(t-\varepsilon).
\end{equation}
Since $\lim_{t\to 0^{+}}f(t)=+\infty$, we know that $\lim_{t\to 0^{+}}g(t)=0$.
Letting $\varepsilon\to 0^{+}$ in \eqref{6-6}, we get
\[ g(t)\leq -2Ct. \]
Consequently
\[  h_{D}(x,x,2t)=f(t)\leq \left(\frac{4C}{\tilde{\nu}}t \right)^{-\frac{\tilde{\nu}}{2}}, ~~\mbox{for all } t>0.\]
Hence, we conclude that
\begin{equation}\label{6-7}
\quad h_D(x,x,t)\leq \left(\frac{2C}{\tilde{\nu}}t \right)^{-\frac{\tilde{\nu}}{2}},~~\mbox{for all } t>0.
\end{equation}
The upper bound estimate \eqref{1-7} of sub-elliptic Dirichlet heat kernel is proved, where $C$ is the Sobolev constant in \eqref{2-4}. This completes the proof of Theorem \ref{thm1}.
\end{proof}
\subsection{Proof of Theorem \ref{thm2}.}
\begin{proof}
Proposition \ref{prop4-1} gives us the following:
\begin{equation}\label{6-8}
h_{D}(x,y,t)=\sum_{i=1}^{\infty}e^{-\lambda_{i}t}\phi_{i}(x)\phi_{i}(y), \mbox{ for all }t>0.
 \end{equation}
It follows from Theorem \ref{thm1} that
\begin{equation}\label{6-9}
 h_D(x,x,t)\leq \frac{C}{t^{\frac{\tilde{\nu}}{2}}},~~ \mbox{ for all } t>0.
\end{equation}
Then, combining \eqref{6-8} and \eqref{6-9}, we get
\begin{equation}
\label{6-10}
\sum_{i=1}^{k}e^{-\lambda_{i}t}\phi_{i}^2(x)\leq \frac{C}{t^{\frac{\tilde{\nu}}{2}}}~~ \mbox{ for any }k\geq 1 \mbox{ and } t>0.
\end{equation}
Integrating \eqref{6-10} with respect to $x$ on $\Omega$ and using the fact $\int_{\Omega}\phi_{i}^2(x)dx=1$, we obtain
\begin{equation}\label{6-11}
  \sum_{i=1}^{k}e^{-\lambda_{i}t}\leq \frac{C|\Omega|}{t^{\frac{\tilde{\nu}}{2}}}.
\end{equation}
Since $x\mapsto e^{-x}$ is a convex function, then  \eqref{6-11} implies that
\begin{equation}\label{6-12}
ke^{-\frac{t}{k}\sum_{i=1}^{k}\lambda_{i}}\leq \sum_{i=1}^{k}e^{-\lambda_{i}t}\leq \frac{C|\Omega|}{t^{\frac{\tilde{\nu}}{2}}}.
\end{equation}
Putting $t=\frac{k}{\sum_{i=1}^{k}\lambda_{i}}$ into \eqref{6-12}, then
\begin{equation}\label{6-13}
\sum_{i=1}^{k}\lambda_{k}\geq C_{1}\cdot k^{1+\frac{2}{\tilde{\nu}}} \quad \mbox{for any } k\geq 1.
\end{equation}
Here $C_{1}=(Ce|\Omega|)^{-\frac{2}{\tilde{\nu}}}$ is a positive constant depending on  $\Omega$ and $\tilde\nu$.\par

The proof of the Theorem \ref{thm2} is now complete.
\end{proof}
\subsection{Proof of Theorem \ref{thm3}.}
We use the following Tauberian theorem to prove Theorem \ref{thm3}.
\begin{proposition}[Tauberian theorem]
\label{prop6-1}
Suppose that $\{\lambda_{n}\}_{n\in\mathbb{N}}$ is a sequence of positive real numbers, and for every $t>0$ the
series
\begin{equation}\label{6-14}
\sum_{n=1}^{+\infty}e^{-\lambda_{n}t}<+\infty.
\end{equation}
Then for $r>0$ and $a\in \mathbb{R}$, the following two arguments are equivalent:
\begin{itemize}
  \item
   \begin{equation}\label{6-15}
  \lim_{t\to 0^+}t^{r}\sum_{n=1}^{+\infty}e^{-\lambda_{n}t}=a,
  \end{equation}
  \item
  \begin{equation}\label{6-16}
   \lim_{\lambda\to+\infty}\lambda^{-r}N(\lambda)=\frac{a}{\Gamma(r+1)},
   \end{equation}
\end{itemize}
where $N(\lambda)=\#\{n|~0<\lambda_{n}\leq\lambda\}$ for $\lambda>0$.
\end{proposition}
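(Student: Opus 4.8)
The plan is to reduce the statement to Karamata's classical Tauberian theorem. Set $N(\lambda)=\#\{n:0<\lambda_n\le\lambda\}$ and regard it as the right-continuous, nondecreasing step distribution function of the discrete measure $dN=\sum_n\delta_{\lambda_n}$, so that $\sum_n e^{-\lambda_n t}=\int_0^\infty e^{-t\lambda}\,dN(\lambda)=:\omega(t)$ is a Laplace--Stieltjes transform; hypothesis \eqref{6-14} ensures $\omega(t)<+\infty$ for $t>0$, that $\lambda_n\to+\infty$, and that $N(\lambda)<+\infty$ with $e^{-tR}N(R)\to0$ as $R\to+\infty$, so all boundary terms below are legitimate. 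Write $b:=a/\Gamma(r+1)$.

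For the Abelian direction \eqref{6-16}$\Rightarrow$\eqref{6-15}, assuming $\lambda^{-r}N(\lambda)\to b$, one integrates by parts (the endpoint terms vanish as just noted) and substitutes $u=t\lambda$ to obtain $\omega(t)=t\int_0^\infty e^{-t\lambda}N(\lambda)\,d\lambda=\int_0^\infty e^{-u}N(u/t)\,du$, hence
\[
t^{r}\omega(t)=\int_0^\infty e^{-u}u^{r}\,\frac{N(u/t)}{(u/t)^{r}}\,du .
\]
The integrand converges pointwise to $b\,e^{-u}u^{r}$, is dominated by a constant multiple of $e^{-u}u^{r}$ on the region where $u/t$ exceeds a fixed threshold $\mu_0$, and its contribution over the shrinking complementary region is bounded by $t^{r+1}N(\mu_0)\mu_0\to0$; dominated convergence then gives $t^{r}\omega(t)\to b\,\Gamma(r+1)=a$. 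This direction is routine.

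The substantive part is the Tauberian direction \eqref{6-15}$\Rightarrow$\eqref{6-16}. Assume $t^{r}\omega(t)\to a$. First, replacing $t$ by $(k+1)t$ in the hypothesis shows $t^{r}\int_0^\infty e^{-(k+1)t\lambda}\,dN(\lambda)\to a(k+1)^{-r}$ for every integer $k\ge0$. Introduce the positive measures $\rho_t$ on $[0,1]$ (with no mass at the origin) by $\int\phi\,d\rho_t:=t^{r}\int_0^\infty\phi(e^{-t\lambda})e^{-t\lambda}\,dN(\lambda)$; then $\int x^{k}\,d\rho_t\to a(k+1)^{-r}=\int_0^1 x^{k}\,d\sigma(x)$ for all $k\ge0$, where $d\sigma(x):=\tfrac{a}{\Gamma(r)}(-\log x)^{r-1}\,dx$ is a finite measure of total mass $a$ on $(0,1]$. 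In particular the masses $\rho_t([0,1])\to a$ are uniformly bounded for $t$ small, so by the Weierstrass approximation theorem this moment convergence upgrades to $\int\phi\,d\rho_t\to\int\phi\,d\sigma$ for every $\phi\in C([0,1])$. Finally one feeds in the bounded weight $\phi_0(x):=x^{-1}\mathbf{1}_{[e^{-1},1]}(x)$, for which a direct computation gives $\int\phi_0\,d\rho_t=t^{r}\int_{\{t\lambda\le1\}}dN(\lambda)=t^{r}N(1/t)$ and $\int\phi_0\,d\sigma=\tfrac{a}{\Gamma(r)}\int_0^1 u^{r-1}\,du=\tfrac{a}{\Gamma(r+1)}$; since $\phi_0$ is continuous except at $x=e^{-1}$ and $\sigma$ is absolutely continuous, squeezing $\phi_\varepsilon^{-}\le\phi_0\le\phi_\varepsilon^{+}$ between continuous functions agreeing with $\phi_0$ off an $\varepsilon$-neighbourhood of $e^{-1}$ and letting $t\to0^+$ then $\varepsilon\to0^+$ yields $t^{r}N(1/t)\to b$, i.e. \eqref{6-16} with $\lambda=1/t$.

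The expected obstacle is exactly this last step: transferring convergence of $\rho_t$ from the polynomial test functions $x^{k}$ to the discontinuous, indicator-type weight $\phi_0$. The device that makes it work is the uniform bound on the total masses $\rho_t([0,1])$, which turns moment convergence into weak convergence against $C([0,1])$ through Weierstrass, together with the absence of an atom of $\sigma$ at $x=e^{-1}$, which kills the error introduced by the approximation near the jump. Everything else is bookkeeping with dominated convergence and integration by parts.
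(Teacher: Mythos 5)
Your proof is correct. Note that the paper does not actually prove Proposition \ref{prop6-1}: it simply cites Theorem 1.1 of Arendt--Nittka--Peter \cite{Arendt2009}, so there is no internal argument to compare against; what you have supplied is a complete, self-contained proof along the classical Karamata lines. Both directions check out. The Abelian direction via integration by parts, the substitution $u=t\lambda$, and the split at $u=t\mu_0$ (with the tail controlled by the boundedness of $N(\mu)/\mu^{r}$ for $\mu\geq\mu_0$ and the head by $t^{r+1}N(\mu_0)\mu_0$) is standard and sound. In the Tauberian direction, the identification of the limit measure $d\sigma(x)=\tfrac{a}{\Gamma(r)}(-\log x)^{r-1}dx$ is verified by the substitution $x=e^{-s}$, which gives $\int_0^1x^{k}\,d\sigma=a(k+1)^{-r}$ as claimed; the upgrade from moment convergence to weak convergence against $C([0,1])$ correctly uses positivity of the measures $\rho_t$ and the uniform mass bound $\rho_t([0,1])=t^{r}\omega(t)\to a$; and the final passage to the discontinuous weight $\phi_0(x)=x^{-1}\mathbf{1}_{[e^{-1},1]}(x)$ is legitimate precisely because $\rho_t\geq0$ permits the squeeze $\phi_\varepsilon^{-}\leq\phi_0\leq\phi_\varepsilon^{+}$ and because $\sigma$ has no atom at $x=e^{-1}$. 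The computations $\int\phi_0\,d\rho_t=t^{r}N(1/t)$ and $\int\phi_0\,d\sigma=a/\Gamma(r+1)$ are both correct. (Two cosmetic remarks: since the $\lambda_n$ are positive, the hypothesis forces $a\geq0$, so $\sigma$ is indeed a positive measure even though the statement allows $a\in\mathbb{R}$; and monotonicity of $N$, which is the Tauberian condition making the converse direction possible, enters exactly where you invoke positivity of $\rho_t$.) What your approach buys over the paper's bare citation is a proof that is elementary and local to the paper; what the citation buys is brevity and access to the more general framework in \cite{Arendt2009}.
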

\begin{proof}
See Theorem 1.1 in \cite{Arendt2009}.
\end{proof}

\begin{proof}[Proof of Theorem \ref{thm3}]
From Proposition \ref{5-4}, we know that for the sub-elliptic Dirichlet heat kernel $h_{D}(x,y,t)$, there exists a non-negative function $\gamma_{0}$ on $\overline{\Omega}$ such that
\begin{equation}\label{6-17}
  \lim_{t\to 0^{+}}t^{\frac{\nu(x)}{2}}h_{D}(x,x,t)=\gamma_{0}(x)>0~~\mbox{for all}~~ x\in \Omega.
\end{equation}
Hence, \eqref{6-17} implies
\begin{equation}\label{6-18}
 \lim_{t\to 0^{+}}t^{\frac{\tilde{\nu}}{2}}h_{D}(x,x,t)=\lim_{t\to 0^{+}}t^{\frac{\tilde{\nu}-\nu(x)}{2}}\cdot \lim_{t\to 0^{+}}t^{\frac{\nu(x)}{2}}h_{D}(x,x,t)=\left\{
                                         \begin{array}{ll}
                                          \gamma_{0}(x) , & \hbox{$\nu(x)=\tilde{\nu}$;} \\[2mm]
                                           0, & \hbox{$\nu(x)<\tilde{\nu}$.}
                                         \end{array}
                                       \right.
\end{equation}
Let $H=\{x\in \Omega|\nu(x)=\tilde{\nu}\}$ and $\chi_{H}$ be the
characteristic function of $H$. We can derive from \eqref{6-18} that
\begin{equation}\label{6-19}
\lim_{t\to 0^{+}}t^{\frac{\tilde{\nu}}{2}}h_{D}(x,x,t)=\gamma_{0}(x)\cdot\chi_{H}(x),~~\mbox{for all}~~ x\in \overline{\Omega}.
\end{equation}
According to Theorem \ref{thm1}, we have
\begin{equation}\label{6-20}
0\leq  t^{\frac{\tilde{\nu}}{2}}h_{D}(x,x,t)\leq C,~~\mbox{for all}~~ x\in \overline{\Omega},~ t>0.
\end{equation}
Combining \eqref{6-19} and \eqref{6-20}, it follows from the
Lebesgue's dominant  convergence theorem that
 \begin{equation}\label{6-21}
  \lim_{t\to 0^{+}}t^{\frac{\tilde{\nu}}{2}}\int_{\Omega}h_D(x,x,t)dx=\int_{\Omega}\lim_{t\to
  0^{+}}t^{\frac{\tilde{\nu}}{2}}h_D(x,x,t)dx=\int_{H}\gamma_{0}(x)dx<+\infty.
\end{equation}
Here $\gamma_{0}(x)>0$ for any $x\in H=\{x\in \Omega|~\nu(x)=\tilde{\nu}\}$.\par
On the other hand, from Proposition \ref{prop4-1} we get
\begin{equation}\label{6-22}
\sum_{k=1}^{\infty}e^{-\lambda_{k}t}=\int_{\Omega}h_D(x,x,t)dx<+\infty,~~\mbox{for all}~~ t>0.
\end{equation}
It follows from \eqref{6-21} and \eqref{6-22} that
\begin{equation}\label{6-23}
\lim_{t\to 0^{+}}t^{\frac{\tilde{\nu}}{2}}\cdot \sum_{k=1}^{\infty}e^{-\lambda_{k}t}=\int_{H}\gamma_{0}(x)dx.
\end{equation}
Then, by using the Proposition \ref{prop6-1}, we obtain
\begin{equation}\label{6-24}
\lim_{\lambda\to+\infty}N(\lambda)\cdot \lambda^{-\frac{\tilde{\nu}}{2}}=\frac{1}{\Gamma\left(\frac{\tilde{\nu}}{2}+1 \right)}\int_{H}\gamma_{0}(x)dx,
\end{equation}
where $N(\lambda)=\#\{k|~0<\lambda_{k}\leq \lambda\}$.

Taking $\lambda=\lambda_{k}$, since $\lambda_{k}\to +\infty$ as $k\to +\infty$, then \eqref{6-24} implies $N(\lambda_{k})=k+o(\lambda_{k}^{\frac{\tilde{\nu}}{2}})$ as $k\to +\infty$. Hence, we can also deduce from \eqref{6-24} that
\begin{equation}\label{6-25}
  \lim_{k\to +\infty}k\cdot \lambda_{k}^{-\frac{\tilde{\nu}}{2}}=\frac{1}{\Gamma\left(\frac{\tilde{\nu}}{2}+1 \right)}\int_{H}\gamma_{0}(x)dx.
\end{equation}
This straightforward implies that
\begin{itemize}
  \item If $|H|>0$,
\begin{equation}
\label{6-26}
 \lambda_{k}=\left(\frac{\Gamma\left(\frac{\tilde{\nu}}{2}+1\right)}
 {\int_{H}\gamma_{0}(x)dx}\right)^{\frac{2}{\tilde{\nu}}}\cdot
 k^{\frac{2}{\tilde{\nu}}}+o(k^{\frac{2}{\tilde{\nu}}}),~~\mbox{ as } k\to +\infty.
\end{equation}
\item If $|H|=0$,
  \begin{equation}\label{6-27}
    \lim_{k\to+\infty}\frac{k^{\frac{2}{\tilde{\nu}}}}{\lambda_{k}}=0.
  \end{equation}
\end{itemize}
Theorem \ref{thm3} is proved.
\end{proof}

\section{Proofs of Theorem \ref{thm4} and Theorem \ref{thm5}}
\subsection{Proof of Theorem \ref{thm4}.}
We shall use the generalization of an  approach in \cite{Laptev1997}
to give the proof of Theorem \ref{thm4}. First, we prove the
following proposition.
\begin{proposition}\label{pro7-1}
If $X=(X_{1},X_{2},\ldots,X_{m})$  satisfy the assumptions in Theorem \ref{thm4}, then for any $\lambda>0$, we have
\begin{equation}\label{7-1}
 \sum_{k=1}^{\infty}(\lambda-\lambda_{k})_{+}\geq C(\lambda-\lambda_{1})_{+}^{1+\frac{n}{2}},
\end{equation}
where the constant $C>0$ is dependent on $X$ and $\Omega$, $\lambda_{k}$ is the $k^{th}$ Dirichlet eigenvalue of $-\triangle_{X}$ on $\Omega$, $(\lambda-\lambda_{k})_{+}=\lambda-\lambda_k$ if $\lambda>\lambda_k$ and $(\lambda-\lambda_{k})_{+}=0$ if $\lambda\leq\lambda_k$.
\end{proposition}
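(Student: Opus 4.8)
The plan is to prove \eqref{7-1} by an averaged variational principle, testing against a continuous family of ``coherent states'' built from the first Dirichlet eigenfunction, which generalizes the argument of \cite{Laptev1997}. The only abstract ingredient is the elementary inequality: if $A=-\triangle_{X}$ has $L^{2}$-orthonormal eigenfunctions $\{\phi_{k}\}$ with eigenvalues $\{\lambda_{k}\}$, then for every trace-class operator $\gamma$ on $L^{2}(\Omega)$ with $0\leq\gamma\leq I$ and range in the form domain $H_{X,0}^{1}(\Omega)$,
\[
\mathrm{Tr}\big((\lambda-A)\gamma\big)=\sum_{k=1}^{\infty}(\lambda-\lambda_{k})\langle\phi_{k},\gamma\phi_{k}\rangle\leq\sum_{k=1}^{\infty}(\lambda-\lambda_{k})_{+},
\]
because $0\leq\langle\phi_{k},\gamma\phi_{k}\rangle\leq1$ for each $k$.

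First I would normalize the first Dirichlet eigenfunction by $\|\phi_{1}\|_{\infty}=1$; recall that $\phi_{1}\in C^{\infty}(\overline{\Omega})\cap H_{X,0}^{1}(\Omega)$, $\phi_{1}|_{\partial\Omega}=0$, and $\|X\phi_{1}\|_{L^{2}(\Omega)}^{2}=\lambda_{1}\|\phi_{1}\|_{L^{2}(\Omega)}^{2}$. For $\xi\in\mathbb{R}^{n}$ set $u_{\xi}(x):=\phi_{1}(x)\,e^{ix\cdot\xi}\in H_{X,0}^{1}(\Omega)$ (multiplication by the smooth bounded factor $e^{ix\cdot\xi}$ preserves this space), and for a radius $R>0$ to be chosen later put $\gamma_{R}:=(2\pi)^{-n}\int_{|\xi|<R}|u_{\xi}\rangle\langle u_{\xi}|\,d\xi$. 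Using Plancherel's theorem in the $x$-variable I would verify $0\leq\gamma_{R}\leq I$: for $f\in L^{2}(\Omega)$ extended by zero,
\[
\langle f,\gamma_{R}f\rangle\leq(2\pi)^{-n}\int_{\mathbb{R}^{n}}|\langle f,u_{\xi}\rangle|^{2}\,d\xi=\|f\phi_{1}\|_{L^{2}(\mathbb{R}^{n})}^{2}\leq\|\phi_{1}\|_{\infty}^{2}\,\|f\|_{L^{2}(\Omega)}^{2}\leq\|f\|_{L^{2}(\Omega)}^{2},
\]
and $\gamma_{R}$ is trace class with $\mathrm{Tr}\,\gamma_{R}=(2\pi)^{-n}|B_{R}|\,\|\phi_{1}\|_{L^{2}(\Omega)}^{2}<\infty$; this step is exactly where the $L^{\infty}$-normalization of $\phi_{1}$ is needed.

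Next I would compute $\mathrm{Tr}\big((\lambda-A)\gamma_{R}\big)=(2\pi)^{-n}\int_{|\xi|<R}\big(\lambda\|u_{\xi}\|_{L^{2}(\Omega)}^{2}-\|Xu_{\xi}\|_{L^{2}(\Omega)}^{2}\big)\,d\xi$. Writing $X_{i}=\sum_{k}a_{ik}(x)\partial_{x_{k}}$ and $a_{i}(x)=(a_{i1}(x),\dots,a_{in}(x))$, one has $X_{i}u_{\xi}=e^{ix\cdot\xi}\big(X_{i}\phi_{1}+i\,\phi_{1}\,a_{i}(x)\cdot\xi\big)$; the cross term is purely imaginary, so
\[
\|Xu_{\xi}\|_{L^{2}(\Omega)}^{2}=\|X\phi_{1}\|_{L^{2}(\Omega)}^{2}+\int_{\Omega}\phi_{1}(x)^{2}\,\xi^{T}G(x)\xi\,dx\leq\big(\lambda_{1}+M|\xi|^{2}\big)\|\phi_{1}\|_{L^{2}(\Omega)}^{2},
\]
where $G(x)=\sum_{i=1}^{m}a_{i}(x)a_{i}(x)^{T}$ is continuous and $M:=\sup_{x\in\overline{\Omega}}\|G(x)\|<\infty$, while $\|u_{\xi}\|_{L^{2}(\Omega)}^{2}=\|\phi_{1}\|_{L^{2}(\Omega)}^{2}=:m$. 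Plugging these into the abstract inequality and, for $\lambda>\lambda_{1}$, choosing $R^{2}=(\lambda-\lambda_{1})/M$ so that the integrand is nonnegative on $\{|\xi|<R\}$, I obtain
\[
\sum_{k=1}^{\infty}(\lambda-\lambda_{k})_{+}\geq\frac{Mm}{(2\pi)^{n}}\int_{|\xi|<R}\big(R^{2}-|\xi|^{2}\big)\,d\xi=\frac{2m\,\omega_{n-1}}{n(n+2)\,(2\pi)^{n}\,M^{n/2}}\,(\lambda-\lambda_{1})^{1+\frac{n}{2}},
\]
with $\omega_{n-1}$ the area of the unit sphere in $\mathbb{R}^{n}$; when $\lambda\leq\lambda_{1}$ both sides vanish. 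This yields \eqref{7-1} with $C=2m\,\omega_{n-1}\big(n(n+2)(2\pi)^{n}M^{n/2}\big)^{-1}$, a constant depending only on $X$ and $\Omega$.

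The pointwise identity for $X_{i}u_{\xi}$ and the radial integral are routine; the points that need care are the construction of $\gamma_{R}$ with $0\leq\gamma_{R}\leq I$ (which is precisely why $\phi_{1}$ is normalized in $L^{\infty}$ rather than in $L^{2}$) and the trace-class/Fubini bookkeeping used to identify $\mathrm{Tr}((\lambda-A)\gamma_{R})$ with the $\xi$-integral --- for which one uses that $u_{\xi}$ lies in the form domain with $\|A^{1/2}u_{\xi}\|^{2}=\|Xu_{\xi}\|^{2}$, so $A^{1/2}\gamma_{R}A^{1/2}$ is trace class.
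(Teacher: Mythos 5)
Your proof is correct, and at its core it is the same argument as the paper's: both test the Riesz mean against the family of coherent states $\phi_{1}(x)e^{ix\cdot\xi}$, both reduce to the identical pointwise computation $\|X(\phi_{1}e^{ix\cdot\xi})\|_{L^{2}}^{2}\leq(\lambda_{1}+M|\xi|^{2})\|\phi_{1}\|_{L^{2}}^{2}$ (the cross term vanishing because $\phi_{1}$ is real), and both end with the same radial integral of $(\lambda-\lambda_{1}-M|\xi|^{2})_{+}$, yielding the same constant up to the choice of normalization ($\|\phi_{1}\|_{\infty}=1$ versus $\|\phi_{1}\|_{L^{2}}=1$, so your $m=\|\phi_{1}\|_{L^{2}}^{2}$ plays the role of the paper's $\beta^{-2}$). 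The only genuine difference is the abstract step that converts the family of test functions into a bound on $\sum_{k}(\lambda-\lambda_{k})_{+}$: the paper follows Laptev's original route, writing $\int_{\Omega}|\phi_{k}|^{2}\geq\beta^{-2}\int|\phi_{1}\phi_{k}|^{2}$, applying Plancherel, and then using Jensen's inequality for the convex function $(\lambda-t)_{+}$ against the spectral measure $d(E_{s}\theta_{\xi},\theta_{\xi})$ of each individual test function; you instead package the whole family into a single density matrix $\gamma_{R}$ with $0\leq\gamma_{R}\leq I$ and invoke the Berezin--Lieb/averaged variational inequality $\mathrm{Tr}((\lambda-A)\gamma_{R})\leq\sum_{k}(\lambda-\lambda_{k})_{+}$. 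The two mechanisms are well-known dual formulations of the coherent-state method; yours shifts the burden onto verifying $\gamma_{R}\leq I$ and the trace-class bookkeeping (which you handle correctly via Plancherel and $\|A^{1/2}u_{\xi}\|^{2}=\|Xu_{\xi}\|^{2}$), while the paper's shifts it onto the normalization $\|\theta_{\xi}\|_{L^{2}}=1$ needed for Jensen. Neither buys anything quantitative over the other here; both give the same constant $C$ depending on $\sup|\phi_{1}|$, $\|\phi_{1}\|_{L^{2}}$, and $M=\sup_{x\in\overline{\Omega}}\sum_{j}|X_{j}I(x)|^{2}$.
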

\begin{proof}[Proof of Proposition \ref{pro7-1}]
Let $\phi_{1},\phi_{2},\cdots$ be the orthonormal eigenfunctions of $-\triangle_{X}$ on $\Omega$ which corresponding to the Dirichlet eigenvalues $0<\lambda_{1}<\lambda_{2}\leq \cdots$. It is easy to verify that the functions
\[ \theta_{\xi}(x):=\phi_{1}(x)e^{-ix\cdot \xi},\quad \xi\in \mathbb{R}^n, \]
belong to the domain $D(\triangle_{X})$ of operator $\triangle_{X}$.
  Denote
  \begin{equation}\label{7-2}
    \beta:=\sup_{x\in\Omega}|\phi_{1}(x)|>0.
  \end{equation}
Then, if we let $\varphi_{\lambda}(t):=(\lambda-t)_{+}$, we have
\begin{align*}
\sum_{k=1}^{\infty}(\lambda-\lambda_{k})_{+}&=\sum_{k=1}^{\infty}\varphi_{\lambda}(\lambda_{k})\int_{\Omega}|\phi_{k}(x)|^2dx\\
&\geq \beta^{-2}\sum_{k=1}^{\infty}\varphi_{\lambda}(\lambda_{k})\int_{\Omega}|\phi_{1}(x)\phi_{k}(x)|^2dx\\
&=\beta^{-2}\sum_{k=1}^{\infty}\varphi_{\lambda}(\lambda_{k})\int_{\mathbb{R}^n}|\phi_{1}(x)\phi_{k}(x)|^2dx\\
&=\beta^{-2}(2\pi)^{-n}\sum_{k=1}^{\infty}\varphi_{\lambda}(\lambda_{k})\int_{\mathbb{R}^n}\left|\int_{\mathbb{R}^n}\phi_{k}(x)\theta_{\xi}(x)dx\right|^{2}d\xi.
\end{align*}
Let $E_{s}$ be the spectral projection of the self-adjoint operator
$-\triangle_{X}$. Then we obtain
\begin{align*}
\sum_{k=1}^{\infty}(\lambda-\lambda_{k})_{+}&\geq\beta^{-2}(2\pi)^{-n}\sum_{k=1}^{\infty}\varphi_{\lambda}(\lambda_{k})\int_{\mathbb{R}^n}\left|\int_{\mathbb{R}^n}\phi_{k}(x)\theta_{\xi}(x)dx\right|^{2}d\xi\\
&=\beta^{-2}(2\pi)^{-n}\int_{\mathbb{R}^{n}}\int_{0}^{+\infty}\varphi_{\lambda}(s)d(E_{s}\theta_{\xi},\theta_{\xi})d\xi.
\end{align*}
Clearly here we have
\[ \int_{0}^{+\infty}d(E_{s}\theta_{\xi},\theta_{\xi})=\|\theta_{\xi}\|_{L^2(\Omega)}^2=\|\phi_{1}\|_{L^2(\Omega)}^2=1.\]
Since $\varphi_{\lambda}(t)$ is a convex function, then we use the Jensen inequality to deduce
\begin{align*}
\sum_{k=1}^{\infty}(\lambda-\lambda_{k})_{+}&\geq   \beta^{-2}(2\pi)^{-n}\int_{\mathbb{R}^{n}}\int_{0}^{+\infty}\varphi_{\lambda}(s)d(E_{s}\theta_{\xi},\theta_{\xi})d\xi\\
&\geq \beta^{-2}(2\pi)^{-n}\int_{\mathbb{R}^{n}}\varphi_{\lambda}\left(\int_{0}^{+\infty}sd(E_{s}\theta_{\xi},\theta_{\xi})\right)d\xi.
\end{align*}
A simple calculation gives
\[ \int_{0}^{+\infty}sd(E_{s}\theta_{\xi},\theta_{\xi})=(-\triangle_{X}\theta_{\xi},\theta_{\xi})_{L^2(\Omega)}=\int_{\Omega}|X\theta_{\xi}(x)|^2dx.\]
On the other hand, for each $X_{j}=\sum_{k=1}^{n}a_{jk}(x)\partial_{x_{k}}$, we introduce a vector which corresponding to the differential operator $X_{j}$ by
\[ X_{j}I(x):=(a_{j1}(x),a_{j2}(x),\cdots,a_{jn}(x)).\]
 Then we can deduce that
\[X_{j}(\theta_{\xi}(x))=X_{j}(\phi_{1}(x)e^{-ix\cdot\xi})=e^{-ix\cdot\xi}[(X_{j}\phi_{1})-i\phi_{1}(x)\left \langle  X_{j}I(x), \xi \right \rangle_{\mathbb{R}^{n}}], \]
where   $\left \langle  X_{j}I(x), \xi \right \rangle_{\mathbb{R}^{n}}=\sum_{k=1}^{n}a_{jk}(x)\xi_{k}$ is the inner product of vector $X_{j}I(x)$ and $\xi$ in $\mathbb{R}^n$. Thus,
\[ |X_{j}(\theta_{\xi}(x))|^2=|X_{j}\phi_{1}|^2+\phi_{1}^2(x)\left \langle  X_{j}I(x), \xi \right \rangle_{\mathbb{R}^{n}}^2.     \]
Then, we have
\begin{align*}
\int_{\Omega}|X\theta_{\xi}(x)|^2dx&=\sum_{j=1}^{m}\int_{\Omega}|X_{j}\theta_{\xi}(x)|^2dx\\
&=\int_{\Omega}|X\phi_{1}|^2dx+\int_{\Omega}\phi_{1}^2(x)\sum_{j=1}^{m}\left \langle  X_{j}I(x), \xi \right \rangle_{\mathbb{R}^{n}}^2dx\\
&=\lambda_{1}+\int_{\Omega}\phi_{1}^2(x)\sum_{j=1}^{m}\left \langle  X_{j}I(x), \xi \right \rangle_{\mathbb{R}^{n}}^2dx\\
&\leq \lambda_{1}+\int_{\Omega}\phi_{1}^2(x)\left(\sum_{j=1}^{m}|X_{j}I(x)|^2\right)\cdot|\xi|^2dx
\end{align*}
Recall that $X=(X_{1},X_{2},\cdots,X_{m})$ are  $C^{\infty}$ vector
fields defined on the compact domain  $\overline{\Omega}$, then we
have
\[ \int_{\Omega}|X\theta_{\xi}(x)|^2dx\leq \lambda_{1}+M\int_{\Omega}\phi_{1}^2(x)|\xi|^2dx=\lambda_{1}+M|\xi|^2,\]
where $M=\sup_{x\in\Omega}\left(\sum_{j=1}^{m}|X_{j}I(x)|^2\right)<+\infty$. Observe that $\varphi_{\lambda}(t)$ is decrease with respect to $t$, hence we obtain
\begin{align*}
\sum_{k=1}^{\infty}(\lambda-\lambda_{k})_{+}&\geq \beta^{-2}(2\pi)^{-n}\int_{\mathbb{R}^{n}}\varphi_{\lambda}\left(\int_{0}^{+\infty}sd(E_{s}\theta_{\xi},\theta_{\xi})\right)d\xi\\
&=\beta^{-2}(2\pi)^{-n}\int_{\mathbb{R}^{n}}\varphi_{\lambda}\left(\int_{\Omega}|X\theta_{\xi}(x)|^2dx\right)d\xi\\
&\geq \beta^{-2}(2\pi)^{-n}\int_{\mathbb{R}^{n}}\varphi_{\lambda}(\lambda_{1}+M|\xi|^2)d\xi\\
&=\beta^{-2}(2\pi)^{-n}\int_{\mathbb{R}^{n}}(\lambda-\lambda_{1}-M|\xi|^2)_{+}d\xi\geq C(\lambda-\lambda_{1})_{+}^{1+\frac{n}{2}},
\end{align*}
where the positive constant $C$ depends on $X$ and $\Omega$. The proof of Proposition \ref{pro7-1} is complete.

\end{proof}

\begin{proof}[Proof of Theorem \ref{thm4}]
Now, we take $\lambda=\lambda_{k}$ in Proposition \ref{pro7-1}. Then
we get
\begin{equation}\label{7-3}
  \sum_{j=1}^{k-1}(\lambda_{k}-\lambda_{j})\geq C(\lambda_{k}-\lambda_{1})_{+}^{1+\frac{n}{2}}.
\end{equation}
For $k\geq 2$, we have $\lambda_{k}>\lambda_{1}$, this implies $\frac{\lambda_{k}-\lambda_{j}}{\lambda_{k}-\lambda_{1}}\leq 1$ for $j=1,2,\ldots,k-1$. Hence, we have
\[ k-1\geq  \sum_{j=1}^{k-1}\frac{\lambda_{k}-\lambda_{j}}{\lambda_{k}-\lambda_{1}}\geq C(\lambda_{k}-\lambda_{1})^{\frac{n}{2}}.\]
Consequently
\[ \lambda_{k}\leq \tilde{C}\cdot (k-1)^{\frac{2}{n}}+\lambda_{1}~~\mbox{for all}~~ k\geq 1.\]
The proof of Theorem \ref{thm4} is complete.
\end{proof}
\subsection{Proof of Theorem \ref{thm5}.}

Combining Proposition \ref{prop5-3} with \eqref{5-8}, we obtain that
for Dirichlet heat kernel $h_{D}(x,y,t)$ of sub-elliptic operator
$\triangle_{X}=-\sum_{i=1}^{m}X_{i}^{*}X_{i}$, there exists $C_{2}>0$
such that
\begin{equation}\label{7-4}
  h_{D}(x,y,t)\leq \frac{C_{2}}{|B_{d_{X'}}(x,\sqrt{t})|}e^{-\frac{d_{X'}(x,y)^2}{C_{2}t}}
\end{equation}
holds for all $t\in (0,1), x,y \in \overline{\Omega}$. Here $B_{d_{X'}}(x,r)$ is the subunit ball induced by the Carnot-Carath\'{e}odory metric $d_{X'}(x,y)$ which depends on the extension $X'$. In particular, we have
\begin{equation}\label{7-5}
h_{D}(x,x,t)\leq \frac{C_{2}}{|B_{d_{X'}}(x,\sqrt{t})|}~~\mbox{for all}~~ x\in \overline{\Omega}, ~0<t<1.
\end{equation}
Integrating \eqref{7-5} with respect $x$ on $\Omega$, we obtain
\begin{equation}\label{7-6}
\int_{\Omega}h_{D}(x,x,t)dx\leq C_{2}\int_{\Omega}\frac{1}{|B_{d_{X'}}(x,\sqrt{t})|}dx~~\mbox{for all}~~  0<t<1.
\end{equation}
Now, by using Proposition \ref{pro2-6}, since $\overline{\Omega}$ is
 a compact subset of $\mathbb{R}^n$, there exists
$\delta_{0}=\delta_{0}(\overline{\Omega})>0$ and constants
$C_{3},C_{4}>0$ such that
\begin{equation} \label{7-7}
C_{3}\Lambda(x,r)\leq |B_{d_{X'}}(x,r)|\leq C_{4}\Lambda(x,r)~~\mbox{for all}~~ x\in \overline{\Omega},~0<r<\delta_{0}.
\end{equation}
Take $\delta_{1}=\min\{1,\delta_{0}^2\}$, by \eqref{7-6} and \eqref{7-7}  we have for a constant $C_5>0$
\begin{equation} \label{7-8}
\int_{\Omega}h_{D}(x,x,t)dx\leq C_{5}\int_{\Omega}\frac{1}{\Lambda(x,\sqrt{t})}dx~~\mbox{for all}~~ 0<t<\delta_{1}.
\end{equation}
On the other hand, the formula \eqref{2-11} gives
\begin{equation} \label{7-9}
\Lambda(x,\sqrt{t})=\sum_{I}|\lambda_{I}(x)|t^{\frac{d(I)}{2}}\geq \sum_{d(I)=n}|\lambda_{I}(x)|t^{\frac{d(I)}{2}}=t^{\frac{n}{2}}\sum_{d(I)=n}|\lambda_{I}(x)|.
\end{equation}
If the vector fields $X=(X_{1},X_{2},\cdots,X_{m})$ satisfy the condition (A) on $\Omega$, then from \eqref{1-13} we have that
\begin{equation} \label{7-10}
\int_{\Omega}\frac{dx}{\sum_{d(I)=n}|\lambda_{I}(x)|} \leq \int_{\Omega}\frac{dx}{\sum|\det(Y_{i_{1}},Y_{i_{2}},\cdots,Y_{i_{n}})(x)|}<+\infty,
\end{equation}
 where the second sum in \eqref{7-10} is over all $n$-combinations $ (Y_{i_{1}},Y_{i_{2}},\cdots,Y_{i_{n}})$ of set $\{X_{j}|1\leq j\leq m\}$.
Combining \eqref{7-8}, \eqref{7-9} and \eqref{7-10}, we get
\begin{equation} \label{7-11}
\int_{\Omega}h_{D}(x,x,t)dx\leq C_{6}t^{-\frac{n}{2}}~~\mbox{for all}~~0<t<\delta_{1},
\end{equation}
where $C_{6}=C_{5}\int_{\Omega}\frac{1}{\sum|\det(Y_{i_{1}},Y_{i_{2}},\cdots,Y_{i_{n}})(x)|}<+\infty$. Recall that the Dirichlet heat kernel $h_{D}(x,y,t)$ can be expanded by the following series which converges uniformly in $\overline{\Omega}\times\overline{\Omega}\times [a,+\infty)$ for any $a>0$,
\begin{equation}
\label{7-12}
 h_{D}(x,y,t)=\sum_{i=1}^{\infty}e^{-\lambda_{i}t}\phi_{i}(x)\phi_{i}(y) .
\end{equation}
From the fact $\int_{\Omega}\phi_{j}^2(x)dx=1$, we have for any $k\in \mathbb{N}^{+}$,
\begin{equation}\label{7-13}
\sum_{j=1}^{k}e^{-\lambda_{j}t}=\int_{\Omega}\sum_{j=1}^{k}e^{-\lambda_{j}t}\phi_{j}^2(x)dx\leq \int_{\Omega}h_{D}(x,x,t)dx~~\mbox{for all}~t>0.
\end{equation}
Hence
\begin{equation}\label{7-14}
\sum_{j=1}^{k}e^{-\lambda_{j}t}\leq C_{6}t^{-\frac{n}{2}}~~\mbox{for all}~~ k\in \mathbb{N}^{+}, 0<t<\delta_{1}.
\end{equation}
Since $x\mapsto e^{-x}$ is a convex function, then from \eqref{7-14} we have
\[ ke^{-\frac{t}{k}\sum_{i=1}^{k}\lambda_{i}}\leq \sum_{j=1}^{k}e^{-\lambda_{j}t}\leq C_{6}t^{-\frac{n}{2}}~~\mbox{for all}~~ k\in \mathbb{N}^{+}, 0<t<\delta_{1}.\]
Since $\lambda_{j}\leq \lambda_{j+1}$, then if we take $t=\frac{\delta_{1}}{2}\cdot \frac{k\lambda_{1}}{\sum_{i=1}^{k}\lambda_{i}}\in (0,\delta_{1})$, we can obtain
\[ ke^{-\frac{\delta_{1}\lambda_{1}}{2}}\leq C_{6}\left(\frac{\delta_{1}}{2}\cdot\frac{k\lambda_{1}}{\sum_{i=1}^{k}\lambda_{i}} \right)^{-\frac{n}{2}}.
\]
Therefore, we can conclude that
\[ \sum_{i=1}^{k}\lambda_{i}\geq \frac{\delta_{1}\lambda_{1}}{2}\cdot (C_{6}e^{\frac{\delta_{1}\lambda_{1}}{2}})^{-\frac{2}{n}}\cdot k^{1+\frac{2}{n}}~~\mbox{for all}~k\geq 1.\]
The proof of Theorem \ref{thm5} is complete.

\section{Some Examples}
In this section, as applications of Theorem \ref{thm2}--Theorem \ref{thm5}, we give some examples.
\begin{ex}[Kohn Laplacian $\triangle_{\mathbb{H}}$]
Let $(\mathbb{H}_{n},\circ)$ be the Heisenberg group  in $\mathbb{R}^{2n+1}$. Here $\circ$ is the group operation on the Heisenberg group $\mathbb{H}_{n}$ defined as follows:\par
 Given the two points
\[ \xi=(x_{1},x_{2},\cdots,x_{n},y_{1},y_{2},\cdots,y_{n},z)=(x,y,z)\in \mathbb{H}_{n},\qquad x,y\in \mathbb{R}^n,~~ z\in \mathbb{R}\]
and
\[ \xi'=(x_{1}',x_{2}',\cdots,x_{n}',y_{1}',y_{2}',\cdots,y_{n}',z')=(x,y,z)\in \mathbb{H}_{n},\qquad x',y'\in \mathbb{R}^n,~~ z'\in \mathbb{R}.\]
Then
\[ \xi'\circ \xi:=(x'+x,y'+y,z'+z-2(x'\cdot y-x\cdot y')), \]
where the point $\cdot$ stands for the inner product in $\mathbb{R}^n$.

Consider the Kohn Laplacian on Heisenberg group $\mathbb{H}_{n}\subset \mathbb{R}^{2n+1}$,
 \[\triangle_{\mathbb{H}}:=\sum_{j=1}^{n}(X_{j}^2+Y_{j}^2), \]
which is induced by the vector fields $X_{j}=\partial_{x_{j}}+2y_{j}\partial_{z},Y_{j}=\partial_{y_{j}}-2x_{j}\partial_{z}$ for $j=1,2,\cdots, n$. In this case, we know the condition (H) and (M) are permissible in $\mathbb{R}^{2n+1}$, with H\"ormander index $Q=2$ and M\'{e}tivier index $\nu=2n+2$.\par
 Let $\Omega\subset \mathbb{R}^{2n+1}$ be a bounded connected open set with non-characteristic smooth boundary for vector fields $X=(X_{1},\cdots,X_{n},Y_{1},\cdots,Y_{n})$.
For the Dirichlet eigenvalue problem \eqref{1-1} on $\Omega$,  Hansson and Laptev \cite{AM1994} have proved that
\begin{equation}\label{8-1}
  \lambda_{k}\geq \left(\frac{(2\pi)^{n+1}(n+1)^{n+2}}{2C_{n}(n+2)^{n+1}|\Omega|} \right)^{\frac{1}{n+1}}\cdot k^{\frac{1}{n+1}}~~\mbox{  for all }k\geq 1,
\end{equation}
where $C_{n}=\sum_{j_{1},\cdots,j_{n}\geq 0}(2(j_{1}+\cdots+j_{n})+n)^{-(n+1)}$.\par

Now by our estimation in Theorem \ref{thm2}, we get
\begin{equation}\label{8-2}
\sum_{j=1}^{k}\lambda_{j}\geq C \cdot k^{1+\frac{1}{n+1}}~\mbox{ for all }k\geq 1,
\end{equation}
where $C$ is a positive constant related to $X$ and $\Omega$. Furthermore, we can get an explicit constant $C$ via the comparison of Dirichlet heat kernel and global heat kernel. From the results in \cite{YV1998}, we know that $\triangle_{\mathbb{H}}$ has a non-negative global heat kernel $h(x,y,t)$ such that
\[ h(\xi,0,t)=\frac{1}{2(4\pi t)^{n+1}}\int_{-\infty}^{+\infty}\left(\frac{\theta}{\sinh \theta} \right)^{n}\exp\left(-\frac{iz\theta+r^2\theta\coth\theta}{4t}  \right)d\theta, \]
where $\xi=(x_{1},\cdots,x_{n},y_{1},\cdots,y_{n},z)\in \mathbb{H}_{n}, r^2=\sum_{i=1}^{n}(x_{i}^2+y_{i}^2)$. Since the invariance of the operator $\triangle_{\mathbb{H}}$ with respect to left translations, we have
\[ h(\xi,\xi',t)=h(-\xi'\circ \xi,0,t). \]
Moreover, we have that
\[ h(\xi,\xi,t)=h(0,0,t)=\frac{\alpha_{n}}{(4\pi t)^{n+1}}, \]
where $\alpha_{n}=\int_{0}^{+\infty}\left(\frac{\theta}{\sinh \theta}\right)^{n} d\theta$. Since $h_{D}(x,y,t)\leq h(x,y,t)$, we obtain
\begin{equation}\label{8-3}
  h_{D}(x,x,t)\leq \frac{\alpha_{n}}{(4\pi t)^{n+1}}~~\mbox{for all}~~ t>0, x\in \Omega.
\end{equation}
Therefore, for any $t>0$ we can deduce from \eqref{8-3} that
\begin{equation}\label{8-4}
  k\cdot \exp\left(-\frac{t}{k}\sum_{j=1}^{k}\lambda_{j} \right)\leq \sum_{j=1}^{k}e^{-\lambda_{j}t}\leq \frac{\alpha_{n}}{(4\pi t)^{n+1}}|\Omega|.
\end{equation}
In order to get a sharp constant $C$, we take $t=s\cdot \frac{k}{\sum_{j=1}^{k}\lambda_{j}}$ in \eqref{8-4}, where $s>0$
is a constant to be determined later. Then, we have
\begin{equation}\label{8-5}
\sum_{j=1}^{k}\lambda_{j}\geq \frac{4\pi}{\alpha_{n}^{\frac{1}{n+1}}|\Omega|^{\frac{1}{n+1}}}\cdot \frac{s}{e^{\frac{s}{n+1}}}\cdot k^{1+\frac{1}{n+1}}.
\end{equation}
Now, we let $g(s)=\frac{s}{e^{\frac{s}{n+1}}}$. It is easy to show that $g(n+1)=\max\limits_{s\in (0,+\infty)}g(s)=(n+1)e^{-1}$. Hence, if we put $s=n+1$ in \eqref{8-5}, we can get a lower bound with an explicit coefficient
\begin{equation}\label{8-6}
\sum_{j=1}^{k}\lambda_{j}\geq \frac{4\pi}{\alpha_{n}^{\frac{1}{n+1}}|\Omega|^{\frac{1}{n+1}}}\cdot (n+1)e^{-1}\cdot k^{1+\frac{1}{n+1}},
\end{equation}
where $\alpha_{n}=\int_{0}^{+\infty}\left(\frac{\theta}{\sinh\theta} \right)^{n} d\theta$.
\end{ex}
  As we can see that, for the H\"{o}rmander vector fields $X$ with $|H|>0$, Theorem \ref{thm2} and Theorem \ref{thm3} give the optimal estimates of Dirichlet eigenvalues. Here we shall give an example below in which the M\'{e}tivier's condition (M) will be not satisfied on $\Omega$, but the set  $H=\{x\in \Omega~|~\nu(x)=\tilde{\nu}\}$ has a strict positive measure.

\begin{ex}
Let $\Omega\subset \mathbb{R}^3$ be a bounded connected open set with smooth boundary $\partial\Omega$ such that $D(2):=\{(x_{1},x_{2},x_{3})\in \mathbb{R}^3||x_{i}|<2, i=1,2,3\}\subset\subset \Omega$. Given the vector fields $X_{1},X_{2},X_{3}$ defined in $\mathbb{R}^3$ such that
\[ X_{1}=\frac{\partial}{\partial x_{1}}-\frac{1}{2}x_{2}\frac{\partial}{\partial x_{3}},~~ X_{2}=\frac{\partial}{\partial x_{2}}+\frac{1}{2}x_{1}\frac{\partial}{\partial x_{3}},\]
\[ X_{3}=(\phi_{1}(x_{1},x_{2})+\phi_{2}(x_{3})+\phi_{3}(x_{3}))\frac{\partial}{\partial x_{3}}, \]
where

     \[ \phi_{1}(x_{1},x_{2})=\left\{
                            \begin{array}{ll}
                          e^{-(\log(\sqrt{x_{1}^2+x_{2}^2}-\frac{3}{2}))^2}    , & \hbox{$\sqrt{x_{1}^2+x_{2}^2}>\frac{3}{2}$;} \\[2mm]
                             0, & \hbox{ $\sqrt{x_{1}^2+x_{2}^2}\leq\frac{3}{2}$.}
                            \end{array}
                          \right.\]

      \[ \phi_{2}(x_{3})=\left\{
                          \begin{array}{ll}
                            e^{-(\log x_{3})^2}, & \hbox{$x_{3}\in (0,+\infty)$;} \\[2mm]
                            0, & \hbox{$x_{3}\in (-\infty,0]$.}
                          \end{array}
                        \right.\]
 and
       \[ \phi_{3}(x_{3})=\left\{
                               \begin{array}{ll}
                             e^{-(\log(-x_{3}-1))^{2}}    , & \hbox{ $x_{3}\in (-\infty,-1)$;} \\[2mm]
                                 0, & \hbox{ $x_{3}\in [-1,+\infty)$.}
                               \end{array}
                             \right.\]
From above assumptions, we can see that the vector fields $X=(X_{1},X_{2},X_{3})$ verify the H\"{o}rmander's condition on $\overline{\Omega}$ with H\"{o}mander index $Q=2$. If we denote $H$ by the set
\[ H:=\left\{(x_{1},x_{2},x_{3})\in \mathbb{R}^3\bigg|\sqrt{x_{1}^2+x_{2}^2}\leq \frac{3}{2}, -1\leq x_{3}\leq 0\right\}. \]
We know that $ H\subset\subset D(2)\subset\subset \Omega $. Then we have
\[ \nu_{1}(x)=\dim V_{1}(x)=\left\{
                              \begin{array}{ll}
                                2, & \hbox{$x\in H$;} \\[2mm]
                                3, & \hbox{$x\in \overline{\Omega}\setminus H$,}
                              \end{array}
                            \right.\]
and
\[ \nu_{2}(x)=\dim V_{2}(x)=\left\{
                              \begin{array}{ll}
                                3, & \hbox{$x\in H$;} \\[2mm]
                                3, & \hbox{$x\in \overline{\Omega}\setminus H$,}
                              \end{array}
                            \right.\]
Therefore
\[ \nu(x)=\sum_{j=1}^{2}j(\nu_{j}(x)-\nu_{j-1}(x))=\left\{
            \begin{array}{ll}
              4, & \hbox{if $x\in H$ ;} \\[2mm]
              3, & \hbox{if $x\in \overline{\Omega}\setminus H$ .}
            \end{array}
          \right.\]
The vector fields $X=(X_{1},X_{2},X_{3})$ do not satisfy the M\'{e}tivier's condition (M), but has generalized M\'{e}tivier index $\tilde{\nu}$ on $\Omega$, namely
\[  \tilde{\nu}=\max_{x\in \overline{\Omega}}\nu(x)=4.\]
 For the set  $H=\left\{(x_{1},x_{2},x_{3})\in \mathbb{R}^3\bigg|\sqrt{x_{1}^2+x_{2}^2}\leq \frac{3}{2}, -1\leq x_{3}\leq 0\right\}=\{x\in \Omega|\nu(x)=\tilde{\nu}\}$, we know that $|H|>0$.  If we consider the Dirichlet eigenvalue problem \eqref{1-1} for the sub-elliptic operator $\triangle_{X}=-\sum_{j=1}^{3}X_{j}^{*}X_{j}$ on $\Omega$, according to the Theorem \ref{thm2}, we get a lower bound for $\lambda_{k}$
\begin{equation}\label{8-7}
  \sum_{j=1}^{k}\lambda_{k}\geq C\cdot k^{1+\frac{2}{\tilde{\nu}}}=C\cdot k^{\frac{3}{2}}.
\end{equation}
Thus Theorem \ref{thm3} tells us this lower bound is optimal in sense of the order $k$ and there exists $C_1>0$ which is dependent on the vector fields $X=(X_1, X_2, X_3)$ and $\Omega$, such that $\lambda_{k}\sim  C_1 k^{\frac{1}{2}}$ as $k\to +\infty$.

\end{ex}

In the following example, we shall have $|H|=0$ and the condition (A) is satisfied.

\begin{ex}
For $n\geq 3$, let $G=(X_{1},X_{2},\cdots,X_{n-1},Y_{1},Y_{2},\cdots,Y_{n-1})$ be the vector fields defined on an open connected set $W\subset\mathbb{R}^{n}$, the Grushin operator induced by $G$ is defined as follows:
 \begin{equation}\label{8-8}
 \left\{
       \begin{array}{ll}
       \triangle_G:=\sum\limits_{j=1}^{n-1}(X_j^2+Y_j^2),\\[4mm]
       X_j=\frac{\partial}{\partial x_j},~~Y_j=x_j\frac{\partial}{\partial x_{n}},~~1\leq j\leq n-1,
        \end{array}
       \right.
 \end{equation}
Assume $\Omega\subset\subset W$ to be a  bounded connected open subset of $W$ with smooth
 boundary $\partial\Omega$ which is non-characteristic for $G$. Also $\Omega$ satisfies that
 $\Omega\cap\{(0,0,\cdots,0,x_{n})|x_{n}\in \mathbb{R}\}\neq \varnothing $, and the generalized M\'{e}tivier index $\tilde{\nu}=n+1$.
 If we let $x'=(x_{1},\cdots,x_{n-1})$ then  $dx=dx'dx_{n}$ and $\Omega\subset \Omega_{x'}\times\Omega_{x_{n}}$. Here $\Omega_{x'},\Omega_{x_{n}}$ are the projections of $\Omega$ in $\{(x_{1},\cdots,x_{n-1},0)\}$ and $\{(0,\cdots,0,x_{n})\}$. Recall $\Omega\cap\{(\mathbf{0},x_{n})\in\mathbb{R}^{n}\}\neq \varnothing$, that implies  $\mathbf{0}\in \Omega_{x'}$. Since $\Omega_{x'}$ is an open set, there exists $\delta>0$ such that $B(\mathbf{0},\delta)\subset \Omega_{x'}$. By a direct calculation, we know that
\[ \sum|\det(Y_{i_{1}},Y_{i_{2}},\cdots,Y_{i_{n}})|=(|x_{1}|+\cdots+|x_{n-1}|),\]
where the sum is over all $n$-combinations $(Y_{i_{1}},Y_{i_{2}},\cdots,Y_{i_{n}})$ of the set $\{X_{j},Y_{j}|1\leq j\leq n-1\}$.
Observe that
\[ |x'|=\sqrt{x_{1}^2+\cdots+x_{n-1}^2}\leq |x_{1}|+\cdots+|x_{n-1}|. \]
Thus, in order to verify the assumption (A), it suffices to prove the convergence of  integral
$\int_{\Omega_{x'}}\frac{dx'}{|x'|}$. Indeed,
we can obtain that
\begin{equation}\label{8-9}
\int_{\Omega_{x'}}\frac{dx'}{|x'|}=\int_{B(\mathbf{0},\delta)}\frac{dx'}{|x'|}+\int_{\Omega_{x'}\setminus B(\mathbf{0},\delta)}\frac{dx'}{|x'|}.
\end{equation}
We know that the second part in \eqref{8-9} is finite. Then, for the first part in \eqref{8-9}, we have
\[ \int_{B(\mathbf{0},\delta)}\frac{dx'}{|x'|}=\omega_{n-1}\int_{0}^{\delta}r^{n-3}dr=\omega_{n-1}\frac{\delta^{n-2}}{n-2}<+\infty. \]
Hence, by Theorem \ref{thm5}, we have the following estimate for $\lambda_{k}$
\begin{equation}\label{8-10}
\sum_{j=1}^{k}\lambda_{j}\geq C\cdot k^{1+\frac{2}{n}}~~\mbox{for all}~~ k\geq 1,
\end{equation}
where $C>0$ is some constant which depends on the vector fields $G$ and $\Omega$.

From the upper bound estimate of $\lambda_{k}$ in Theorem \ref{thm4} and the lower bound estimate \eqref{8-10}, we know that $\lambda_{k}\approx k^{\frac{2}{n}}$ as $k\to +\infty$ in this example, which indeed improves the results for this Grushin type sub-elliptic operator in \cite{CCD} and \cite{chenluo}.

\end{ex}

Finally, we give an example for Grushin type vector fields, in which $|H|=0$ but the condition (A) is not satisfied. In this case, we can see the increase order of $k$ for $\lambda_{k}$  may smaller than $k^{\frac{2}{n}}$.

\begin{ex}
For $n=2$, let the Grushin vector fields
$X=(\partial_{x_{1}},x_{1}\partial_{x_{2}})$ defined on an open
connected set $W\subset \mathbb{R}^2$. The Grushin operator induced
by $X$ is defined as
\begin{equation}\label{8-11}
  \triangle_{X}:=\frac{\partial^{2}}{\partial x_{1}^2}+x_{1}^2\frac{\partial^{2}}{\partial x_{2}^2}.
\end{equation}
Assume $\Omega\subset\subset W$ to be a bounded connected open subset of $W$ which has smooth and non-characteristic  boundary for $X$. Meanwhile $\Omega$ satisfies that $\Omega\cap\{(0,x_{2})|x_{2}\in \mathbb{R}\}\neq \varnothing $. Thus, there exists a point $(0,y_{0})\in \Omega$. Since $\Omega$ is an open set, then we can find $\delta>0$ such that $B((0,y_{0}),\delta)\subset \Omega$. It is easy to get that
\[ \sum|\det(Y_{i_{1}},Y_{i_{2}})|=|x_{1}|, \]
where the sum is over all 2-combinations $(Y_{i_{1}},Y_{i_{2}})$ of set $\{\partial_{x_{1}},x_{1}\partial_{x_{2}}\}$.
Observe that
\begin{align*}
\int_{\Omega}\frac{dx_{1}dx_{2}}{|x_{1}|}&\geq \int_{B((0,y_{0}),\delta)}\frac{dx_{1}dx_{2}}{|x_{1}|}\\
&=\int_{0}^{\delta}dr\int_{0}^{2\pi}\frac{1}{|\cos\theta|}d\theta=4\delta\int_{0}^{\frac{\pi}{2}}\frac{1}{\sin\theta}d\theta=  +\infty.
\end{align*}
Therefore, the vector fields do not satisfy the condition (A). However, by calculating directly, we have
\[ \Lambda((x_{1},x_{2}),r)=2(|x_{1}|r^2+r^{3}).\]
From \eqref{7-8}, we obtain
\[ \int_{\Omega}h_{D}(x,x,t)dx\leq C_{5}\int_{\Omega}\frac{1}{\Lambda((x_{1},x_{2}),\sqrt{t})}dx_{1}dx_{2}=
C_{7}\int_{\Omega}\frac{1}{|x_{1}|t+t^{\frac{3}{2}}}dx_{1}dx_{2},~\mbox{for all}~ 0<t<\delta_{1},\]
where $C_{7}=\frac{1}{2}C_{5}>0$. Since $\Omega\subset [a,b]\times\Omega_{x_{2}}$ for some $a<0<b$, we can deduce that
\begin{align*}
\int_{\Omega}\frac{1}{|x_{1}|t+t^{\frac{3}{2}}}dx_{1}dx_{2}&\leq \int_{[a,b]\times\Omega_{x_{2}}}\frac{1}{|x_{1}|t+t^{\frac{3}{2}}}dx_{1}dx_{2}\\
&=|\Omega_{x_{2}}|\int_{a}^{b}\frac{1}{|x_{1}|t+t^{\frac{3}{2}}}dx_{1}\\
&=|\Omega_{x_{2}}|\left(\int_{a}^{0}\frac{1}{|x_{1}|t+t^{\frac{3}{2}}}dx_{1}+\int_{0}^{b}\frac{1}{|x_{1}|t+t^{\frac{3}{2}}}dx_{1} \right)\\
&=|\Omega_{x_{2}}|\left(\int_{0}^{-a}\frac{1}{x_{1}t+t^{\frac{3}{2}}}dx_{1}+\int_{0}^{b}\frac{1}{x_{1}t+t^{\frac{3}{2}}}dx_{1} \right)\\
&=\frac{|\Omega_{x_{2}}|}{t}\left(\log\left(1-\frac{a}{\sqrt{t}}\right)+\log\left(1+\frac{b}{\sqrt{t}}\right) \right).
\end{align*}
Therefore, we have
\[ ke^{-\lambda_{k}t}\leq \sum_{j=1}^{k}e^{-\lambda_{j}t}\leq \int_{\Omega}h_{D}(x,x,t)dx\leq C_{7}\frac{|\Omega_{x_{2}}|}{t}\left(\log\left(1-\frac{a}{\sqrt{t}}\right)+\log\left(1+\frac{b}{\sqrt{t}}\right) \right)\]
holds for some $0<t<\delta_{1}$. Observe that if we take $t=\frac{1}{\lambda_{k}}$, then there exists $j_{0}\in \mathbb{N}^+$ large enough, such that $t<\delta_{1}$ for $k\geq j_{0}$. Thus, we have
\[ ke^{-1}\leq C_{7}|\Omega_{x_{2}}|\lambda_{k}\cdot\left(\log(1-a\sqrt{\lambda_{k}})+\log(1+b\sqrt{\lambda_{k}})\right)~~\mbox{for all}~ k\geq j_{0}.\]
That means $\lambda_k\geq Ck(\log k)^{-1}>Ck^{\frac{2}{3}}$ for $k$ large enough. Here the generalized M\'{e}tivier index $\tilde{\nu}=3$.

\end{ex}

\section{A remark on  uniform the   H\"{o}rmander condition}
   In this part, we introduce the uniform version of H\"{o}rmander's condition which was defined in \cite{Kusuoka1987} and \cite{Kusuoka1988}.\par
   For the vector fields $X'=(Z_{1},Z_{2},\cdots,Z_{q})$ defined in $\mathbb{R}^n$, we denote $J=(j_{1},\cdots,j_{k})$ with $1\leq j_{i}\leq m$, $|J|=k$ is the length of $J$. Then the $k$ order commutator $Z_{J}$ is defined as
\[ Z_{J}=[Z_{j_{1}},[Z_{j_{2}},[Z_{j_{3}},\cdots,[Z_{j_{k-1}},Z_{j_{k}}]\cdots]]]. \]
   We say that the vector fields $X'=(Z_{1},Z_{2},\cdots,Z_{q})$ satisfy the uniform version of H\"{o}rmander's condition in $\mathbb{R}^n$ if there exists a positive integer $Q$ and a positive constant $\alpha$ such that
\begin{equation}\label{9-1}
  \inf_{\eta\in \mathbb{S}^{n-1}}\sum_{|J|\leq Q}\left \langle  Z_{J}I(x), \eta \right \rangle_{\mathbb{R}^{n}}^{2}\geq \alpha~~\mbox{for all}~~ x\in \mathbb{R}^n.
\end{equation}
Here $\left \langle  \cdot, \cdot \right \rangle_{\mathbb{R}^{n}}$ is the inner product in $\mathbb{R}^n$, $Z_{J}I(x)$ is the vector in $\mathbb{R}^n$ which corresponding to the differential operator $Z_{J}$.
\par
   Now, we assume that $X'=(Z_{1},Z_{2},\cdots,Z_{q})$ is an extension of $X=(X_{1},X_{2},\cdots,X_{m})$ in Proposition \ref{prop5-1} and satisfies the H\"{o}rmander's condition (H) in $\mathbb{R}^n$ with H\"{o}rmander's index $Q$. Moreover, we know that
\begin{equation}
\label{9-2}
 X'=\left\{
          \begin{array}{ll}
          (X_{1},X_{2},\cdots,X_{m},0,0,\cdots,0), & \hbox{in $\Omega_{1}$;} \\[2mm]
           (0,0,\cdots,0,\partial_{x_{1}},\partial_{x_{2}},\cdots,\partial_{x_{n}}) , & \hbox{in $\mathbb{R}^{n}\setminus \Omega_{2}$.}
          \end{array}
        \right.
\end{equation}
Then we have
\begin{proposition}
\label{prop9-1}
The vector fields $X'$ in \eqref{9-2} satisfies the uniform H\"{o}rmander's condition \eqref {9-1} in $\mathbb{R}^n$ for the positive integer $Q$ and some constant $\alpha>0$.
\end{proposition}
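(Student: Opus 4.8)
The plan is to reduce the uniform estimate \eqref{9-1} to a pointwise nondegeneracy statement together with a compactness argument, exploiting that outside the bounded set $\Omega_{2}$ the field $X'$ coincides with the Euclidean frame. For $x\in\mathbb{R}^{n}$ put
\[
F(x):=\inf_{\eta\in\mathbb{S}^{n-1}}\sum_{|J|\le Q}\left\langle Z_{J}I(x),\eta\right\rangle_{\mathbb{R}^{n}}^{2}.
\]
Since there are only finitely many multi-indices $J$ with $|J|\le Q$ and each commutator $Z_{J}$ has $C^{\infty}$ coefficients, the map $(x,\eta)\mapsto\sum_{|J|\le Q}\langle Z_{J}I(x),\eta\rangle_{\mathbb{R}^{n}}^{2}$ is continuous on $\mathbb{R}^{n}\times\mathbb{S}^{n-1}$; because $\mathbb{S}^{n-1}$ is compact, the infimum is attained and $F$ is continuous on $\mathbb{R}^{n}$. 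Moreover, for fixed $x$ the vectors $\{Z_{J}I(x):|J|\le Q\}$ span $\mathbb{R}^{n}$ if and only if no nonzero $\eta$ is orthogonal to all of them, i.e. if and only if $\sum_{|J|\le Q}\langle Z_{J}I(x),\eta\rangle_{\mathbb{R}^{n}}^{2}>0$ for every $\eta\in\mathbb{S}^{n-1}$, which by compactness of the sphere is equivalent to $F(x)>0$. Since $X'$ satisfies H\"ormander's condition (H) in $\mathbb{R}^{n}$ with H\"ormander index $Q$, the commutators of length at most $Q$ span $T_{x}(\mathbb{R}^{n})=\mathbb{R}^{n}$ at every $x$, so $F(x)>0$ for all $x\in\mathbb{R}^{n}$. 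The remaining task is to upgrade this pointwise positivity to a uniform lower bound, which is exactly where the non-compactness of $\mathbb{R}^{n}$ has to be dealt with.

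Next I would split $\mathbb{R}^{n}=\overline{\Omega_{2}}\cup(\mathbb{R}^{n}\setminus\Omega_{2})$ and treat the two pieces separately. On $\mathbb{R}^{n}\setminus\Omega_{2}$, by \eqref{9-2} the last $n$ components of $X'$ are $Z_{m+1}=\partial_{x_{1}},\dots,Z_{m+n}=\partial_{x_{n}}$, hence $Z_{m+k}I(x)=e_{k}$, the $k$-th standard basis vector of $\mathbb{R}^{n}$, for $k=1,\dots,n$. As these are commutators of length $1\le Q$, for every $\eta\in\mathbb{S}^{n-1}$ we get $\sum_{|J|\le Q}\langle Z_{J}I(x),\eta\rangle_{\mathbb{R}^{n}}^{2}\ge\sum_{k=1}^{n}\langle e_{k},\eta\rangle_{\mathbb{R}^{n}}^{2}=|\eta|^{2}=1$, so $F(x)\ge 1$ on $\mathbb{R}^{n}\setminus\Omega_{2}$.

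Finally, since $\Omega_{2}\subset\subset W_{0}$ and $W_{0}$ is bounded, $\overline{\Omega_{2}}$ is a compact subset of $\mathbb{R}^{n}$, so the continuous function $F$ attains a minimum $\alpha_{1}:=\min_{x\in\overline{\Omega_{2}}}F(x)$ on it, and $\alpha_{1}>0$ by the pointwise positivity established above. Putting $\alpha:=\min\{1,\alpha_{1}\}>0$ then yields $F(x)\ge\alpha$ for all $x\in\mathbb{R}^{n}$, which is precisely \eqref{9-1}, and the proposition follows. I do not expect a genuine obstacle here: the only steps demanding care are the joint continuity of $(x,\eta)\mapsto\sum_{|J|\le Q}\langle Z_{J}I(x),\eta\rangle_{\mathbb{R}^{n}}^{2}$ and the resulting continuity of $F$ after taking the infimum over the compact sphere, together with the elementary linear-algebra equivalence between ``the $Z_{J}I(x)$ span $\mathbb{R}^{n}$'' and ``$F(x)>0$''; once these are recorded, the conclusion is just the splitting of $\mathbb{R}^{n}$ into the Euclidean region near infinity, where an explicit constant is available, and the compact core $\overline{\Omega_{2}}$, where continuity and positivity suffice.
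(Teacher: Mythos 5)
Your proof is correct and follows essentially the same route as the paper: both split $\mathbb{R}^n$ into $\mathbb{R}^n\setminus\Omega_2$, where the Euclidean frame gives the explicit bound $1$, and the compact set $\overline{\Omega_2}$, where pointwise nondegeneracy from H\"ormander's condition is upgraded to a uniform bound by compactness. The only difference is presentational — you minimize the continuous function $F$ directly, while the paper argues by contradiction with a convergent subsequence $(x_{n_k},\eta_{n_k})\to(x_0,\eta_0)$ — and both versions are complete.
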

\begin{proof}
It is simple to see that for any $x\in \mathbb{R}^{n}\setminus \Omega_{2}$ and $\eta\in \mathbb{S}^{n-1}$, we have
\[ \sum_{|J|\leq Q}\left \langle  Z_{J}I(x), \eta \right \rangle_{\mathbb{R}^{n}}^2=|\eta|^2=1. \]
Therefore, it suffices to prove that
\begin{equation}\label{9-3}
  \inf_{\eta\in \mathbb{S}^{n-1}}\sum_{|J|\leq Q}\left \langle  Z_{J}I(x), \eta \right \rangle_{\mathbb{R}^{n}}^2\geq \alpha~~\mbox{for all}~~ x\in \overline{\Omega_{2}}
\end{equation}
holds for some $\alpha>0$. If the assertion would not hold, then for any $n\in \mathbb{N}$, there exists a sequence $\{x_{n}\}_{n=1}^{\infty}\subset \overline{\Omega_{2}}$ such that
\[ \inf_{\eta\in \mathbb{S}^{n-1}}\sum_{|J|\leq Q}\left \langle  Z_{J}I(x_n), \eta \right \rangle_{\mathbb{R}^{n}}^2<\frac{1}{n}.\]
Hence, we can find a sequence $\{\eta_{n}\}_{n=1}^{\infty}\subset \mathbb{S}^{n-1}$ such that
\begin{equation}\label{9-4}
  \sum_{|J|\leq Q}\left \langle Z_{J}I(x_{n}),\eta_{n}\right \rangle_{\mathbb{R}^n}^2<\frac{1}{n}~~\mbox{for all} ~~ n\geq 1.
\end{equation}
Since $(x_{n},\eta_{n})\in \overline{\Omega_{2}}\times \mathbb{S}^{n-1}$ and $\overline{\Omega_{2}}\times \mathbb{S}^{n-1}$ is a compact set, we can find a subsequence $(x_{n_{k}},\eta_{n_{k}})\to (x_{0},\eta_{0})\in \overline{\Omega_{2}}\times \mathbb{S}^{n-1}$ as $k\to +\infty$. Thus, we can deduce from \eqref{9-4} that
\begin{equation}\label{9-5}
  \sum_{|J|\leq Q}\left \langle Z_{J}I(x_{0}),\eta_{0} \right \rangle_{\mathbb{R}^n}^2=0.
\end{equation}
Now, let $Y_{j}=\sum_{k=1}^{n}a_{jk}(x)\partial_{x_{k}}(1\leq j\leq n)$ be arbitrary $n$ vector fields which are chosen from the set $\{Z_{J}||J|\leq Q\}$. It can be deduced from \eqref{9-5} that
\begin{equation}\label{9-6}
\sum_{j=1}^{n}\left\langle Y_{j}I(x_{0}),\eta_{0}\right\rangle_{\mathbb{R}^n}^2=0.
\end{equation}
Therefore, \eqref{9-6} implies $\det(Y_{1},Y_{2},\cdots,Y_{n})(x_{0})=0$, which means $Z_{1},Z_{2},\cdots,Z_{q}$ together with their commutators up to length $Q$ cannot span the tangent space $T_{x_{0}}(\mathbb{R}^{n})$ at the point $x_{0}$. This leads to a contradiction. Thus we have the conclusion of Proposition \ref{prop9-1}.
\end{proof}

\section*{Acknowledgments}
The first version of this paper was done when the first author visited 
the Max-Planck Institute for Mathematics in the Sciences, Leipzig during July-August 
of 2018 as a visiting professor. He would like to thank Professor J. Jost (Max-Planck Institute for Mathematics in the Sciences, Leipzig) for the invitation and  financial support.

\section*{References}

%% References
%%
%% Following citation commands can be used in the body text:
%% Usage of \cite is as follows:
%%   \cite{key}         ==>>  [#]
%%   \cite[chap. 2]{key} ==>> [#, chap. 2]
%%

%% References with bibTeX database:

%\bibliographystyle{elsarticle-num}
% \bibliographystyle{elsarticle-harv}
% \bibliographystyle{elsarticle-num-names}
% \bibliographystyle{model1a-num-names}
% \bibliographystyle{model1b-num-names}
% \bibliographystyle{model1c-num-names}
% \bibliographystyle{model1-num-names}
% \bibliographystyle{model2-names}
% \bibliographystyle{model3a-num-names}
% \bibliographystyle{model3-num-names}
% \bibliographystyle{model4-names}
% \bibliographystyle{model5-names}
% \bibliographystyle{model6-num-names}

%\bibliography{sample}

\end{document}